\documentclass[12pt]{amsart}
\usepackage{amssymb}
\vfuzz2pt \hfuzz2pt

\addtolength{\textwidth}{2.0cm} 
\addtolength{\oddsidemargin}{-1cm} 
\addtolength{\evensidemargin}{-1cm} 
\textheight=22.15truecm

\theoremstyle{definition}

\newcommand{\Z}{\mathbb{Z}}
\newcommand{\ol}{\overline}
\newcommand{\ul}{\underline}

\newtheorem{theorem}{Theorem}[section]
\newtheorem{lemma}[theorem]{Lemma}
\newtheorem{example}[theorem]{Example}
\newtheorem{definition}[theorem]{Definition}
\newtheorem{proposition}[theorem]{Proposition}

\newtheorem{corollary}[theorem]{Corollary}

\begin{document}
\title{Idempotents and structures of rings}

\author{P.N. \'Anh}
\address{R\'enyi Institute of Mathematics, Hungarian Academy of Sciences,
1364 Budapest, Pf.~127, Hungary}
\email{anh@renyi.hu}
\author{G.F. Birkenmeier}
\address{Department of Mathematics, University of Louisiana at Lafayette, Lafayette, LA 70504-1010, USA}
\email{gfb1127@louisiana.edu}
\author{L. van Wyk}
\address{Department of Mathematical Sciences, Stellenbosch University,
P/Bag X1, \hfill\break Matieland~7602, Stellenbosch, South Africa }
\email{LvW@sun.ac.za}
\thanks{Corresponding author: G.F. Birkenmeier}
\subjclass[2010]{16S50, 15A33, 16D20, 16D70}
\keywords{idempotent, generalized matrix ring, formal matrix ring, Morita Context, Peirce trivial, annihilator, bimodule, essential, ideal extending}

\begin{abstract} Recall that an $n$-by-$n$ generalized matrix ring is defined in terms of sets of rings $\{R_i\}_{i=1}^n, \ (R_i, R_j)$-bimodules $\{M_{ij}\}$, and bimodule homomorphisms $\theta_{ijk} : M_{ij} \otimes_{R_j} M_{jk} \rightarrow M_{ik}$, where the set of diagonal matrix units $\{E_{ii}\}$ form a complete set of orthogonal idempotents.  Moreover, an arbitrary ring with a complete set of orthogonal idempotents $\{e_i\}_{i=1}^n$ has a Peirce decomposition which can be arranged into an $n$-by-$n$ generalized matrix ring $R^\pi$ which is isomorphic to $R$.  In this paper, we focus on the subclass $\mathcal{T}_n$ of $n$-by-$n$ generalized matrix rings with $\theta_{iji} = 0$ for $i\ne j$.  $\mathcal{T}_n$ contains all upper and all lower generalized triangular matrix rings.  The triviality of the bimodule homomorphisms motivates the introduction of three new types of idempotents called the inner Peirce, outer Peirce, and Peirce trivial idempotents.  These idempotents are our main tools and are used to characterize $\mathcal{T}_n$ and define a new class of rings called the $n$-Peirce rings.  If $R$ is an $n$-Pierce ring, then there is a certain complete set of orthogonal idempotents $\{e_i\}_{i=1}^n$ such that $R^\pi\in\mathcal{T}_n$.  We show that every $n$-by-$n$ generalized matrix ring $R$ contains a subring $S$ which is maximal with respect to being in $\mathcal{T}_n$ and $S$ is essential in $R$ as an $(S, S)$-bisubmodule of $R$.  This allows for a useful transfer of information between $R$ and $S$.  Also, we show that any ring is either an $n$-Peirce ring or for each $k > 1$ there is a complete set of orthogonal idempotents $\{e_i\}_{i=1}^k$ such that $R^\pi\in\mathcal{T}_k$.  Examples are provided to illustrate and delimit our results.
\end{abstract}

\maketitle

\section*{Introduction}
\label{notions}
\smallskip
Throughout this paper all rings are associative with a unity and modules are unital unless explicitly indicated otherwise.

Given a complete set of orthogonal idempotents, $\{e_i\}_{i=1}^n$, of a ring $R$, we can form  a group direct sum,
$$R=e_1Re_1\oplus\cdots\oplus e_1Re_n\oplus e_2Re_1\oplus\cdots\oplus e_2Re_n \oplus \cdots \oplus e_nRe_1\oplus\cdots\oplus e_nRe_n,$$
called the Peirce decomposition of $R$. This decomposition can be arranged into an $n$-by-$n$ square array, called $R^\pi$, with

$$R^\pi= \left[ \begin{array}{ccccc}
e_1Re_1 & e_1Re_2 & \cdots & e_1Re_n\\
\\
e_2Re_1 & e_2Re_2 & \ddots & \vdots\\
\\
\vdots & \ddots & \ddots & e_{n-1}Re_n \\
\\
e_nRe_1 & \cdots & e_nRe_{n-1} &  e_nRe_n \end{array}\right].$$

\noindent The array $R^\pi$ forms a ring, where addition is componentwise and multiplication is the usual row-column matrix multiplication. Moreover, there is a ring isomorphism $h: R \rightarrow R^\pi$ defined by $h(x) =[e_i x e_j]$ for all $x\in R$. Observe that the $e_iRe_i$ are rings with unity and the $e_iRe_j$ are $(e_iRe_i, e_jRe_j)$-bimodules. Note that the bimodule product $e_iRe_j \cdot e_jRe_k$, arising in the row-column multiplication, may be thought of as a bimodule homomorphism $\theta_{ijk}: e_iRe_j \otimes_{e_jRe_j} e_jRe_k \to e_iRe_k$ determined by the multiplication of $R$.

The above discussion motivates the following well known definition:

\medskip

\centerline{an {\it $n$-by-$n$ generalized (or formal) matrix ring $R$} is a square array}

$$R = \left[ \begin{array}{cccc}
R_1 & M_{12} & \cdots & M_{1n}\\
\\
M_{21} & R_{2} & \ddots & \vdots \\
\\
\vdots & \ddots & \ddots & M_{n-1,n} \\
\\
M_{n1} & \cdots & M_{n,n-1} & R_n \end{array}\right]$$

\noindent where each $R_i$ is a ring, each $M_{ij}$ is an $(R_i, R_j)$-bimodule and there exist $(R_i, R_k)$-bimodule homomorphisms $\theta_{ijk} : M_{ij} \otimes_{R_j} M_{jk} \rightarrow M_{ik}$ for all $i, j, k = 1, \ldots, n$ (with $M_{ii} = R_i$). For $m_{ij} \in M_{ij}$ and $m_{jk} \in M_{jk}$, $m_{ij}m_{jk}$ denotes $\theta_{ijk}(m_{ij} \otimes m_{jk})$. The homomorphisms $\theta_{ijk}$ must satisfy the associativity relation: $(m_{ij}m_{jk}) m_{k\ell} = m_{ij}(m_{jk} m_{k\ell})$ for all $m_{ij} \in M_{ij},  \ m_{jk} \in M_{jk}, \ m_{k\ell} \in M_{k\ell}$ and all $i,j,k, \ell = 1, \ldots, n$.  Observe that $\theta_{iii}$ is determined by the ring multiplication in $R_i$, while $\theta_{ijj}$ and $\theta_{jjk}$ are determined by the bimodule scalar multiplications. Further information on generalized matrix rings can be found in [KT].

With these conditions, addition on~$R$ is componentwise and multiplication on~$R$ is row-column matrix multiplication. 
A Morita context is a $2$-by-$2$ generalized matrix ring. {\it An $n$-by-$n$ generalized upper (lower) triangular matrix ring} is a generalized matrix ring with $M_{ij}=0$ for $j< i$ ($M_{ij}=0$ for $i< j).$  Note that $\{E_{ii} \in R \  \vert \ E_{ii}$ is the matrix with $1 \in R_i$ in the $(i,i)$-position and $0$ elsewhere, $i = 1, \ldots, n\}$ is a complete set $\{E_{ii}\}_{i=1}^n$ of orthogonal idempotents in the above constructed generalized matrix ring $R$. 

The foregoing observations allow us to consider a generalized matrix ring in two ways:

\medskip

(1) given a ring $R$ and a complete set of orthogonal idempotents, $\{e_i\}_{i=1}^n$, then $R^\pi$ is an "internal" representation of $R$ as a generalized matrix ring in terms of substructures of $R$; whereas

(2) given collections $\{R_i\}, \ \{M_{ij}\},$ and $\{\theta_{ijk}\}$, we construct a new ring from these "external" components via the  generalized matrix ring notion.

\medskip

An important problem in the study of  generalized matrix rings is: given a collection of rings $\{R_i \ \vert \ i=1, \ldots,n\}$ and bimodules $\{M_{ij} \ \vert \ i,j=1, \ldots,n, \ i\ne j,$ and each $M_{ij}$ is an $(R_i,R_j)$-bimodule$\}$ determine the $\theta_{iji} \ (i\ne j)$ and the $\theta_{ijk}$ ($i,j,k$ distinct) to produce an $n$-by-$n$ generalized matrix ring. We can simplify this problem by trivializing the $\theta_{ijk}$ in the following three ways (note that for $n=2$, all three conditions coincide):

\medskip

(I) Define $\theta_{iji}=0$, for  all $i\ne j$.

(II) Define $\theta_{ijk}=0$, for  all $i,j,k$ pairwise distinct.

(III) Define $\theta_{ijk}=0$, for  all $i\ne j$ and $j\ne k$ (I and II combined).

\medskip

Two questions immediately arise:

\medskip

(A) Are there significant examples of generalized matrix rings with trivialized~$\theta_{ijk}$?

(B) How can the theory of generalized matrix rings with trivialized $\theta_{ijk}$ be used to gain insight into the theory of arbitrary generalized matrix rings?

\medskip

In this paper, we consider the class of $n$-by-$n$ ($n>1$) generalized matrix rings satisfying condition (I) (i.e., $\theta_{iji}=0$, for  all $i\ne j$). 

\medskip

\centerline{{\it We denote this class of rings by $\mathcal{T}_n$.}}

\medskip

For each generalized matrix ring~$R$, 

\medskip

\centerline{{\it we use $\ol{R}$ to denote the ring in $\mathcal{T}_n$ which has the same corresponding $R_i, M_{ij}, \theta_{ijk}$}} 

\centerline{{\it as $R$, except that for all $i \neq j$ the homomorphisms~$\theta_{iji}$ are taken to be $0$ in $\ol{R}$.}}

\medskip

Thus $R$ and $\ol{R}$ are the same ring if and only if $R \in \mathcal{T}_n$. Note that the classes of $n$-by-$n$ generalized upper and lower triangular  matrix rings form significant proper subclasses of $\mathcal{T}_n$ (see Question~A). Further examples are provided throughout this paper.

Observe that the triviality of the $\theta_{iji}$ motivates three new types of idempotents which appear in the internal (Peirce decomposition) generalized matrix ring representation of a ring in $\mathcal{T}_2$. For $e=e^2\in R$, 

\medskip

(1) $e$ is {\it inner Peirce trivial} if $eR(1-e)Re=0$;

(2) $e$ is {\it outer Peirce  trivial} if $(1-e)ReR(1-e) =0$;

(3) $e$ is  {\it Peirce trivial} if $e$ is both inner and outer trivial. 

\medskip

In [P] B. Peirce introduced the concept of an idempotent, and so we are naming certain idempotents and rings in this paper in his honor.  These idempotents provide the main tools in our investigations; in particular, they are used to characterize the class $\mathcal{T}_n$ and the class of $n$-Peirce rings.

In Section 1, we develop the basic properties of the inner (outer) Peirce trivial idempotents. Moreover we show that if $R$ is a subring of a ring $T$ and $S$ is the subring of $T$ generated by $R$ and a subset $\mathcal{E}$ of inner or outer Peirce trivial idempotents of $T$, then there is a useful transfer of information between $R$ and~$S$, e.g., $R$ is strongly $\pi$-regular or has classical Krull dimension $n$ if and only if so does $S$ (Theorems~1.13, 1.14 and 1.16).

In Section 2, we begin by showing that, for a ring $R$ with a complete set $\{e_i \}_{i=1}^n$ of orthogonal idempotents, $R^\pi \in \mathcal{T}_n$ if and only if each $e_i$ is inner Peirce trivial~(Theorem~2.2). 

Next, let $R$ be a generalized $n$-by-$n$ matrix ring and take 

$$\mathcal{D}(R) = \{[r_{ij}] \in R \ \vert \ r_{ij} =0 \ {\rm for\ all\ }i \neq j\}$$ 

and 

$$\mathcal{D}(R)^- = \{[r_{ij}] \in R \ \vert \ r_{ii} =0\ {\rm for \ all\ }i =1, \ldots, n\}.$$

\noindent We obtain that if $R \in \mathcal{T}_n$ then $\mathcal{D}(R)^- \trianglelefteq R$ such that $(\mathcal{D}(R)^-)^n=0$ and $\mathcal{D}(R)$ is ring isomorphic to $R/(\mathcal{D}(R)^-)$ (Proposition 2.4). 

The transfer of various ring properties (e.g., semilocal, bounded index, having a polynomial identity) between~$R$ and $\mathcal{D}(R)$ is considered when $R \in \mathcal{T}_n$. In~Theorem~2.12 (one of the main results of the paper) we show that every $n$-by-$n$ generalized matrix ring has  subrings $S$ maximal with respect to being in $\mathcal{T}_n$ such that $S$ is essential in $R$ as an $(S,S)$-bimodule. This fact allows for a two-step transfer of information from $\mathcal{D}(R)$ to~$S$ (Theorem 1.16) and from $S$ to $R$ (Theorem~2.12 and Corollary 2.14). 

In the remainder of this section, we introduce the notion of an ideal extending ring and use Theorem 2.12 and its consequences to show how this notion passes from a ring $A$ to certain generalized matrix rings which are overrings of the $n$-by-$n$ upper triangular matrix ring over $A$. Thus Theorem~2.12 and its corollaries provide answers to Question B. 

The $n$-Peirce rings are introduced and investigated in Section 3. A ring $R$ is a 

\medskip 

\centerline{{\it 1-Peirce ring}} 

\medskip

\noindent if $0$ and $1$ are the only Peirce trivial idempotents in $R$. Inductively, for a natural number $n > 1$, we say a ring $R$ is an 

\medskip

\centerline{{\it $n$-Peirce ring}} 

\medskip 

\noindent if there is a Peirce trivial idempotent $e$ such that $eRe$ is an $m$-Peirce ring for some $1 \leq m < n$ and $(1-e)R(1-e)$ is an $(n-m)$-Peirce ring. 

In Theorem 3.7, we show that an $n$-Peirce generalized matrix ring is in $\mathcal{T}_n$ ($n > 1$) and that if $R$ has a complete set $\{e_i\}^n_{i=1}$of orthogonal idempotents  such that each $e_i Re_i$ is a 1-Peirce ring, then $R$ is a $k$-Peirce ring for some $1 \leq k \leq n$. Example 3.2 shows that the class of $n$-Peirce rings is a proper subclass of $\mathcal{T}_n$ for $n > 1$, and that any $n$-by-$n$ generalized upper triangular matrix ring with prime diagonal rings is an $n$-Peirce ring. The class of $n$-Peirce rings has an advantange over $\mathcal{T}_n$ in that for $n > 1$, an $n$-Peirce ring has a complete set $\{e_i\}^n_{i=1}$ of orthogonal idempotents  such that each $e_iRe_i$ is a 1-Peirce ring. In Theorem 3.11, it is also shown that if $R$ has DCC on $\{ReR \ \vert \ e$ is a Peirce trivial idempotent$\}$,  then $R$ is an $n$-Peirce ring for some~$n$.

As indicated in Definition 1.1, the inner (outer) Peirce trivial idempotents can be defined in a ring without a unity. Hence, many of the results in this paper can be modified to hold in rings without a unity.

\vskip0.7cm

\section*{Notation and Terminology}
\label{notions}

\begin{enumerate}
  \item $R$ is Abelian - means every idempotent is central.
	\item $\mathcal{B}(R), \mathcal{P}(R)$ and $\mathcal J(R)$ denote the central idempotents of $R$, the prime radical of~$R$ and the Jacobson radical of $R$ respectively.
	\item $\mathcal{S}_{\ell}(R) = \{e=e^2\in R \ \vert \ \ Re = eRe\}, \ \mathcal{S}_r(R) = \{e=e^2\in R \ \vert \ eR = eRe\}$.
	\item ${\rm Cen}(R)$ is the center of $R$.
	\item ${\rm U}(R)$ is the group of units of $R$.
	\item $<->_R$ is the subring of $R$ generated by $-$, and $(-)_R$ is the ideal of $R$ generated by $-$.
	\item $X \trianglelefteq R$ means $X$ is an ideal of $R$.
	\item $\ul{r}_A(B)$ and $\ul{\ell}_A(B)$ denote the right and left annihilator of $B$ in $A$, respectively.
	\item $\Z$ and $\Z_n$ denote the ring of integers and the ring of integers modulo $n$, respectively.
	\item $\Z^+$ means the positive integers.	
\end{enumerate}

\vskip0.5cm

\section{Basic Properties of Peirce trivial Idempotents}

\begin{definition}
\label{semitrivial} 
Let $R$ be a ring, not necessarily with a unity, and let $e=e^2\in R$. We say $e$ is \emph{inner Peirce trivial} (respectively, \emph{outer Peirce trivial}) if
 $exye=exeye$ (respectively, $xey+exeye=xeye+exey$) for all $x, y\in R$. If $e$ is both inner and outer Peirce trivial, we say $e$ is Peirce trivial. \end{definition}     

For a ring $R$ with a unity, $e$ is inner (respectively, outer) Peirce trivial if and only if $eR(1-e)Re=\{0\}$ (respectively, $(1-e)ReR(1-e)=\{0\}$); moreover, $e$ is  inner Peirce trivial if and only $f=1-e$ is outer Peirce trivial. Let 

\medskip

\centerline{$\mathfrak{P}_{\rm it}(R), \mathfrak{P}_{\rm ot}(R)$ and $\mathfrak{P}_{\rm t}(R)$}

\medskip

\noindent denote the set of all inner Peirce trivial idempotents, all outer Peirce trivial idempotents and all Peirce trivial idempotents of $R$, respectively. Note that $\mathcal{B}(R) \subseteq \mathfrak{P}_t(R).$

\begin{example}
\label{two} Inner and outer Peirce trivialities are independent properties of idempotents. Let $R_1=\Z, R_2={\Z}/8\Z={\Z}_8, M_{12}={\Z}_4, M_{21}={\Z}_2$, together with tensor products $M_{12}\otimes_{R_2} M_{21}\cong {\Z}_2\mapsto 0\in R_1$ and $M_{21}\otimes_{R_1} M_{12}\cong {\Z}_2\cong 4R_2$, respectively. Then in $R = \left[ \begin{array}{cc}
R_1 & M_{12} \\
\\
M_{21} & R_2 \end{array}\right]$ the elements $e = \left[ \begin{array}{cc}
1 & 0 \\
\\
0 & 0 \end{array}\right]$ and $f = \left[ \begin{array}{cc}
0 & 0 \\
\\
0 & 1 \end{array}\right]$ are idempotents. Moreover, $e$ is inner Peirce trivial, but not outer Peirce trivial, and $f$ is outer Peirce trivial, but not inner Peirce trivial.
\end{example}

\begin{proposition}
Let $R = \left[ \begin{array}{cc} R_1 & M_{12} \\
\\
M_{21} & R_2 \end{array}\right] \in  \mathcal{T}_2$ and assume $\alpha = \left[ \begin{array}{cc} e & m \\
\\ n & f \end{array}\right] \in R$.
\begin{enumerate}
	\item [(1)] Then $\alpha = \alpha^2$ if and only if $e=e^2, f=f^2, em+mf =m$ and $ne  +fn =n$.
	\item[(2)] If $\alpha = \alpha^2$ and $e$ and $f$ are central idempotents, then $\alpha \in \mathfrak{P}_{\rm t}(R)$. In particular, if $R_1$ and $R_2$ are commutative, then $\mathfrak{P}_{\rm t}(R) = \{\alpha \in R \ \vert \ \alpha = \alpha^2\}$. 
\end{enumerate}
\end{proposition}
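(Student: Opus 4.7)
For part~(1), the plan is a direct matrix computation. Expanding the row-column product gives
$$\alpha^2 \;=\; \left[\begin{array}{cc} e^2 + mn & em + mf \\ ne + fn & nm + f^2 \end{array}\right],$$
and since $R \in \mathcal{T}_2$ the homomorphisms $\theta_{121}$ and $\theta_{212}$ vanish, so $mn = 0 \in R_1$ and $nm = 0 \in R_2$. Comparing entry-by-entry with $\alpha$ then produces exactly the four stated equations $e = e^2$, $f = f^2$, $em + mf = m$, $ne + fn = n$.

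For part~(2) I would set $\beta = 1 - \alpha$ and compute $\alpha r \beta s \alpha$ for arbitrary $r, s \in R$. Centrality of $e$ in $R_1$ gives $ea(1-e) = 0$ for all $a \in R_1$, and likewise $f$-centrality gives $fd(1-f) = 0$ for all $d \in R_2$; combined with $M_{12}M_{21} = 0 = M_{21}M_{12}$ from $R \in \mathcal{T}_2$, these force both diagonal entries of $\alpha r \beta$ to vanish, so $\alpha r \beta$ is purely off-diagonal. A second application of the $\mathcal{T}_2$ vanishings kills the $(1,1)$ and $(2,2)$ entries of $\alpha r \beta s$, and finally, after right-multiplication by $\alpha$, the two surviving candidate entries reduce (all other summands already killed by centrality) to expressions of the form $eamd'f$ and $fdna'e$. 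From part~(1) the idempotent relations give $emf = 0$ (right-multiply $m = em + mf$ by $f$) and $fne = 0$ (right-multiply $n = ne + fn$ by $e$). Centrality of $e$ and $f$ then permits the rewriting $eamd'f = aemfd' = a(emf)d' = 0$ and symmetrically $fdna'e = 0$, showing $\alpha R(1-\alpha)R\alpha = 0$, i.e., $\alpha \in \mathfrak{P}_{\rm it}(R)$.

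For outer Peirce triviality I would invoke symmetry. The hypotheses on $\alpha$ are preserved under $\alpha \mapsto 1 - \alpha$, since $1-e \in R_1$ and $1-f \in R_2$ are again central idempotents and $1-\alpha$ is idempotent. Applying the argument above to $1 - \alpha$ yields $(1-\alpha)R\alpha R(1-\alpha) = 0$, i.e., $\alpha \in \mathfrak{P}_{\rm ot}(R)$, and therefore $\alpha \in \mathfrak{P}_{\rm t}(R)$. The ``in particular'' clause is then immediate: when $R_1$ and $R_2$ are commutative, the diagonal entries $e, f$ of any idempotent $\alpha \in R$ are automatically central, so by~(1) and~(2) every idempotent of $R$ lies in $\mathfrak{P}_{\rm t}(R)$.

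The main obstacle is the last step of the inner-triviality computation, where one has to track exactly which terms are killed by $\mathcal{T}_2$, which by centrality, and which by the derived identities $emf = 0$ and $fne = 0$. Each individual simplification is routine, but the number of cross-terms to chase makes the bookkeeping easy to bungle.
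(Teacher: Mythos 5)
Your proof is correct and is essentially the paper's approach: the paper dismisses this result as ``a straightforward calculation using Definition 1.1,'' and your argument is exactly that calculation carried out in full, with the added economy of obtaining outer Peirce triviality of $\alpha$ by applying the inner-triviality computation to the idempotent $1-\alpha$ (whose diagonal entries are again central idempotents) instead of repeating the bookkeeping.
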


\begin{proof}
The proof is a straightforward calculation using Definition 1.1.
\end{proof}

Note that Proposition 1.3(2) is, in general, not true when $R \in \mathcal{T}_n$ for $n > 2$ (see Example 1.9).

As a consequence of Definition \ref{semitrivial}, one has the following descriptions:

\begin{lemma}
\label{innertrivial} For $e^2=e\in R$ the following claims are equivalent:
\begin{enumerate}
\item $e$ is inner Peirce trivial.
\item $e\ul{\ell}_R(e)=eR(1-e)$ is a right ideal of $R$.
\item $\ul{r}_R(e)e=(1-e)Re$ is a left ideal of $R$.
\item $efge=efege$ for all idempotents $f, g\in R$.
\item $h: R \rightarrow eRe$, defined by $h(x) = exe$, is a surjective ring homomorphism.
\item $eRtRe =0$ for all $t \in R$ such that $ete=0$.
\item $ReR \subseteq \ul{\ell}_R((1-e)Re)$.
\end{enumerate}
\end{lemma}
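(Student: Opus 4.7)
The plan is to show that (1) is equivalent to each of (2)--(7) separately; in a ring with unity the identity $exye = exeye$ for all $x,y$ is the same as the compact condition $eR(1-e)Re = 0$, which is by far the most convenient form to manipulate. Throughout I will decompose $1 = e + (1-e)$ on either side and exploit $e(1-e) = 0 = (1-e)e$ to reduce each direction to checking that this single three-fold product vanishes.

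Five of the six equivalences should fall out routinely. For (1)$\Leftrightarrow$(2), one first checks $e\ul{\ell}_R(e) = eR(1-e)$: if $re=0$ then $r = r(1-e)$, hence $er = er(1-e)$, and the reverse inclusion is immediate. Since every element of $eR(1-e)$ is killed on the right by $e$, the stability $eR(1-e)\cdot R \subseteq eR(1-e)$ reduces, after splitting $R = eR + (1-e)R$ on the right, to $eR(1-e)Re = 0$. Equivalence (1)$\Leftrightarrow$(3) is the left-right mirror using $\ul{r}_R(e) = (1-e)R$. For (1)$\Leftrightarrow$(5), the map $h(x)=exe$ is always additive and surjects onto $eRe$, so multiplicativity $h(x)h(y)=h(xy)$ unwinds to $exeye = exye$, which is (1) verbatim. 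For (1)$\Leftrightarrow$(6), the special choice $t=1-e$ satisfies $ete = 0$ and already yields (1) from (6); conversely, given (1) and $ete=0$, write $t = et(1-e) + (1-e)te + (1-e)t(1-e)$ and observe that each of the three resulting contributions to $eRtRe$ sits inside $eR(1-e)Re = 0$. Finally, (1)$\Leftrightarrow$(7) rewrites as $ReR(1-e)Re = 0$; the inclusion $eR \subseteq ReR$ gives one direction and the factorisation $R\cdot eR(1-e)Re = 0$ gives the other.

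The step I expect to require genuine thought is (4)$\Rightarrow$(1), since (4) only constrains products involving idempotents, whereas (1) demands vanishing for arbitrary $x,y\in R$. My device will be that for any $x,y \in R$ the two elements
\[ p := e + (1-e)xe \qquad \text{and} \qquad q := e + ey(1-e) \]
are idempotents, as a short calculation using $e(1-e)=0$ confirms. Feeding $f := q$ and $g := p$ into the identity of (4) and expanding yields $eqpe = e + ey(1-e)xe$ while $eqepe = e$, so the assumed equality forces $ey(1-e)xe = 0$ for all $x, y \in R$, i.e.\ $eR(1-e)Re = 0$, which is (1). The converse (1)$\Rightarrow$(4) is immediate on specialising $x, y$ to $f, g$ in $exye = exeye$.
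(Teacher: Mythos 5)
Your proposal is correct and takes essentially the same approach as the paper: the only implication the paper writes out is $(4)\Rightarrow(1)$, and there it uses exactly your device of perturbing $e$ by off-diagonal Peirce corner elements (the paper's idempotents $f = e - ex + exe$ and $g = e - ye + eye$ match your $q = e + ey(1-e)$ and $p = e + (1-e)xe$ up to sign) and feeding them into (4) to force $eR(1-e)Re = 0$. The remaining equivalences, which the paper dismisses as routine, are the same direct Peirce-decomposition checks you spell out.
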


\begin{proof} We show implication $4\Rightarrow 1$; the remaining implications are routine.
For any $x, y\in R$ simple computation shows $f:=e-ex+exe=ef=f^2$ and $g:=e-ye+eye=ge=g^2$, whence one has by assumption $e+exye-exeye=efge=efege=e$, implying
$exye=exeye$. Therefore $e$ is inner Peirce trivial. 
\end{proof}

\begin{lemma}
\label{outertrivial} For $e^2=e\in R$ the following claims are equivalent:
\begin{enumerate}
\item $e$ is outer Peirce trivial.
\item $e\ul{\ell}_R(e)=eR(1-e)$ is a left ideal of $R$.
\item $\ul{r}_R(e)e=(1-e)Re$ is a right ideal of $R$.
\item $feg+efege=fege+efeg$ for all idempotents $f,g \in R$.
\end{enumerate}
\end{lemma}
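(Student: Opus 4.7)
The plan is to model this proof on the proof of Lemma~\ref{innertrivial} just given: the implications $1 \Leftrightarrow 2$, $1 \Leftrightarrow 3$, and $1 \Rightarrow 4$ should all be routine, and essentially all of the content sits in the implication $4 \Rightarrow 1$, for which one needs to pick the right pair of idempotents.

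For the routine equivalences I would first reformulate outer Peirce triviality as $(1-e)ReR(1-e) = 0$, which is permissible since the excerpt already records this reformulation for rings with a unity. Then $eR(1-e)$ is a left ideal of $R$ precisely when $R \cdot eR(1-e) \subseteq eR(1-e)$; writing $r = er + (1-e)r$ and using $e \cdot eR(1-e) \subseteq eR(1-e)$ automatically, this reduces to $(1-e)R \cdot eR(1-e) \subseteq eR(1-e)$, which (since the left-hand side already lies in $(1-e)R$) is equivalent to $(1-e)ReR(1-e) = 0$. This is $1 \Leftrightarrow 2$; the argument for $1 \Leftrightarrow 3$ is symmetric. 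The implication $1 \Rightarrow 4$ follows immediately by substituting $x = f$ and $y = g$ in the defining identity of outer Peirce triviality, since $f^2 = f$ and $g^2 = g$.

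For $4 \Rightarrow 1$, given $x, y \in R$, I would set
\[
f := e + (1-e)xe \quad\text{and}\quad g := e + ey(1-e).
\]
Using $e(1-e) = (1-e)e = 0$, one checks directly that $f^2 = f$ and $g^2 = g$, and that
\[
ef = e,\quad fe = f,\quad eg = g,\quad ge = e.
\]
These identities immediately give $feg = fg$, $fege = f$, $efeg = g$, and $efege = e$, so condition~(4) applied to this pair $(f,g)$ collapses to $fg + e = f + g$, that is $fg = f + g - e$. Expanding,
\[
fg = e + (1-e)xe + ey(1-e) + (1-e)xey(1-e),
\]
while $f + g - e = e + (1-e)xe + ey(1-e)$, so the identity $fg = f + g - e$ is equivalent to $(1-e)xey(1-e) = 0$. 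Since $x,y \in R$ were arbitrary, this yields $(1-e)ReR(1-e) = 0$, i.e., $e$ is outer Peirce trivial.

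The only real obstacle is the choice of the two idempotents $f$ and $g$; everything else is bookkeeping. The heuristic is that outer Peirce triviality is a condition on the $(1-e,1-e)$-Peirce component of $ReR$, so one wants idempotents that differ from $e$ by an element of $(1-e)Re$ on one side and $eR(1-e)$ on the other. The choice $f = e + (1-e)xe$ and $g = e + ey(1-e)$ is the simplest such pair and has the convenient one-sided absorbing behavior with $e$ that makes the computation of $feg$, $fege$, $efeg$, $efege$ collapse to the required form.
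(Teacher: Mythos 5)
Your proof is correct and essentially identical to the paper's: the idempotents you choose, $f = e + (1-e)xe = e + xe - exe$ and $g = e + ey(1-e) = e + ey - eye$, are exactly the ones used in the paper's proof of $4 \Rightarrow 1$, and the reduction of condition (4) to $fg + e = f + g$, hence to $(1-e)xey(1-e) = 0$, is the same computation (the paper likewise dismisses the remaining implications as routine). Your write-up is in fact somewhat more explicit, recording the absorption identities $ef = e$, $fe = f$, $eg = g$, $ge = e$ and the verification of $1 \Leftrightarrow 2$, which the paper leaves to the reader.
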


\begin{proof} Again, we show the implication $4\Rightarrow 1$; the remaining implications are routine. For any $x,y\in R$ simple computation shows $f:=e+xe-exe=fe=f^2$ and $g:=e+ey-eye=eg=g^2$, whence one has by assumption $feg+efege=fg+e=fege+efeg=f+g$. Therefore
we have the equality $e+(e+xe-exe)(e+ey-eye)=e+xe-exe+e+ey-eye$, from which one can obtain, after simplification, that $e$ is outer Peirce trivial.
\end{proof}

\begin{corollary} For $e^2=e\in R$ the following claims are equivalent:
\begin{enumerate}
\item $e$ is Peirce trivial.
\item $e\ul{\ell}_R(e)=eR(1-e)$ is an ideal of $R$.
\item $\ul{r}_R(e)e=(1-e)Re$ is an ideal of $R$.
\item $e, 1-e\in \mathfrak{P}_{\rm it}(R)$.
\end{enumerate}
\end{corollary}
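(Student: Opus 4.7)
The plan is to derive the corollary directly from Lemmas~1.4 and~1.5 together with the symmetry observation (stated just after Definition~1.1) that $e \in \mathfrak{P}_{\rm it}(R)$ if and only if $1-e \in \mathfrak{P}_{\rm ot}(R)$. By the definition of ``Peirce trivial,'' condition~(1) says exactly that $e \in \mathfrak{P}_{\rm it}(R) \cap \mathfrak{P}_{\rm ot}(R)$, so each equivalence amounts to splitting this conjunction in a different way.

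For (1)$\Leftrightarrow$(2), I would pair Lemma~1.4(2) with Lemma~1.5(2). The first says that $e$ is inner Peirce trivial exactly when $eR(1-e)$ is a right ideal of $R$, and the second says that $e$ is outer Peirce trivial exactly when the same additive subgroup $eR(1-e)$ is a left ideal. Conjoining these gives that $e$ is Peirce trivial if and only if $eR(1-e)$ is a two-sided ideal. The equivalence (1)$\Leftrightarrow$(3) is obtained in the same way by pairing parts~(3) of the two lemmas and examining $(1-e)Re$.

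For (1)$\Leftrightarrow$(4), I would invoke the duality $e \in \mathfrak{P}_{\rm ot}(R) \iff 1-e \in \mathfrak{P}_{\rm it}(R)$, which is a formal consequence of Definition~1.1 since $(1-(1-e))R(1-e)R(1-(1-e)) = eR(1-e)Re$. Under this translation, the statement ``$e$ is both inner and outer Peirce trivial'' reads ``$e \in \mathfrak{P}_{\rm it}(R)$ and $1-e \in \mathfrak{P}_{\rm it}(R)$,'' which is precisely~(4). Since the duality is symmetric (apply it again with $e$ replaced by $1-e$), the converse implication is immediate.

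I do not expect any substantive obstacle: the corollary is a bookkeeping consequence of the two preceding lemmas combined with a symmetry that has already been noted. The only item to flag in the write-up is that $eR(1-e)$ and $(1-e)Re$ are a priori only additive subgroups, so the words ``right ideal'' and ``left ideal'' must be interpreted in the ambient ring $R$; this is already the convention of Lemmas~1.4 and~1.5, so no extra argument is needed.
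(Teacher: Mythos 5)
Your proposal is correct and matches the paper's intent exactly: the paper states this as an unproved corollary immediately following Lemmas~1.4 and~1.5, precisely because it is the bookkeeping assembly you describe --- pairing parts~(2) (respectively~(3)) of the two lemmas to get ``right ideal $+$ left ideal $=$ ideal,'' and using the duality $e \in \mathfrak{P}_{\rm ot}(R) \Leftrightarrow 1-e \in \mathfrak{P}_{\rm it}(R)$ noted after Definition~1.1 for the equivalence with~(4). No gaps; nothing further is needed.
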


From the above results, one can see that if $R$ is semiprime, then $\mathfrak{P}_{\rm it}(R) = \mathfrak{P}_{\rm ot}(R) = \mathfrak{P}_{\rm t}(R) = \mathcal{B}(R)$.

\begin{lemma}
Let $e, f \in R$ such that $e=e^2$ and $f=f^2$. 
\begin{enumerate}
	\item $e\in \mathfrak{P}_{\rm it}(R)$ implies $efe = (efe)^2, \ (ef)^2 = (ef)^3$ and $(fe)^2 = (fe)^3$.
	\item $e \in \mathfrak{P}_{\rm t}(R)$ implies $fef = (fef)^2$.
	\item $e, f \in \mathfrak{P}_{\rm it}(R)$ implies $efe, fef \in \mathfrak{P}_{\rm it}(R)$.
	\item If $R$ is a generalized matrix ring and $[e_{ij}] \in \mathfrak{P}_{\rm it}(R)$ (resp.~$\mathfrak{P}_{\rm ot}(R), \mathfrak{P}_{\rm t}(R)$), then $e_{ii} \in \mathfrak{P}_{\rm it}(R_i)$ (resp.~$\mathfrak{P}_{\rm ot}(R_i), \mathfrak{P}_{\rm t}(R_i)$).
\end{enumerate}
\end{lemma}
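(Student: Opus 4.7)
The plan is to extract each identity from the defining equations for inner and outer Peirce trivial idempotents (Definition~1.1), together with the characterizations in Lemmas~1.4 and 1.5. For part~(1), apply $exye = exeye$ with $x = y = f$; since $f^2 = f$, the left side collapses to $efe$ and the right side to $efefe = (efe)^2$, so $efe = (efe)^2$. Right-multiplying this identity by $f$ gives $efef = efefef$, i.e.\ $(ef)^2 = (ef)^3$; left-multiplying by $f$ gives $(fe)^2 = (fe)^3$.

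For part~(2), outer Peirce triviality of $e$ applied with $x = y = f$ yields $fef + efefe = fefe + efef$. Substituting $efefe = efe$ from part~(1) gives $fef + efe = fefe + efef$, and right-multiplying by $f$ produces $fef + efef = fefef + efef$, hence $fef = fefef = (fef)^2$.

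For part~(3), set $g = efe$; by part~(1) this is an idempotent. To verify $gR(1-g)Rg = 0$, take $a, b \in R$ and compute $ga(1-g)bg = gabg - gagbg$. Applying $exye = exeye$ with $x = fea$ and $y = bef$ rewrites $gabg = efeabefe$ as $efeaebefe$. Using $e - efe = e(1-f)e$, the difference collapses to $efea(e-efe)befe = e \cdot f(eae)(1-f)(ebe)f \cdot e$, where we have factored an $f$ out on each side of the remaining middle chunk. The bracketed factor lies in $fR(1-f)Rf$, which vanishes by inner Peirce triviality of $f$, so $g \in \mathfrak{P}_{\rm it}(R)$. The statement for $fef$ follows by interchanging the roles of $e$ and $f$.

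For part~(4), write $\epsilon = [e_{ij}] \in \mathfrak{P}_{\rm it}(R)$ and, for $r, s \in R_i$, let $\hat r$ and $\hat s$ denote the elements of $R$ whose only nonzero entry is $r$ (respectively $s$) at position $(i,i)$. Since $\epsilon \hat r(1-\epsilon) \hat s \epsilon = 0$ by hypothesis, in particular its $(i,i)$-entry---computed by direct entrywise multiplication to be $e_{ii} \, r \, (1 - e_{ii}) \, s \, e_{ii}$---must vanish for all $r, s \in R_i$, which gives $e_{ii} \in \mathfrak{P}_{\rm it}(R_i)$. Analogous entrywise calculations handle the outer and Peirce-trivial cases. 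The main obstacle is the bracket manipulation in part~(3): one must first invoke inner Peirce triviality of $e$ to replace $efeabefe$ by $efeaebefe$ before being able to factor an $f$ on each side of $(eae)(1-f)(ebe)$ to land inside $fR(1-f)Rf$; inner Peirce triviality of $f$ alone does not suffice without this substitution.
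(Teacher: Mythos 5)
Your proposal is correct: each part is verified by direct, sound computation from Definition~1.1 (the substitution $x=y=f$ for (1) and (2), the factorization through $fR(1-f)Rf$ for (3), and the entrywise extraction of the $(i,i)$-component for (4)). The paper dismisses this proof as ``routine,'' and your argument is precisely the kind of direct verification intended, so there is nothing to reconcile between the two.
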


\begin{proof}
This proof is routine.
\end{proof}

\begin{lemma}
Let $c,e \in R$ such that $c=c^2$ and $e=e^2$. 

\begin{enumerate}
	\item $e\in \mathfrak{P}_{\rm it}(R)$ if and only if $\mathfrak{P}_{\rm it}(eRe) = eRe\cap \mathfrak{P}_{\rm it}(R)$.
	\item $\mathfrak{P}_{\rm t}(R) \cap cRc \subseteq \mathfrak{P}_{\rm t}(cRc)$.
	\item Assume $I \trianglelefteq R$. Then $\mathfrak{P}_{\rm it}(I) = I\cap \mathfrak{P}_{\rm it}(R)$.
\end{enumerate}

\end{lemma}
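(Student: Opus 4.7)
The plan is to prove the three claims in turn, noting that (2) and (3) are formal consequences of Definition 1.1 while (1) requires more substantive manipulation. Both inner and outer Peirce triviality in Definition 1.1 are given by universally quantified identities over the ambient ring ($gxyg=gxgyg$ and $xgy+gxgyg=xgyg+gxgy$ respectively, when the idempotent is called $g$). Consequently, for (2), an idempotent $g\in cRc$ that satisfies these identities for all $x,y\in R$ a fortiori satisfies them for all $x,y\in cRc$, so $g\in \mathfrak{P}_{\rm t}(cRc)$. For (3), the inclusion $I\cap \mathfrak{P}_{\rm it}(R)\subseteq \mathfrak{P}_{\rm it}(I)$ is the same restriction argument; for the reverse, given $e\in \mathfrak{P}_{\rm it}(I)$ and arbitrary $x,y\in R$, since $ex,ye\in I$ by idealness, the $I$-level identity applied to $x':=ex$ and $y':=ye$ collapses via $e^2=e$ to $exye=exeye$, as required.

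For (1), the ``if'' direction is immediate: as the identity of $eRe$, $e$ is trivially inner Peirce trivial there, so $e\in \mathfrak{P}_{\rm it}(eRe)=eRe\cap \mathfrak{P}_{\rm it}(R)\subseteq \mathfrak{P}_{\rm it}(R)$. For the ``only if'' direction, assuming $e\in \mathfrak{P}_{\rm it}(R)$, I would establish the two set-inclusions separately. The easier one, $eRe\cap \mathfrak{P}_{\rm it}(R)\subseteq \mathfrak{P}_{\rm it}(eRe)$, uses the decomposition $e-g=(1-g)-(1-e)$: for $x,y\in eRe$ one has $x(1-e)=exe(1-e)=0$, so $gx(e-g)yg=gx(1-g)yg\in gR(1-g)Rg=0$, giving $g(eRe)(e-g)(eRe)g=0$.

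The main obstacle is the reverse inclusion, where the hypothesis $e\in \mathfrak{P}_{\rm it}(R)$ must be invoked essentially. Given $g\in \mathfrak{P}_{\rm it}(eRe)$, the plan is to verify $ga(1-g)bg=0$ for all $a,b\in R$ via the symmetric decomposition $1-g=(e-g)+(1-e)$. For the $(e-g)$-part, using $g=eg=ge$ one rewrites $ga(e-g)bg = g(eae)(e-g)(ebe)g\in g(eRe)(e-g)(eRe)g=0$. For the $(1-e)$-part, the same absorptions give $ga(1-e)bg=e(ga)(1-e)(bg)e\in eR(1-e)Re=0$, which vanishes by the hypothesis on $e$. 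Summing yields $gR(1-g)Rg=0$, so $g\in \mathfrak{P}_{\rm it}(R)$. The crucial insight is the splitting $1-g=(e-g)+(1-e)$, which separates the portion of the ``complement'' of $g$ lying inside $eRe$ (handled by $g$'s triviality in $eRe$) from the portion outside (handled by the hypothesized triviality of $e$).
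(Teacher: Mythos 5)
Your proof is correct and takes essentially the same route as the paper's: parts (2) and (3) are the same restriction and absorption arguments (the paper applies the $I$-level identity to $fx, yf \in I$ exactly as you do), and in part (1) your decomposition $1-g=(e-g)+(1-e)$ is just the annihilator-form restatement of the paper's identity chain $cxyc = c(exye)c = c((exe)(eye))c = c(exe)c(eye)c = cxcyc$, which likewise uses first the inner triviality of $e$ in $R$ and then that of $g$ in $eRe$. No gaps.
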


\begin{proof} (1) Clearly, $eRe\cap \mathfrak{P}_{\rm it}(R)\subseteq \mathfrak{P}_{\rm it}(eRe)$. Assume $e \in \mathfrak{P}_{\rm it}(R), \ c \in \mathfrak{P}_{\rm it}(eRe)$ and $x,y \in R$. Then $cxyc = c(exye)c = c((exe)(eye))c= c(exe)c(eye)c = cxcyc$. Thus $eRe\cap \mathfrak{P}_{\rm it}(R) = \mathfrak{P}_{\rm it}(eRe)$. Conversely, assume $eRe \cap \mathfrak{P}_{\rm it}(R) = \mathfrak{P}_{\rm it}(eRe)$. Since $e \in \mathfrak{P}_{\rm it} (eRe)$, then $e \in \mathfrak{P}_{\rm it}(R)$.

	(2) The proof of this part is straightforward.
	
	(3) Clearly, $I \cap \mathfrak{P}_{\rm it}(R) \subseteq \mathfrak{P}_{\rm it}(I)$. Let $f \in \mathfrak{P}_{\rm it}(I)$ and $x,y \in R$. Then $fxyf = f((fx)(yf))f = f(fx)f(yf)f= fxfyf$. Therefore $\mathfrak{P}_{\rm it}(I) = I \cap \mathfrak{P}_{\rm it}(R)$.
 \end{proof}

\begin{example}
In general, for $c \in \mathfrak{P}_{\rm t}(R)$,  \ $\mathfrak{P}_{\rm t}(R) \cap cRc \subsetneq \mathfrak{P}_{\rm t}(cRc)$. Let $R$ be the $3$-by-$3$ upper triangular matrix ring over a ring $A$ with $e=E_{22} \in R$ and $c = E_{22} + E_{33}$. Then $c \in \mathfrak{P}_{\rm t}(R)$ and $e \in \mathfrak{P}_{\rm t}(cRc)$, but $e\not\in \mathfrak{P}_{\rm ot}(R)$. Thus, $\mathfrak{P}_{\rm t}(R) \cap cRc  \subsetneq \mathfrak{P}_{\rm t}(cRc)$.
\end{example}

In [BHKP] (also see [AvW1] and [AvW2]), it is shown that a ring $R$ has a generalized triangular matrix form if and only if it has a set of left (or right) triangulating idempotents. Such a set is an ordered complete set of orthogonal idempotents which are contructed from $\mathcal{S}_{\ell}(R)$ and $\mathcal{S}_r(R)$. 

Our next result and results from Sections 2 and 3 show that $\mathfrak{P}_{\rm it}(R)$ and $\mathfrak{P}_{\rm t}(R)$ can be used to naturally extend the notion of a generalized triangular matrix ring. Moreover, the inherent symmetry in the definitions of $\mathfrak{P}_{\rm it}(R)$ and $\mathfrak{P}_{\rm t}(R)$ frees us from the "ordered" condition on sets of idempotents when characterizing these natural extensions.

\begin{proposition} (1) $\mathcal{S}_l(R) \cup \mathcal{S}_r(R) \subseteq \mathfrak{P}_{\rm t}(R)$.

(2) Let $\{e_1, \ldots, e_n\}$ be a set of left or right triangulating idempotents of $R$. Then $\{e_1, \ldots, e_n\} \subseteq \mathfrak{P}_{\rm it}(R)$.
\end{proposition}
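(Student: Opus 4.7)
For (1), I would unwind the defining identities directly. If $e\in\mathcal{S}_{\ell}(R)$, then $Re=eRe$, equivalently $(1-e)Re=0$; multiplying on the left by anything gives $eR(1-e)Re=0$, and multiplying on the right by anything gives $(1-e)ReR(1-e)=0$, so $e\in\mathfrak{P}_{\rm t}(R)$. The case $e\in\mathcal{S}_{r}(R)$ is the left--right dual: $eR=eRe$, i.e.\ $eR(1-e)=0$, which kills the same two Peirce-trivial expressions from the opposite side.

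For (2), the cleanest route I see is to invoke the structural consequence recalled just above the statement (from~[BHKP]): an ordered set of left (respectively right) triangulating idempotents realises $R$ as a generalised triangular matrix ring, so in the left case $e_iRe_j=0$ whenever $i>j$, and in the right case $e_iRe_j=0$ whenever $i<j$. I would fix $k$, use orthogonality to write $1-e_k=\sum_{j\ne k}e_j$, and expand
\[
e_kR(1-e_k)Re_k \;=\; \sum_{j\ne k}e_kRe_jRe_k.
\]
In the left case, for $j<k$ one has $e_kRe_j=0$, and for $j>k$ one has $e_jRe_k=0$; in either situation each summand vanishes, and the right case is symmetric. Hence $e_kR(1-e_k)Re_k=0$, which is exactly $e_k\in\mathfrak{P}_{\rm it}(R)$.

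Alternatively, if one prefers an internal argument that does not invoke the triangular representation, I would proceed by induction on $k$. The base case $e_1\in\mathcal{S}_{\ell}(R)\subseteq\mathfrak{P}_{\rm it}(R)$ is supplied by~(1). At step $k$, with $f_{k-1}=1-e_1-\cdots-e_{k-1}$ and $\bar e_k=f_{k-1}-e_k$, I would decompose $1-e_k=e_1+\cdots+e_{k-1}+\bar e_k$. The piece $e_kR\bar e_kRe_k$ is precisely the inner Peirce-triviality expression for $e_k$ inside the corner $f_{k-1}Rf_{k-1}$, where $e_k$ is left semicentral, and so it vanishes by~(1) applied in that corner. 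Each summand $e_kRe_jRe_k$ with $j<k$ collapses using $e_ke_j=0$ together with the identity $f_{j-1}Re_j=e_jRe_j$, which is equivalent to $e_j\in\mathcal{S}_{\ell}(f_{j-1}Rf_{j-1})$. The only real obstacle is this bookkeeping step, i.e.\ carefully distinguishing semicentrality in $R$ itself from semicentrality in each successive corner and routing each factor of $1-e_k$ to a place where either orthogonality or corner-semicentrality can act; invoking the triangular-matrix representation makes this obstacle disappear entirely.
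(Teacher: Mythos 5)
Your proof is correct in both parts, and for part (2) your primary route is genuinely different from the paper's. Part (1) is exactly what the paper leaves as ``immediate from the definitions'': your computation $(1-e)Re=0$, hence $eR(1-e)Re=0$ and $(1-e)ReR(1-e)=0$ (and its right-sided dual), is the intended argument. For part (2), the paper's one-line proof cites the [BHKP] facts about triangulating idempotents together with its own Lemma 1.8(1) (if $c\in\mathfrak{P}_{\rm it}(R)$ then $\mathfrak{P}_{\rm it}(cRc)=cRc\cap\mathfrak{P}_{\rm it}(R)$): since triangulating idempotents are built recursively, with $e_k$ left semicentral in the corner $c_{k-1}Rc_{k-1}$ where $c_{k-1}=1-(e_1+\cdots+e_{k-1})$, part (1) applied inside each corner gives $e_k\in\mathfrak{P}_{\rm it}(c_{k-1}Rc_{k-1})$, and each $c_j$ is right semicentral (hence inner Peirce trivial) in the previous corner, so iterating Lemma 1.8(1) lifts everything to $\mathfrak{P}_{\rm it}(R)$. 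Your main argument instead invokes the global generalized triangular matrix form ($e_iRe_j=0$ for $i>j$ in the left case) and kills each summand $e_kRe_jRe_k$ of $e_kR(1-e_k)Re_k$ directly, one factor at a time. That is more elementary and transparent, but it leans on the full representation theorem of [BHKP] rather than only the recursive definition of a triangulating set; the paper's route stays internal and reuses a lemma (1.8) that recurs throughout the paper (e.g., in Theorems 3.3 and 3.7). Your fallback ``internal'' induction is essentially the paper's argument carried out by hand: the bookkeeping you identify as the real obstacle --- distinguishing semicentrality in $R$ from semicentrality in successive corners and routing each factor of $1-e_k$ appropriately --- is precisely what Lemma 1.8(1) packages, so that variant can be shortened to a citation of it.
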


\begin{proof} (1) This part is immediate from the definitions.

	(2) From [BHKP, p.~560 and Corollary 1.6] and Lemma 1.8, each $e_i \in \mathfrak{P}_{\rm it}(R)$.
\end{proof}

From Proposition 1.10, $\mathcal{B}(R) \subseteq \mathcal{S}_l(R) \cup \mathcal{S}_r(R) \subseteq \mathfrak{P}_{\rm t}(R) \subseteq \mathfrak{P}_{\rm it}(R)$  ($\mathfrak{P}_{\rm ot}(R)$).  If $R$ is semiprime these containment relations become equalities by Lemmas 1.4 and~1.5.

\begin{example}
Let $A$ be a ring whose only idempotents are $0$ and $1$. Assume $0 \neq X, Y \trianglelefteq A$. Using Proposition 1.3 we obtain:

\begin{enumerate}
	\item Let $R = \left[ \begin{array}{ll} A & X \\
	0 & A \end{array}\right]$. Then $\mathcal{S}_l (R) \cup \mathcal{S}_r(R) = \mathfrak{P}_{\rm t}(R) = \{e \ \vert \  e = e^2 \in R\}$.
	\item Let $R = \left[ \begin{array}{ll} A & X \\
	Y & A \end{array}\right]$ and $XY = 0 =YX$. Then $\mathcal{S}_r(R) = \mathcal{S}_l(R) = \{0,1\} \subsetneq \mathfrak{P}_{\rm t}(R) = \{e \ \vert \ e = e^2 \in R\}$.
	\item Let $B$ be a subring of $A$ with $X \trianglelefteq B$ and $R = \left[ \begin{array}{cc} A & X \\
	A/X & B \end{array}\right]$. Then $\mathcal{S}_r(R) = \mathcal{S}_l(R) = \{0,1\} \subsetneq \mathfrak{P}_{\rm t}(R) = \{e \ \vert \ e = e^2 \in R\}$.
\end{enumerate}
\end{example}

\smallskip
We conclude Section 1 by showing in the next results (1.12 - 1.16) that given a base ring~$R$, an overring $T$, and a set $\mathcal{E}$ contained in $\mathfrak{P}_{\rm it}(T) \cup \mathfrak{P}_{\rm ot}(T)$, there is a significant transfer of information between $R$ and~$S$, where $S$ is the subring of~$T$ generated by $R$ and $\mathcal{E}$.  These results indicate the importance of the inner and outer Peirce trivial idempotents.

Let $S$ be an overring of $R$.  We consider the following properties between prime ideals of $R$ and $S$ (see 
    [BPR2, pp.~295-296] or [R1, p.~ 292]).
\medskip		
			
    (1)  {\it Lying over (LO)}.  For any prime ideal $P$ of $R$, there exists a prime ideal $Q$ of $S$ such that $P = Q \cap R$.
		
    (2)  {\it Going up (GO)}.  Given prime ideals $P_1 \subseteq P_2$ of $R$ and $Q_1$ of $S$ with $P_1 = Q_1 \cap R$, there exists a prime ideal $Q_2$ of $S$ with 
         $Q_1 \subseteq Q_2$ and $P_2 = Q_2 \cap R$.
					
    (3)  {\it Incomparable (INC)}. Two different prime ideals of $S$ with the same contraction in $R$ are not comparable.

\begin{lemma}
Let $T$ be a ring, $R$ a subring of $T$, 
$$\mathcal{E}_{\mathcal{P}}  = \{e = e^2 \in T \ \vert \  e + \mathcal{P}(T) \ \mbox{is central in} \ T/\mathcal{P}(T)\},$$
and $S = \langle R \cup \mathcal{E} \rangle_T$, where $\emptyset \neq \mathcal{E} \subseteq \mathcal{E}_\mathcal{P}$ Then:

\begin{enumerate}
	\item $\mathfrak{P}_{\rm it}(T) \cup \mathfrak{P}_{\rm ot}(T) \subseteq \mathcal{E}_\mathcal{P}$.
	\item If $K$ is a prime ideal of $S$, then $R/(K\cap R) \cong S/K$.
	\item LO, GU and INC hold between $R$ and $S$.
\end{enumerate}
\end{lemma}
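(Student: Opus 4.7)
My plan is to prove (1) by direct computation with the Peirce-trivial identities, deduce (2) from (1) using the fact that a prime ring has only trivial central idempotents, and then obtain (3) via standard Zorn and quotient arguments built on~(2).

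For part (1), let $e \in \mathfrak{P}_{\rm it}(T)$, so that $eT(1-e)Te = 0$. For arbitrary $t \in T$, put $x = (1-e)te$ and $y = et(1-e)$. A direct computation yields $xrx = (1-e)t\cdot\bigl(er(1-e)te\bigr) = 0$ and $yry = \bigl(et(1-e)re\bigr)\cdot t(1-e) = 0$ for every $r \in T$, since the parenthesized factors lie in $eT(1-e)Te = 0$. Any $z \in T$ satisfying $zTz = 0$ belongs to every prime ideal $P$ of $T$ (from $zTz \subseteq P$ and primeness), and hence to $\mathcal{P}(T)$. Therefore $x, y \in \mathcal{P}(T)$, so $[e,t] = y - x \in \mathcal{P}(T)$, forcing $e \in \mathcal{E}_{\mathcal{P}}$. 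If instead $e \in \mathfrak{P}_{\rm ot}(T)$, then $1-e \in \mathfrak{P}_{\rm it}(T)$ by the remark following Definition~1.1, and centrality of $1-e$ modulo $\mathcal{P}(T)$ is equivalent to centrality of $e$.

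For part (2), let $K$ be prime in $S$. For each $e \in \mathcal{E}$ and $s \in S \subseteq T$, part~(1) gives $[e,s] \in \mathcal{P}(T)$, so $[e,s]$ is strongly nilpotent in $T$. A witnessing sequence in $S$ for $[e,s]$ is automatically a sequence in $T$, so strong nilpotence in $T$ restricts to strong nilpotence in $S$; hence $[e,s] \in \mathcal{P}(S) \subseteq K$. Thus $e + K$ is a central idempotent in the prime ring $S/K$, forcing $e + K \in \{0, 1\}$. Since $S = \langle R \cup \mathcal{E}\rangle_T$, the image of $R$ alone generates $S/K$, giving $S/K = (R + K)/K \cong R/(R \cap K)$.

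For part (3), INC follows immediately from~(2): if $Q_1 \subsetneq Q_2$ are primes of $S$ with the same contraction to $R$, the canonical surjection $S/Q_1 \twoheadrightarrow S/Q_2$, read through the identifications of~(2), is the identity on $R/(Q_1 \cap R)$ and hence an isomorphism, contradicting $Q_1 \subsetneq Q_2$. For LO, given $P$ prime in $R$, Zorn's lemma on the nonempty chain-closed family $\Omega = \{J \trianglelefteq S : J \cap R \subseteq P\}$ yields a maximal element $K$; the standard argument shows $K$ is prime (if $AB \subseteq K$ with $A, B \not\subseteq K$, maximality produces $a \in (K+A)\cap R \setminus P$ and $b \in (K+B)\cap R \setminus P$ with $aRb \subseteq K \cap R \subseteq P$, contradicting $P$ prime). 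To see $K \cap R = P$, suppose $p \in P \setminus (K \cap R)$; by maximality some $a \in (K + (p)_S) \cap R$ lies outside $P$. Writing $a = k + \sum s_i p t_i$ and using~(2) to pick $r_i, r_i' \in R$ with $s_i \equiv r_i$ and $t_i \equiv r_i'$ modulo $K$, we find $a \equiv \sum r_i p r_i' \pmod{K}$; since $\sum r_i p r_i' \in RPR \subseteq P$, this gives $a \in P + K$, whence $a - p' \in R \cap K \subseteq P$ for some $p' \in P$ and so $a \in P$, a contradiction. Finally, GU is obtained by passing to $\bar R = R/P_1 \hookrightarrow S/Q_1 = \bar S$: the images of $\mathcal{E}$ are central idempotents in $\bar S$ by~(2), so the lemma's hypotheses persist with $\bar T = \bar S$, and applying LO in this quotient produces a prime $Q_2 \supseteq Q_1$ with $Q_2 \cap R = P_2$.

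The main obstacle I anticipate is the equality $K \cap R = P$ in LO: the inclusion $\subseteq$ holds by construction, but the reverse requires (2) to rewrite the $S$-coefficients surrounding $p$ as $R$-coefficients modulo~$K$, which is what lets $P \trianglelefteq R$ absorb the rewritten element and close the contradiction.
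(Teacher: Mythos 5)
Your proof is correct and takes essentially the approach the paper intends: part (1) is exactly the computation underlying the paper's citation of Lemmas 1.4 and 1.5 (the off-diagonal pieces $(1-e)te$ and $et(1-e)$ satisfy $zTz=0$, hence lie in $\mathcal{P}(T)$, so $[e,t]\in\mathcal{P}(T)$), while parts (2) and (3) reconstruct the standard argument of [BPR1, Lemma~2.1]/[BPR2, Lemma~8.3.26] to which the paper defers --- the idempotents of $\mathcal{E}$ become trivial modulo any prime $K$ of $S$, so $S/K=(R+K)/K\cong R/(K\cap R)$, and LO, GU, INC then follow by Zorn's lemma and this isomorphism. Two cosmetic remarks: in part (2) the containment $[e,s]\in\mathcal{P}(T)$ comes from the hypothesis $\mathcal{E}\subseteq\mathcal{E}_{\mathcal{P}}$ rather than from part (1), and your bridge $\mathcal{P}(T)\cap S\subseteq\mathcal{P}(S)$ via strong nilpotence is precisely the adaptation needed to pass from the centrally-generated setting of the cited lemma to the ``central modulo $\mathcal{P}(T)$'' setting here.
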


\begin{proof} (1) This part follows from Lemmas 1.4 and 1.5.

 (2) and (3). The proof of these parts is similar to that in [BPR1, Lemma 2.1] or [BPR2, Lemma 8.3.26].
\end{proof}

Recall that a ring $R$ is {\it strongly $\pi$-regular} if for each $x$ there is a positive integer~$n$ (depending on $x$) such that $x^n \in x^{n+1} R$.

\begin{theorem}
Let $\mathcal{C}$ be a property of rings such that a ring $A$ has property $\mathcal{C}$ if and only if every prime factor of $A$ has property $\mathcal{C}$. Assume $T$ is a ring, $R$ is a subring of $T$ and $S:= \langle R \cup \mathcal{E} \rangle_T$, where $\emptyset \neq \mathcal{E} \subseteq \mathcal{E}_{\mathcal{P}},$ with $\mathcal{E}_{\mathcal{P}}$ as in Lemma 1.12. Then $R$ has property $\mathcal{C}$ if and only if $S$ has property $\mathcal{C}$. In particular, $R$ is strongly $\pi$-regular if and only if $S$ is strongly $\pi$-regular. 
\end{theorem}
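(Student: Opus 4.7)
The plan is to leverage Lemma~1.12 parts~(2) and~(3) to show that the collections of prime factor rings of $R$ and of $S$ agree up to isomorphism; the transfer of property $\mathcal{C}$ is then immediate from the characterization hypothesis.

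First I would verify that the contraction map $Q\mapsto Q\cap R$ realizes this correspondence at the level of factor rings. Lying Over, from Lemma~1.12(3), tells us that every prime ideal $P$ of $R$ has the form $Q\cap R$ for some prime ideal $Q$ of $S$, and then Lemma~1.12(2) gives $R/P\cong S/Q$. Conversely, for any prime ideal $Q$ of $S$, Lemma~1.12(2) yields $R/(Q\cap R)\cong S/Q$; since the right-hand side is a prime ring, $Q\cap R$ is a prime ideal of $R$, and $R/(Q\cap R)$ is a prime factor of $R$ isomorphic to the given prime factor of $S$. Hence the two families $\{R/P : P\in\mathrm{Spec}(R)\}$ and $\{S/Q : Q\in\mathrm{Spec}(S)\}$ coincide up to ring isomorphism.

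Next I would apply the hypothesis on $\mathcal{C}$: $R$ has $\mathcal{C}$ iff every prime factor of $R$ has $\mathcal{C}$, iff (by the coincidence just established) every prime factor of $S$ has $\mathcal{C}$, iff $S$ has $\mathcal{C}$. For the ``in particular'' assertion I would invoke the known fact that strong $\pi$-regularity is detected by prime factors---a ring is strongly $\pi$-regular precisely when each of its prime factor rings is strongly $\pi$-regular---and then apply the general statement with $\mathcal{C}$ equal to strong $\pi$-regularity.

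The main obstacle I anticipate is justifying that strong $\pi$-regularity satisfies the hypothesis of the theorem. The ``only if'' direction is routine, since strong $\pi$-regularity is preserved under passage to factor rings; the ``if'' direction requires an appeal to a result from the literature on strongly $\pi$-regular rings (e.g.~via the Fisher--Snider circle of ideas, lifting witnesses through the prime radical). Once this input is cited, the rest is straightforward bookkeeping from Lemma~1.12.
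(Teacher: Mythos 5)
Your proposal is correct and follows essentially the same route as the paper: both directions rest on Lemma~1.12(2) (the isomorphism $R/(Q\cap R)\cong S/Q$, which also shows $Q\cap R$ is prime) and Lemma~1.12(3) (Lying Over, to lift a prime of $R$ to a prime of $S$), with the strong $\pi$-regularity claim reduced to the Fisher--Snider theorem that a ring is strongly $\pi$-regular exactly when all its prime factors are. Your reorganization into a single ``coincidence of prime factor families'' statement is only a cosmetic repackaging of the paper's two-directional argument.
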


\begin{proof}
Assume $R$ has property $\mathcal{C}$ and $K$ is a prime ideal of $S$. From Lemma~1.12(2), $R/(K \cap R)$ is a prime ring, and so 
$R/(K \cap R)$ has property $\mathcal{C}$. Hence $S/K$ has property $\mathcal{C}$. Therefore $S$ has property $\mathcal{C}$.

Conversely, assume $S$ has property $\mathcal{C}$ and $P$ is a prime ideal of $R$. From Lemma~1.12(3), LO holds between $R$ and $S$. So there exists a prime ideal $Q$ of $S$ such that $Q \cap R = P$. By Lemma 1.12(2), $R/P = R/(Q \cap R) \cong S/Q$. Hence $R/P$ has  property $\mathcal{C}$. Therefore $R$ has property $\mathcal{C}$. 

From [FS], a ring $A$ is strongly $\pi$-regular if and only if every prime factor of $A$ is a strongly $\pi$-regular ring. 
\end{proof}

See [GW] for the definition of a special radical. Observe that the prime, Jacobson, and nil radicals are included in the collection of special radicals.

\begin{theorem}
Let $R$ be a subring of a ring $T,  \ \emptyset \neq \mathcal{E} \subseteq \mathcal{E}_\mathcal{P}$, and $S = \langle R \cup \mathcal{E} \rangle_T$. Then:
\begin{enumerate}
	\item $\rho(R) = \rho(S) \cap R$, where $\rho$ is any special radical.
	\item The classical Krull dimensions of both $S$ and $R$ are equal.
	\item If $S$ is a von Neumann regular ring, then so is $R$.
\end{enumerate}
\end{theorem}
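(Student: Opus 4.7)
For part~(1), my plan is to invoke the standard characterization of a special radical $\rho$ as $\rho(A)=\bigcap\{I\trianglelefteq A \mid A/I\in\mathcal{M}_\rho\}$, where $\mathcal{M}_\rho$ is the associated special class of prime rings, and then apply Lemma~1.12. For $\rho(R)\subseteq\rho(S)\cap R$: each prime $Q$ of $S$ with $S/Q\in\mathcal{M}_\rho$ satisfies $R/(Q\cap R)\cong S/Q\in\mathcal{M}_\rho$ by Lemma~1.12(2), so $Q\cap R$ is among the ideals defining $\rho(R)$; hence $\rho(R)\subseteq Q\cap R\subseteq Q$, and intersecting over such $Q$ yields the inclusion. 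For $\rho(S)\cap R\subseteq\rho(R)$: each prime $P$ of $R$ with $R/P\in\mathcal{M}_\rho$ is, by LO (Lemma~1.12(3)), the contraction of some prime $Q$ of $S$ with $S/Q\cong R/P\in\mathcal{M}_\rho$, so $\rho(S)\subseteq Q$ and $\rho(S)\cap R\subseteq Q\cap R=P$; intersecting gives the inclusion.

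For part~(2) I will lift and contract prime chains. A chain $P_0\subsetneq\cdots\subsetneq P_n$ of primes of $R$ lifts to $S$: LO produces $Q_0$ with $Q_0\cap R=P_0$, and GU applied inductively yields $Q_0\subseteq\cdots\subseteq Q_n$ with $Q_i\cap R=P_i$, the inclusions being strict because their contractions are. Conversely, a chain $Q_0\subsetneq\cdots\subsetneq Q_n$ of primes of $S$ contracts to a chain in $R$ whose inclusions are strict by INC (comparable primes with equal contraction would violate incomparability). Therefore the classical Krull dimensions of $R$ and $S$ coincide.

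Part~(3) is the main obstacle. My first step is to observe that $\mathcal{P}(S)=0$, since a von Neumann regular ring has no nonzero nil ideal (a nil $I\ni a\neq 0$ with $a=axa$ would contain the nilpotent idempotent $xa$, forcing $a=0$), and then that $\mathcal{P}(T)\cap S=0$ because it is a nil ideal of $S$. Together with $[e,s]\in\mathcal{P}(T)$ for $e\in\mathcal{E}$, $s\in S$, this shows that the idempotents of $\mathcal{E}$ are in fact \emph{central} in $S$. Applying part~(1) to the prime radical gives $\mathcal{P}(R)=0$, so $R$ is semiprime. For a fixed $r\in R$, the VNR-inverse $s\in S$ of $r$ uses only finitely many $e_1,\ldots,e_k\in\mathcal{E}$, and the orthogonal central idempotents $\epsilon_F=\prod_{i\in F}e_i\prod_{j\notin F}(1-e_j)$ for $F\subseteq\{1,\ldots,k\}$ decompose the subring $S_0:=\langle R\cup\{e_1,\ldots,e_k\}\rangle_T\subseteq S$ as $S_0=\bigoplus_F\epsilon_F R$. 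Expanding $s=\sum_F\epsilon_F r_F$ with $r_F\in R$ and inserting into $r=rsr$ yields $\epsilon_F(r-rr_Fr)=0$, i.e.\ $r-rr_Fr\in I_F:=\underline{r}_R(\epsilon_F)$, with $\bigcap_F I_F=0$ since $\sum_F\epsilon_F=1$; by Lemma~1.12(2) each $R/I_F$ inherits the VNR property of the direct summand $\epsilon_F R\subseteq S$. The critical remaining step, and the main obstacle, is to patch the local VNR-inverses $r_F$ into a single $r'\in R$ with $r=rr'r$; my approach is a Chinese-remainder-type argument exploiting the Boolean comaximality $(1-\epsilon_F)S_0+(1-\epsilon_G)S_0=S_0$ of the central idempotents $\epsilon_F$ in $S_0$, combined with the isomorphisms $R/(Q\cap R)\cong S/Q$ of Lemma~1.12(2), to descend the component-wise existence of VNR-inverses to an inverse inside the diagonal copy of $R$ in $\bigoplus_F R/I_F$.
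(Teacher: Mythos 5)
Your parts (1) and (2) are correct: the characterization of a special radical via its special class together with Lemma~1.12(2) and LO, and the chain-lifting/contraction argument via LO, GU and INC, are precisely the standard arguments that the paper's own (deferred) proof, citing [BPR1, Theorem~2.2] and [BPR2, Theorem~8.3.28], has in mind. In part (3) your preliminary reductions are also correct and well justified: a regular ring has no nonzero nil ideal, so $\mathcal{P}(T)\cap S=0$ and hence every $e\in\mathcal{E}$ is central in $S$; consequently a quasi-inverse $s\in S$ of $r\in R$ can be written $s=\sum_F \epsilon_F r_F$ with the $\epsilon_F$ orthogonal central idempotents summing to $1$ and $r_F\in R$, giving $r-rr_Fr\in I_F:=\underline{r}_R(\epsilon_F)$ and $\bigcap_F I_F=0$. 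But the argument stops exactly at the decisive step, and both bridges you propose for it are unsound. First, the claim that $R/I_F$ ``inherits the VNR property by Lemma~1.12(2)'' is a misapplication: Lemma~1.12(2) concerns prime ideals of $S$, and $I_F$ is not such a contraction; moreover $R/I_F\cong \epsilon_F R=\epsilon_F S_0$ is in general only a \emph{subring} of the regular corner $\epsilon_F S$ (when $\mathcal{E}$ is infinite one need not have $\epsilon_F S_0=\epsilon_F S$), and subrings of regular rings need not be regular. Second, the Chinese-remainder plan cannot work as stated: the ideals $I_F$ need not be pairwise comaximal \emph{in $R$}; the comaximality you invoke, $(1-\epsilon_F)S_0+(1-\epsilon_G)S_0=S_0$, lives in $S_0$, where $1-\epsilon_F$ exists, and it does not survive intersection with $R$, so the diagonal map $R\to\prod_F R/I_F$ need not be surjective and the tuple $(r_F+I_F)_F$ need not lift.

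The missing idea, which also renders both of the above claims unnecessary, is the elementary patching identity at the heart of Fisher--Snider's theorem: if $A,B\trianglelefteq R$ and $r-rxr\in A$, $r-ryr\in B$, then for $z=x+y-xry$ one has
$$r-rzr=(1-rx)(r-ryr)=(r-rxr)(1-yr)\in A\cap B.$$
Applying this inductively to the finitely many relations $r-rr_Fr\in I_F$ produces a single $r'\in R$ with $r-rr'r\in\bigcap_F I_F=0$, i.e.\ $r=rr'r$, which finishes part (3) with no appeal to regularity of $R/I_F$ and no comaximality. Some device of this kind is unavoidable: von Neumann regularity is \emph{not} determined by prime factor rings (which is exactly why part (3) is one-directional, unlike the property $\mathcal{C}$ of Theorem~1.13), so no argument that routes information only through the prime-ideal correspondence of Lemma~1.12, as your patching plan does, can close the gap.
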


\begin{proof}
Using Lemma 1.12, the proof is similar to [BPR1, Theorem 2.2] or [BPR2, Theorem 8.3.28].
\end{proof}

\begin{lemma}
Let $T \in \mathcal{T}_n \ (n > 1)$ and
$$\mathcal{E}_k = \{[t_{ij}]\in T | \, t_{kj} \in M_{kj} \ \mbox{for} \ j \neq k, t_{kk} = 1 \in T_k \ \mbox{and all other entries are zero} \}.$$
Then $\cup^n_{k=1} \mathcal{E}_k \subseteq \mathfrak{P}_{\rm it}(T)$.
\end{lemma}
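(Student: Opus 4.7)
Fix an arbitrary $e = [t_{ij}] \in \mathcal{E}_k$; so $e_{kk} = 1$, $e_{kj} = t_{kj} \in M_{kj}$ for $j \neq k$, and $e_{ij} = 0$ whenever $i \neq k$. The plan has three parts: first to verify that $e$ is an idempotent, then to reduce ``$e \in \mathfrak{P}_{\rm it}(T)$'' to the condition $eT(1-e)Te = 0$ via the equivalence recorded just after Definition 1.1 (or Lemma 1.4), and finally to establish this condition by a direct matrix computation in which the hypothesis $T \in \mathcal{T}_n$ (namely $\theta_{iji} = 0$ for $i \neq j$) kills every potentially surviving term. Idempotency will be immediate: since only row $k$ of $e$ is nonzero, $(e^2)_{ij} = 0$ for $i \neq k$, and $(e^2)_{kj} = \sum_\ell t_{k\ell}\, t_{\ell j} = t_{kk}\, t_{kj} = t_{kj}$ because $t_{\ell j} = 0$ for $\ell \neq k$.

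Two structural observations will drive the main computation. First, the $k$-th column of $1-e$ is zero, because $(1-e)_{\ell k} = \delta_{\ell k} - e_{\ell k}$ vanishes both when $\ell = k$ (as $e_{kk} = 1$) and when $\ell \neq k$ (as $e_{\ell k} = 0$). Second, because $e$ has only its $k$-th row nonzero, for any $A \in T$ one obtains $(eAe)_{ij} = 0$ whenever $i \neq k$, and $(eAe)_{kj} = (eA)_{kk}\, e_{kj}$. Setting $A := x(1-e)y =: w$, it therefore suffices to show $(ew)_{kk} = 0$ for arbitrary $x,y \in T$. Expanding,
\[
(ew)_{kk} \;=\; w_{kk} + \sum_{\alpha \neq k} t_{k\alpha}\, w_{\alpha k}.
\]
For each $\alpha \neq k$, the factor $t_{k\alpha} \in M_{k\alpha}$ is paired with $w_{\alpha k} \in M_{\alpha k}$ through $\theta_{k\alpha k}$, which is $0$ by the $\mathcal{T}_n$ hypothesis. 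Only $w_{kk} = \sum_m (x(1-e))_{km}\, y_{mk}$ survives; its $m = k$ summand is zero by the column-$k$ observation applied to $x(1-e)$, and for $m \neq k$ the summand pairs $(x(1-e))_{km} \in M_{km}$ with $y_{mk} \in M_{mk}$ through $\theta_{kmk} = 0$. Thus $(ew)_{kk} = 0$, giving $ex(1-e)ye = 0$ for all $x, y \in T$, and hence $e \in \mathfrak{P}_{\rm it}(T)$.

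The main obstacle, I expect, is nothing conceptual but purely bookkeeping: at each bilinear multiplication arising when $ex(1-e)ye$ is expanded one has to identify exactly which $\theta_{ijk}$ is invoked, and confirm that every product which is not already zero for structural reasons eventually factors through some $\theta_{k\ell k}$ with $\ell \neq k$, so that the defining property of $\mathcal{T}_n$ finishes it off.
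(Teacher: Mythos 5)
Your proof is correct, and it is exactly the argument the paper has in mind: the paper dismisses this lemma with ``a routine but tedious application of Definition~1.1,'' and your computation (checking $e^2=e$, reducing $ex(1-e)ye=0$ to the single entry $(ew)_{kk}$, and killing each surviving term via $\theta_{k\alpha k}=0$ or the vanishing $k$-th column of $1-e$) is precisely that routine verification, carried out correctly and with the bookkeeping organized efficiently.
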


\begin{proof}
The proof of this result is a routine but tedious application of Definition~1.1.
\end{proof}

\begin{theorem}
 Let $T \in \mathcal{T}_n \ (n > 1)$, $R = \mathcal{D}(T)$, and $\mathcal{E} = \cup^n_{k=1} \mathcal{E}_k$ (as in Lemma~1.15). Then:
\begin{enumerate}
	\item $\langle R \cup \mathcal{E}\rangle_T = S = T$.
	\item $R$ has property $\mathcal{C}$ (as in Theorem 1.13) if and only if $T$ has property $\mathcal{C}$.
  \item  $\rho(R) = \rho(T) \cap R$.
  \item  The classical Krull dimension of both $R$ and $T$ are equal.
\end{enumerate}
\end{theorem}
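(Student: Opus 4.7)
The plan is to establish (1) by direct computation, and then to derive (2)--(4) as immediate consequences of the general transfer machinery (Lemma~1.12 together with Theorems~1.13 and 1.14).

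For (1), the inclusion $S \subseteq T$ is automatic, so the task is to show $T \subseteq S$. It suffices to produce every single-entry matrix of $T$ inside $S$. Matrices supported in a single diagonal position $(i,i)$ (with entry in $R_i$) already lie in $R = \mathcal{D}(T)$. For an off-diagonal position $(k,j)$ with $j \neq k$ and an arbitrary $m \in M_{kj}$, I form the element $\varepsilon \in \mathcal{E}_k$ whose $(k,k)$-entry is $1 \in R_k$, whose $(k,j)$-entry is $m$, and whose remaining row-$k$ entries are zero. Since $E_{jj} \in R$, a direct row--column computation gives that $\varepsilon \cdot E_{jj}$ is the matrix with $m$ in position $(k,j)$ and zeros elsewhere, using $E_{kk}\cdot E_{jj}=0$ (because $k \neq j$) and the bimodule identity $m \cdot 1_{R_j}=m$. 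Hence every single-entry matrix of $T$ lies in $S$, and summing the finitely many such matrices of an arbitrary $[t_{ij}] \in T$ places $[t_{ij}]$ in $S$.

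With $S = T$ established, (2)--(4) follow formally. Lemma~1.15 gives $\mathcal{E} \subseteq \mathfrak{P}_{\rm it}(T)$, and Lemma~1.12(1) then yields $\mathcal{E} \subseteq \mathcal{E}_\mathcal{P}$; thus the hypotheses of Theorems~1.13 and 1.14 hold for the triple $(R,T,\mathcal{E})$. Statement (2) is Theorem~1.13, statement (3) is Theorem~1.14(1), and statement (4) is Theorem~1.14(2).

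The only substantive content lies in (1), and the main obstacle is bookkeeping rather than depth: one must verify that $\varepsilon$ is genuinely an idempotent in $\mathcal{E}_k$ (which is ensured by the fact that the only nonzero row of $\varepsilon$ is row $k$, so no reverse products $M_{jk} \cdot M_{kj}$ can reappear on the diagonal) and that the product $\varepsilon \cdot E_{jj}$ isolates precisely the single off-diagonal entry $m$ without spurious contributions from other bimodule maps $\theta_{ij\ell}$. Once these checks are performed, the remaining statements are immediate from the quoted theorems.
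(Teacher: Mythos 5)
Your proposal is correct and takes essentially the same route as the paper, whose proof is just the citation ``Use Lemmas 1.12 and 1.15 and Theorems 1.13 and 1.14''; your explicit verification that $\langle R\cup\mathcal{E}\rangle_T = T$ simply fills in the generation step the paper leaves implicit, and your derivation of (2)--(4) from Theorems 1.13 and 1.14 via $S=T$ is exactly the intended argument. (A minor simplification: since $\varepsilon = E_{kk} + mE_{kj}$ with $E_{kk}\in R$, one gets $mE_{kj} = \varepsilon - E_{kk} \in S$ by subtraction, avoiding the product computation with $E_{jj}$ altogether.)
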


\begin{proof}
Use Lemmas 1.12 and 1.15 and Theorems 1.13 and 1.14.
\end{proof}

This result extends [TLZ, Corollary 3.6].

\vskip0.5cm

\section{Characterization of $\mathcal{T}_n$}

\begin{lemma}
Let $\{e_1, \ldots, e_n\}$ be a complete set of orthogonal idempotents of $R$.
\begin{enumerate}
	\item $e_i \in \mathfrak{P}_{\rm it}(R)$ if and only if $e_iRe_jRe_i =0$ for all $j \neq i$.
	\item $e_j \in \mathfrak{P}_{\rm ot}(R)$ if and only if $e_iRe_jRe_k =0$ for all $i \neq j$ and $k \neq j$.
	\item $\{e_1, \ldots, e_n\} \subseteq \mathfrak{P}_{\rm ot}(R)$ if and only if $\{e_1, \ldots, e_n\} \subseteq \mathfrak{P}_{\rm t}(R)$.
\end{enumerate}
\end{lemma}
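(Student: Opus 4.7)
The plan is to reduce the three claims to direct-sum bookkeeping on the Peirce decomposition. Recall from the discussion following Definition~1.1 that $e_i$ is inner (respectively outer) Peirce trivial in a ring with unity precisely when $e_iR(1-e_i)Re_i=0$ (respectively $(1-e_i)Re_iR(1-e_i)=0$). Since $\{e_1,\ldots,e_n\}$ is a complete orthogonal set, $1-e_i=\sum_{j\ne i}e_j$, so expanding and distributing yields
\[ e_iR(1-e_i)Re_i=\sum_{j\ne i}e_iRe_jRe_i \quad\text{and}\quad (1-e_j)Re_jR(1-e_j)=\sum_{\substack{i\ne j\\ k\ne j}}e_iRe_jRe_k \]
as additive subgroups of $R$.

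For (1) and (2) I would then invoke the elementary observation that if a sum $A_1+\cdots+A_m$ of subgroups of an abelian group vanishes, each summand $A_\ell$ also vanishes (since each $A_\ell$ is itself contained in the sum). Applied to the two displays above, this immediately gives both equivalences with no further calculation needed.

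For (3), the implication $\{e_i\}\subseteq\mathfrak{P}_{\rm t}(R)\Rightarrow\{e_i\}\subseteq\mathfrak{P}_{\rm ot}(R)$ is trivial from the definition of Peirce trivial. For the interesting direction, I would assume every $e_j$ is outer Peirce trivial and fix any pair $i\ne j$. Applying the criterion of part~(2) to $e_j$ with both outer indices chosen equal to $i$ (which is permissible since $i\ne j$) forces $e_iRe_jRe_i=0$. As this holds for every $j\ne i$, part~(1) delivers $e_i\in\mathfrak{P}_{\rm it}(R)$, and combined with the standing outer triviality hypothesis we conclude $e_i\in\mathfrak{P}_{\rm t}(R)$.

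I do not foresee any genuine obstacle here: the whole lemma is essentially an unpacking of the definitions against the identity $1-e_i=\sum_{j\ne i}e_j$, so the only risk is an indexing slip when manipulating the double sum in~(2). The conceptual punchline of~(3) is simply that the symmetric two-sided condition characterizing outer triviality in~(2) already contains, as its diagonal special case $k=i$, the one-sided condition characterizing inner triviality in~(1).
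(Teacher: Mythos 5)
Your proposal is correct and follows essentially the same route as the paper: expand $1-e_i=\sum_{j\ne i}e_j$ in the unital characterizations $e_iR(1-e_i)Re_i=0$ and $(1-e_j)Re_jR(1-e_j)=0$, distribute, and conclude that the vanishing of the sum forces each summand $e_iRe_jRe_i$ (resp.\ $e_iRe_jRe_k$) to vanish, with (3) then following from (1) and (2) exactly as you spell out. The only cosmetic difference is that the paper observes the second expansion is a direct sum of Peirce components, whereas you correctly note that directness is unnecessary since each subgroup summand is already contained in the total sum.
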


\begin{proof} (3) follows obviously from (1) and (2). Furthermore, (1) and (2) are immediate consequences of the definition of Peirce trivial idempotents by observing
$$0=e_iR(1-e_i)Re_i=e_iR(\sum_{j\neq i}e_j)Re_i=\sum_{j\neq i} e_iRe_jRe_i\Leftrightarrow \forall j\neq i\,\, e_iRe_jRe_i=0,$$
and
$$0=(1-e_j)Re_jR(1-e_j)=\oplus_{i\neq j\neq k} e_iRe_jRe_k \Leftrightarrow \forall i\neq j\neq k\,\, e_iRe_jRe_k=0.$$ 
\end{proof}

Lemma 2.1 shows remarkably that inner and outer Peirce trivial idempotents behave quite differently when they are considered together as a complete set of idempotents although their definition seems very symmetric! Lemma 2.1 shows clearly the equivalence of the first three statements in the next result. 

\begin{theorem}
Let $\{e_1, \ldots, e_n\}$ be a complete set of orthogonal idempotent elements of $R$. The following conditions are equivalent:
\begin{enumerate}
	\item $R^\pi \in \mathcal{T}_n$.
	\item $e_iRe_jRe_i = 0$, for all $i \neq j$.
	\item $\{e_1, \ldots, e_n\} \subseteq \mathfrak{P}_{\rm it}(R)$.
	\item $\mathcal{D}(R^\pi)^-$ is a right ideal of $R^\pi$.
\end{enumerate}
\end{theorem}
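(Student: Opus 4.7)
The plan is to split the equivalence into three short pieces: (1)$\Leftrightarrow$(2) is a direct unpacking of the definition of $\mathcal{T}_n$ in terms of the homomorphisms $\theta_{iji}$; (2)$\Leftrightarrow$(3) is immediate from Lemma 2.1(1) applied to each $e_i$; and (2)$\Leftrightarrow$(4) is a matrix-entry computation on $\mathcal{D}(R^\pi)^-$. With Lemma 2.1 in hand the bulk of the work reduces to two small checks, so I would organize the proof around these.

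For (1)$\Leftrightarrow$(2), I would recall that by construction of $R^\pi$ the homomorphism $\theta_{iji}\colon e_iRe_j\otimes_{e_jRe_j}e_jRe_i\to e_iRe_i$ is induced by ring multiplication in $R$, sending $x\otimes y$ to $xy$. Its image is the bimodule product $e_iRe_j\cdot e_jRe_i=e_iRe_jRe_i$. Consequently $R^\pi\in\mathcal{T}_n$ (that is, $\theta_{iji}=0$ for every $i\ne j$) holds if and only if $e_iRe_jRe_i=0$ for every $i\ne j$. The equivalence (2)$\Leftrightarrow$(3) then drops out of Lemma 2.1(1), which says exactly that $e_i\in\mathfrak{P}_{\rm it}(R)$ iff $e_iRe_jRe_i=0$ for all $j\ne i$.

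For (2)$\Leftrightarrow$(4), take $[r_{pq}]\in\mathcal{D}(R^\pi)^-$ (so $r_{pp}=0$ for every $p$) and an arbitrary $[s_{pq}]\in R^\pi$. The diagonal entries of the product are
$$\bigl([r_{pq}][s_{pq}]\bigr)_{ii}=\sum_{k=1}^{n}r_{ik}s_{ki}=\sum_{k\ne i}r_{ik}s_{ki},$$
so $\mathcal{D}(R^\pi)^-$ is a right ideal of $R^\pi$ precisely when every such sum vanishes. If (2) holds, each term $r_{ik}s_{ki}$ lies in $e_iRe_k\cdot e_kRe_i=0$, giving the right-ideal property. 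For the converse, for any fixed $i\ne k$ and any $u\in e_iRe_k$, $v\in e_kRe_i$, I would test the right-ideal condition on the matrix $[r_{pq}]$ whose only nonzero entry is $r_{ik}=u$ (note $r_{ii}=0$, so this lies in $\mathcal{D}(R^\pi)^-$) against the matrix $[s_{pq}]$ whose only nonzero entry is $s_{ki}=v$; the $(i,i)$-entry of the product is $uv$, so $uv=0$, and (2) follows.

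The hardest part of this argument is the bookkeeping in identifying $\mathrm{im}\,\theta_{iji}$ with $e_iRe_jRe_i$ and in choosing the correct ``test matrices'' for the converse in (2)$\Leftrightarrow$(4); there is no conceptual obstacle. Everything else is transparent once one remembers that $R^\pi$ is, via the canonical isomorphism $h$, the same ring as $R$ with its Peirce decomposition written as a matrix, so multiplication in $R^\pi$ is genuinely just row-column multiplication of the Peirce components.
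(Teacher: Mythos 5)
Your proposal is correct and follows essentially the same route as the paper: the equivalence of (1), (2), (3) is obtained from the definition of $\mathcal{T}_n$ together with Lemma 2.1(1), and condition (4) is handled by the same diagonal-entry computation (the paper phrases it as computing the $(i,i)$- and $(i,j)$-positions of $\mathcal{D}(R^\pi)^- R^\pi$, which is exactly your sum $\sum_{k\neq i} r_{ik}s_{ki}$ together with your test-matrix argument for the converse). The only cosmetic difference is that the paper links (4) to (3) directly rather than to (2), which is immaterial given Lemma 2.1.
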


\begin{proof} (3) $\Leftrightarrow$ (4) Let $X = \mathcal{D}(R^\pi)^-$. Observe that the $(i,i)$-position of $XR^\pi$ is $\sum_{k\neq i} e_i R_kRe_i = e_i R(1-e_i) Re_i$; and the $(i,j)$-position of $XR^\pi$ is $\sum_{k\neq i, \ k\neq j} e_i Re_kRe_j \subseteq e_iRe_j$.  Therefore $XR^\pi \subseteq X$ if and only if each $e_i \in \mathfrak{P}_{\rm it}(R)$. 
\end{proof}

Observe that from Lemma 1.4 and Theorem 2.2, any property that is preserved by a surjective ring homomorphism passes from a ring in $\mathcal{T}_n$ to its diagonal rings.

\begin{corollary}
 Let $R$ be an $n$-by-$n$ generalized matrix ring. Then the following conditions are equivalent:
 
\begin{enumerate}
	\item $R \in \mathcal{T}_n$.
	\item Let $[a_{ij}], [b_{ij}] \in R$ with $[c_{ij}] = [a_{ij}] [b_{ij}]$. Then $c_{ii} = a_{ii}b_{ii}$ for all $i$ and $j$.
	\item $\{E_{ii} \in R \ \vert \ i = 1, \ldots, n\} \subseteq \mathfrak{P}_{\rm it}(R)$.
\end{enumerate}
\end{corollary}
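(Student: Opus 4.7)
The plan is to derive Corollary 2.3 essentially as a specialization of Theorem 2.2, supplemented by a short direct unpacking of row-column matrix multiplication for the equivalence of (1) and (2). First I would observe that in any $n$-by-$n$ generalized matrix ring $R$ the diagonal matrix units $\{E_{ii}\}_{i=1}^n$ form a complete set of orthogonal idempotents, and under the canonical identification $E_{ii}RE_{jj} \cong M_{ij}$ the Peirce ring $R^\pi$ is literally $R$ itself. Hence the condition $R \in \mathcal{T}_n$ is just the condition $R^\pi \in \mathcal{T}_n$, and Theorem~2.2 applied with this choice of idempotents produces at once the equivalence (1)$\Leftrightarrow$(3).

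For (1)$\Rightarrow$(2), I would compute directly: if $[c_{ij}] = [a_{ij}][b_{ij}]$, then by the row-column rule $c_{ii} = \sum_{k=1}^n a_{ik}b_{ki}$. Since $R \in \mathcal{T}_n$ forces $\theta_{iki} = 0$ whenever $k \neq i$, each term $a_{ik}b_{ki} = \theta_{iki}(a_{ik}\otimes b_{ki})$ with $k \neq i$ vanishes, leaving only $c_{ii} = a_{ii}b_{ii}$.

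Conversely, for (2)$\Rightarrow$(1), I would fix $i \neq j$ and arbitrary $a \in M_{ij}$, $b \in M_{ji}$, and form matrices $A, B \in R$ whose only nonzero entries are $a$ in the $(i,j)$-slot and $b$ in the $(j,i)$-slot, respectively. The $(i,i)$-entry of $AB$ is precisely $\theta_{iji}(a \otimes b)$, while by hypothesis it must equal $A_{ii}B_{ii} = 0$. Because $a$ and $b$ are arbitrary, $\theta_{iji} = 0$; as this holds for every $i \neq j$, we conclude $R \in \mathcal{T}_n$.

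There is no genuine obstacle here; the argument is a matter of unwinding definitions. The only point requiring care is keeping straight the distinction between the ``external'' generalized matrix ring $R$ and its ``internal'' Peirce form $R^\pi$, so that Theorem~2.2 can be invoked legitimately with $e_i = E_{ii}$ and the hypothesis $E_{ii}RE_{jj}RE_{ii} = 0$ from Theorem~2.2 translates transparently into the vanishing of the bimodule compositions $\theta_{iji}$.
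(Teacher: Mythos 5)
Your proposal is correct and matches the paper's intended argument: Corollary~2.3 is stated there as an immediate consequence of Theorem~2.2 applied to the complete set of orthogonal idempotents $\{E_{ii}\}_{i=1}^n$, under the canonical identification of $R$ with its Peirce form $R^\pi$, exactly as you do for (1)$\Leftrightarrow$(3). Your direct verification of (1)$\Leftrightarrow$(2) via row-column multiplication and single-entry matrices is the routine unwinding the paper leaves implicit, and it is carried out correctly.
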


Thus $\mathcal{T}_n$ is exactly the class of $n$-by-$n$ generalized matrix rings in which the diagonal entries of the product 
    of two matrices is completely determined by the corresponding entries of the diagonals of the factor matrices.
		
Note that the idempotents in a generalized matrix ring are not characterized. However, for $R\in \mathcal{T}_n$, $e=e^2\in R$ if and only if $e=[e_{ij}]$, where $e_{ii}=e_{ii}^2$ and $e_{ij}=\sum_{k=1}^n e_{ik}e_{kj}$ for $i\ne j$.

\begin{proposition}
Assume $\{e_1, \ldots, e_n\}$ is a complete set of orthogonal idempotents of $R$, and $X = \mathcal{D}(R^\pi)^-$.
\begin{enumerate}
	\item If $\{e_1, \ldots, e_n\} \subseteq \mathfrak{P}_{\rm it}(R)$, then $X^n =0$ and $\oplus_{i=1}^nR_i$ is a homomorphic image of $R^\pi$ with kernel $X$, where $R_i=e_iRe_i$.
	\item If $\{e_1, \ldots, e_n\} \subseteq \mathfrak{P}_{\rm t}(R)$, then $X^2=0$.
\end{enumerate}
\end{proposition}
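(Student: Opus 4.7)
The plan is to prove (1) by first building the stated surjective ring homomorphism from $R^\pi$ onto $\bigoplus_i R_i$ with kernel $X$, and then separately establishing $X^n=0$ via a pigeonhole argument on the path expansion of matrix products. Part (2) will then follow directly by computing $X^2$ using Lemma~2.1 and the extra outer-trivial hypothesis.

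For the homomorphism in (1), I would invoke Lemma~1.4(5): each inner Peirce trivial $e_i$ yields a surjective ring homomorphism $h_i\colon R\to R_i$ sending $x\mapsto e_i x e_i$. Combining coordinatewise gives $h=(h_1,\ldots,h_n)\colon R\to \bigoplus_{i=1}^n R_i$, which is a ring homomorphism because each component is (the inner Peirce triviality $e_i xy e_i = e_i x e_i y e_i$ is exactly what makes $h_i$ multiplicative), is surjective (with $\sum_i e_i x_i e_i$ a preimage of $(x_1,\ldots,x_n)$), and whose kernel viewed in $R^\pi$ is exactly the set of matrices with zero diagonal, namely $X$.

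For $X^n=0$, I would expand the $(i_0,i_n)$-entry of an arbitrary product $\alpha_1\cdots\alpha_n$ with $\alpha_k\in X$ as
$$\sum_{i_1,\ldots,i_{n-1}\in\{1,\ldots,n\}} (\alpha_1)_{i_0 i_1}(\alpha_2)_{i_1 i_2}\cdots(\alpha_n)_{i_{n-1} i_n}.$$
Each summand with $i_{k-1}=i_k$ already vanishes since $\alpha_k$ has zero diagonal. For the surviving summands the path $i_0,i_1,\ldots,i_n$ consists of $n+1$ entries drawn from $\{1,\ldots,n\}$, so pigeonhole yields indices $p<q$ with $i_p=i_q$, and the corresponding contribution lies in
$$e_{i_p}Re_{i_{p+1}}R\cdots Re_{i_{q-1}}Re_{i_p} \;\subseteq\; e_{i_p}Re_{i_{p+1}}Re_{i_p}\;=\;0,$$
where the inclusion absorbs the intermediate factors into $R$ and the final equality uses $i_{p+1}\neq i_p$ together with $e_{i_p}\in\mathfrak{P}_{\rm it}(R)$ via Lemma~2.1(1).

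For (2), $\{e_i\}\subseteq\mathfrak{P}_{\rm t}(R)$ gives both inner and outer Peirce triviality for every $e_i$. For $x,y\in X$, the $(i,j)$-entry of $xy$ is $\sum_{k\neq i,\,k\neq j} x_{ik}y_{kj}\in\sum_k e_iRe_kRe_j$; when $i=j$ each summand is killed by Lemma~2.1(1) applied to $e_i$, and when $i\neq j$ each summand is killed by Lemma~2.1(2) applied to $e_k$ (with $k\neq i$ and $k\neq j$). The main obstacle is the $X^n=0$ piece of part (1): naive estimates only give $X^2\subseteq X$, and nilpotence emerges only after recognising that the pigeonhole collision on the $n+1$ path indices, combined with the absorption of middle factors into $R$, reduces any forced cycle to the length-two case that inner Peirce triviality directly kills.
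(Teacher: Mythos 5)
Your proposal is correct and follows essentially the same route as the paper: the homomorphism in (1) is built coordinatewise from Lemma~1.4(5) (the paper additionally cites Theorem~2.2), the nilpotence $X^n=0$ is the paper's path-expansion argument with the pigeonhole repetition made explicit and killed by Lemma~2.1(1), and part (2) is the paper's appeal to Lemma~2.1(2), which you supplement harmlessly with Lemma~2.1(1) for the diagonal entries. Your write-up is in fact a more detailed rendering of the paper's rather terse proof.
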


\begin{proof} (1) Observe that the $(i,j)$-position of $X^{n-1}$ is a sum of terms where each term is an element of $e_i Re_{\alpha_1}Re_{\alpha_2}R \cdots e_{\alpha_{n-2}}Re_j$ where $\alpha_k \in \{1, \ldots, n\} - \{i, j\}$. Since $\{e_1, \ldots, e_n\} \subseteq \mathfrak{P}_{\rm it}(R)$, it follows that $X^n =0$. The second part follows from Lemma 1.4(5) and Theorem~2.2.
	
	(2) This part follows from Lemma 2.1(2).
\end{proof}

\begin{example}
Let $A$ be a ring and $X, Y \trianglelefteq A$ such that $X^2 \subseteq Y$. Take
$$R = \left[ \begin{array}{ccc} A & X & Y \\
X & A & X\\
Y & X & A \end{array}\right].$$
Then routine calculation yields:
\begin{enumerate}
	\item $E_{22} \in \mathfrak{P}_{\rm t}(R)$ if and only if $X^2=0$.
	\item $\{E_{11}, E_{22}, E_{33} \} \subseteq \mathfrak{P}_{\rm it}(R)$ if and only if $X^2 =0 =Y^2$.
	\item $\{E_{11}, E_{22}, E_{33} \} \subseteq \mathfrak{P}_{\rm t}(R)$ if and only if $X^2 =0 =Y^2$ and $XY =0=YX$.
\end{enumerate}

For an illustration of (2) and (3), let $B$ be a ring and $A = B[x,y]/(x^2, y^2)$ and $A = B[x,y]/(x^2, y^2, xy)$, respectively.
\end{example}

\smallskip

The next three results (2.6 - 2.8) indicate a transfer of important ring properties between~$\mathcal{D}(R)$ and $R\in\mathcal{T}_n$.  

\begin{corollary}
Let $R \in \mathcal{T}_n \ (n > 1)$. Then $\mathcal{D}(R)$ satisfies each of the following conditions if and only if $R$ does so:

\begin{enumerate}
	\item semilocal,
	\item semiperfect,
	\item left (or right) perfect,
	\item semiprimary,
	\item bounded index (of nilpotence).
\end{enumerate}
\end{corollary}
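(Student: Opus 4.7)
The plan is to exploit Proposition~2.4: for $R \in \mathcal{T}_n$ the set $X := \mathcal{D}(R)^-$ is a two-sided ideal with $X^n = 0$, and the quotient $R/X$ is ring-isomorphic to $\mathcal{D}(R)$. In particular $X$ is a nilpotent ideal, so $X \subseteq \mathcal{J}(R) \cap \mathcal{P}(R)$, and the projection $R \twoheadrightarrow \mathcal{D}(R)$ identifies $\mathcal{J}(R)/X$ with $\mathcal{J}(\mathcal{D}(R))$ and $R/\mathcal{J}(R)$ with $\mathcal{D}(R)/\mathcal{J}(\mathcal{D}(R))$. Once this is in hand, the corollary becomes a sequence of standard ``lift over a nilpotent ideal'' arguments.

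Each of the five properties is preserved by homomorphic images, so the forward direction (from $R$ to its factor $\mathcal{D}(R)$) is automatic in every case. For the converse direction, I plan to handle the items one by one using the quotient $R/X \cong \mathcal{D}(R)$. For (1), semilocality is a property of $R/\mathcal{J}(R)$, which is isomorphic to $\mathcal{D}(R)/\mathcal{J}(\mathcal{D}(R))$, so the two rings are semilocal together. For (2), semiperfectness is semilocality plus lifting of idempotents modulo $\mathcal{J}(R)$; since $X$ is nilpotent (hence nil), Bass' classical theorem lifts idempotents from $R/X = \mathcal{D}(R)$ to $R$, and combined with (1) this gives the equivalence. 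For (3) and (4), recall that $R$ is left perfect (respectively semiprimary) iff $R/\mathcal{J}(R)$ is semisimple Artinian and $\mathcal{J}(R)$ is left $T$-nilpotent (respectively nilpotent); since $X$ is nilpotent, $\mathcal{J}(R)$ has the relevant finiteness property iff $\mathcal{J}(R)/X = \mathcal{J}(\mathcal{D}(R))$ does, completing the equivalence via (1).

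For (5), if $\mathcal{D}(R)$ has bounded index of nilpotence $m$, then for any nilpotent $a \in R$, the image $a + X$ satisfies $(a+X)^m = 0$, so $a^m \in X$, and since $X^n = 0$ we get $a^{mn} = 0$; thus $R$ has bounded index at most $mn$. The reverse direction is again immediate from the quotient.

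The only mildly delicate step is (2), where one must invoke idempotent lifting modulo the nil ideal $X$ rather than merely modulo $\mathcal{J}(R)$; everything else is a direct application of the corresponding ``Jacobson radical'' characterisation combined with the nilpotence of $X$ supplied by Proposition~2.4.
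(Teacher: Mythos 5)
Your converse directions (from $\mathcal{D}(R)$ to $R$) are correct and are essentially the paper's own proof: Proposition~2.4 supplies the nilpotent ideal $X = \mathcal{D}(R)^-$ with $R/X \cong \mathcal{D}(R)$, hence $X \subseteq \mathcal{J}(R)$, $\mathcal{J}(\mathcal{D}(R)) = \mathcal{J}(R)/X$ and $R/\mathcal{J}(R) \cong \mathcal{D}(R)/\mathcal{J}(\mathcal{D}(R))$; items (1)--(4) follow from the radical characterizations (your two-step idempotent lifting for (2) and the extension argument for nilpotence/T-nilpotence in (3), (4) are fine), and your computation $a^m \in X \Rightarrow a^{mn} = 0$ for (5) is exactly the paper's.

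The gap is in your forward direction. You open with the blanket claim that ``each of the five properties is preserved by homomorphic images,'' and this is false for (5): bounded index of nilpotence does not pass to quotients in general, because a nilpotent element of $R/I$ need not lift to a nilpotent element of $R$. Concretely, $\Z[x_1, x_2, x_3, \ldots]$ is a domain, hence reduced of index $1$, yet its quotient by the monomial ideal generated by $\{x_k^{k+1} \mid k \geq 1\}$ contains nilpotent elements $x_k$ of index $k+1$ for every $k$, so that quotient has no bounded index at all. Thus your forward direction for (5) does not stand as written. The repair is immediate inside your own framework, but it must be said: since the kernel $X$ is nilpotent, any $a \in R$ whose image in $R/X$ is nilpotent satisfies $a^m \in X$ for some $m$, hence $a^{mn} = 0$, so $a$ is itself nilpotent; consequently, if $R$ has bounded index $k$, then $a^k = 0$ and the image of $a$ has index at most $k$, so $\mathcal{D}(R)$ has bounded index at most $k$. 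Note that the paper sidesteps this issue entirely by running the forward direction through subrings rather than quotients: each of (1)--(5) passes from $R$ to every corner ring $eRe$, and for bounded index this is trivial because a nilpotent element of $eRe$ (or of the subring $\mathcal{D}(R)$ itself) is already a nilpotent element of $R$, with the same index. That observation handles all five items uniformly and avoids any lifting question.
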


\begin{proof}
Observe that if $R$ satisfies any of (1) - (5), then so does $eRe$ for each $e=e^2 \in R$. Hence, if $R$ satisfies any of (1) - (5), then so does $\mathcal{D}(R)$. 

Conversely, first assume that $\mathcal{D}(R)$ is semilocal. From Proposition 2.4(1) we have that $\mathcal{D}(R)^- \trianglelefteq R,  \ R/(\mathcal{D}(R)^-) \cong \mathcal{D}(R)$ and $\mathcal{D}(R)^- \subseteq \mathcal{J}(R)$. Therefore, $R/\mathcal{J}(R) \cong (R/(\mathcal{D}(R)^-))/(\mathcal{J}(R)/(\mathcal{D}(R)^-)) \cong \mathcal{D}(R)/\mathcal{J}(\mathcal{D}(R))$ is semisimple artinian. Thus $R$ is semilocal. Parts (2) - (4) are proved similarly. Now assume $\mathcal{D}(R)$ has bounded index $k$, and let $v$ be a nilpotent element of $R$. Then $v = d + x$ where $d \in \mathcal{D}(R)$ and $x \in \mathcal{D}(R)^-$ and $v^m =0$. Then $0 = v^m = d^m+y$ where $y \in \mathcal{D}(R)^-$. So $d^m =-y \in \mathcal{D}(R)^-$. Hence $d$ is nilpotent, so $d^k =0$. Then $v^k = d^k +w = w \in \mathcal{D}(R)^-$. Hence $v^{kn} =0$. Thus $R$ has bounded index less than or equal to $kn$.
\end{proof}

Corollary 2.6(3) extends [TLZ, Corollary 3.8]. In [ABP], the authors determine several generalizations of the condition that a ring satisfies a polynomial identity. With these generalizations they were able to extend classical theorems by Armendariz and Steenberg, Fisher, Kaplansky, Martindale, Posner and Rowen. Two of these generalizations are: (1) a ring $R$ is an {\it almost PI-ring} if every prime factor ring of $R$ is a PI-ring; (2) $R$ is an {\it instrinsically PI-ring} if every nonzero ideal contains a nonzero PI-ideal of $R$.

\begin{corollary}
Let $R \in \mathcal{T}_n \ (n> 1)$. Then:

\begin{enumerate}
	\item $\mathcal{D}(R)$ satisfies a PI if and only if $R$ does so.
	\item If $\mathcal{D}(R)$ is commutative, then $R$ satisfies $(xy - yx)^n =0$ for all $x, y \in R$.
	\item $\mathcal{D}(R)$ is almost PI if and only if $R$ is almost PI.
	\item If $\mathcal{D}(R)$ is intrinsically PI, then $R$ is instrinsically PI.
\end{enumerate}
\end{corollary}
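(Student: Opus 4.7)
The plan is to exploit Proposition~2.4(1), which for $R \in \mathcal{T}_n$ identifies $N := \mathcal{D}(R)^-$ as a nilpotent ideal with $N^n = 0$ and provides a ring isomorphism $R/N \cong \mathcal{D}(R)$. Let $\varphi \colon R \to \mathcal{D}(R)$ be the induced quotient map. Since $\mathcal{D}(R)$ also sits inside $R$ as the diagonal subring, this realizes $R$ as a nilpotent-by-$\mathcal{D}(R)$ extension, and this single observation drives all four parts.

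For (1), the ``only if'' direction is immediate because $\mathcal{D}(R)$ is a subring of $R$, so any PI of $R$ restricts. For the converse, if $\mathcal{D}(R)$ satisfies a multilinear identity $f(x_1,\dots,x_k)=0$, then $f(r_1,\dots,r_k) \in N$ for all $r_i \in R$, so the product $\prod_{j=1}^{n} f(x_1^{(j)},\dots,x_k^{(j)})$ over $n$ disjoint variable sets lies in $N^n = 0$, giving a multilinear PI on $R$. For (2), commutativity of $\mathcal{D}(R)$ forces $\varphi(xy-yx)=0$, so $xy-yx \in N$ and $(xy-yx)^n \in N^n = 0$.

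For (3) I would invoke Theorem~1.16(2), which transfers between $\mathcal{D}(R)$ and $R$ any property $\mathcal{C}$ satisfying ``$A$ has $\mathcal{C}$ iff every prime factor of $A$ does''. The only thing to verify is that ``almost PI'' qualifies; this reduces to noting that a prime ring $P$ is almost PI iff it is PI (since $\{0\}$ is a prime ideal of $P$, so $P$ is one of its own prime factors), whence the conditions ``every prime factor is PI'' and ``every prime factor is almost PI'' coincide for every ring.

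For (4), given a nonzero ideal $I \trianglelefteq R$, I would split on whether $I \subseteq N$. If $I \subseteq N$, then $I^n = 0$ and $I$ itself satisfies the multilinear identity $x_1 \cdots x_n = 0$, so $I$ is already a nonzero PI-ideal of $R$ inside $I$. Otherwise $\varphi(I)$ is a nonzero ideal of $\mathcal{D}(R)$; intrinsic PI-ness of $\mathcal{D}(R)$ yields a nonzero PI-ideal $\bar J \subseteq \varphi(I)$, and I would set $J = \varphi^{-1}(\bar J) \cap I$. Then $J$ is a nonzero ideal of $R$ in $I$ with $J/(J \cap N) \cong \bar J$ PI and $J \cap N$ nilpotent, so the extension argument from (1) shows that $J$ is PI. The one non-routine technical ingredient is this nilpotent-by-PI extension fact used in both (1) and (4); it is standard but is the only substantive step beyond the Peirce-trivial machinery already in place.
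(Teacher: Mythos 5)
Your proof is correct, and it matches the paper's argument on parts (1) and (2): the paper observes that $\mathcal{D}(R)$ inherits any PI of $R$ as a subring and, conversely, that $R$ satisfies $p^n$ when $\mathcal{D}(R)$ satisfies the PI $p$ (since evaluations of $p$ on $R$ land in the nilpotent ideal $\mathcal{D}(R)^-$); your disjoint-variables product $\prod_{j=1}^{n} f(x_1^{(j)},\dots,x_k^{(j)})$ is the same nilpotent-extension idea, and if anything is more careful about producing a polynomial that still has a unit coefficient. Where you genuinely diverge is in (3) and (4): after invoking Proposition 2.4, the paper simply cites [ABP, Theorem 1.6(i)] for (3) and [ABP, Theorem 1.6(ii)] for (4), i.e., it outsources the passage of ``almost PI'' and ``intrinsically PI'' through the nilpotent ideal $\mathcal{D}(R)^-$ to an external result. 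You instead derive both from machinery internal to the paper: for (3) you apply Theorem 1.16(2) after checking that ``almost PI'' is a prime-factor-determined property (via the observation that a prime ring is almost PI if and only if it is PI, so ``every prime factor is almost PI'' and ``every prime factor is PI'' coincide), and for (4) you give a direct argument, splitting on whether $I \subseteq \mathcal{D}(R)^-$ and otherwise pulling back a PI-ideal via $J = \varphi^{-1}(\bar J)\cap I$ (where indeed $\varphi(J)=\bar J$ because $\bar J \subseteq \varphi(I)$), finishing with the nilpotent-by-PI lemma. Both routes are valid: the paper's is shorter because it leans on [ABP], while yours is self-contained and, in the case of (3), is a nice illustration that the prime-factor transfer theory of Section 1 (Theorems 1.13--1.16) already suffices for this transfer without any outside input.
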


\begin{proof} (1) Since subrings of a PI-ring are PI-rings, if $R$ is a PI-ring then so is~$\mathcal{D}(R)$. Conversely, assume $\mathcal{D}(R)$ is a PI-ring which satisfies the PI  $p$. Then $R$ satisfies~$p^n$.
	
	(2) This part follows from (1).
	
	(3) and (4). By Proposition 2.4 we have $\mathcal{D}(R)^- \trianglelefteq R, \ R/(\mathcal{D}(R)^-) \cong \mathcal{D}(R)$ and $(\mathcal{D}(R)^-)^n =0$. Now (3) follows from [ABP, Theorem 1.6(i)], and (4) follows from [ABP, Theorem 1.6(ii)].
\end{proof}

Let $R$ be an $n$-by-$n$ generalized matrix ring, and let 

\medskip

\centerline{UT$(R)$ and  LT$(R)$}

\medskip

\noindent be the $n$-by-$n$ upper and lower generalized triangular matrix rings, respectively, formed from $R$. Our next result shows that elements of $\mathcal{T}_n$ are subdirect products of generalized triangular matrix rings.

\begin{proposition}
Let $R \in \mathcal{T}_n \ (n> 1)$. Then there is a ring monomorphism \hfill\break $\psi: R \rightarrow {\rm UT}(R) \times {\rm LT}(R)$ such that $R$ is a subdirect product of UT$(R)$ and~LT$(R)$.
\end{proposition}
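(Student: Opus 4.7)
I plan to construct $\psi$ by the natural ``split into triangular parts'' map, defined on $A=[a_{ij}]\in R$ by $\psi(A)=(U(A),L(A))$, where $U(A)$ is the upper triangular matrix whose $(i,j)$-entry is $a_{ij}$ for $i\le j$ and $0$ for $i>j$, and $L(A)$ is defined symmetrically. Additivity is immediate, and injectivity is transparent because each diagonal entry of $A$ is recorded in both triangular parts while each off-diagonal entry appears in exactly one, so $\psi(A)=0$ forces $A=0$. The subdirect-product conclusion reduces at once to surjectivity of the two projections: given $X=[x_{ij}]\in {\rm UT}(R)$, the element $A\in R$ whose upper entries coincide with those of $X$ and whose strictly lower entries vanish satisfies $\pi_U\psi(A)=X$, with the symmetric choice for ${\rm LT}(R)$.

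The central task is multiplicativity $\psi(AB)=\psi(A)\psi(B)$. For $i\le j$ the $R$-computation $(AB)_{ij}=\sum_{k=1}^n a_{ik}b_{kj}$ differs from the ${\rm UT}(R)$-computation $(U(A)U(B))_{ij}=\sum_{i\le k\le j}a_{ik}b_{kj}$ only by the out-of-range summands $\theta_{ikj}(a_{ik}\otimes b_{kj})$ with $k<i$ or $k>j$, so it suffices to show these vanish (with the analogous statement for $L$). The diagonal case $i=j$ is handled at once by the defining $\mathcal{T}_n$ hypothesis $\theta_{iki}=0$ for every $k\ne i$, which is also the content of Corollary~2.3; the extra terms $\sum_{k\ne i}\theta_{iki}(a_{ik}\otimes b_{ki})$ immediately collapse to zero.

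The strictly upper case $i<j$ is the main obstacle. Here I must exploit both the $\mathcal{T}_n$ condition and the associativity constraints among the structure maps $\theta_{ijk}$ in $R$: given an offending summand $\theta_{ikj}(a_{ik}\otimes b_{kj})$ with $k\notin[i,j]$, the strategy is to pair it, via the associativity identity, with an auxiliary factor that forces the multiplication to ``return'' to a diagonal position, thereby producing a factor of $\theta_{iji}$-type or $\theta_{kjk}$-type, which vanishes by $\mathcal{T}_n$. The strictly lower case follows by the symmetric argument, completing the proof that $\psi$ is a ring monomorphism and hence that $R$ is a subdirect product of ${\rm UT}(R)$ and ${\rm LT}(R)$.
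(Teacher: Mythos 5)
Your map $\psi$ is exactly the one the paper uses, and your treatment of additivity, injectivity, surjectivity of the two projections, and the diagonal entries of products is correct. The fatal problem is the step you yourself flag as ``the main obstacle.'' No associativity argument can make the out-of-range summands $\theta_{ikj}(a_{ik}\otimes b_{kj})$ (with $i<j$ and $k<i$ or $k>j$) vanish, because they do not vanish in general: membership in $\mathcal{T}_n$ forces only $\theta_{iki}=0$ for $k\neq i$ and imposes no condition on $\theta_{ikj}$ with $i,k,j$ pairwise distinct, even when $k$ lies outside the interval between $i$ and $j$. The paper's own Example 3.9 is a counterexample: there $R\in\mathcal{T}_3$ with $M_{13}=X$, $M_{32}=X$, $M_{12}=X^2$, and the product $M_{13}M_{32}=X^2\neq 0$. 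Take $a$ to be the matrix with $x\in X$ in the $(1,3)$ position and zeros elsewhere, and $b$ the matrix with $y\in X$ in the $(3,2)$ position and zeros elsewhere, where $xy\neq 0$. Then $ab$ is the matrix with $xy$ in the $(1,2)$ position, so its upper truncation is nonzero, while the upper truncation of $b$ is $0$; hence $\pi_{{\rm UT}}\psi(ab)\neq \pi_{{\rm UT}}\psi(a)\,\pi_{{\rm UT}}\psi(b)$. Your pairing strategy can only show that certain \emph{products} of an offending term with auxiliary elements are zero (indeed, multiplying $m_{ik}b_{kj}$ back into a diagonal position always gives $0$ by condition (I)), but it can never force the term itself to be zero, and in this example it is not.

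This gap cannot be repaired while keeping the map: $\psi$ is multiplicative if and only if $\theta_{ikj}=0$ whenever $k$ lies outside the closed interval with endpoints $i$ and $j$, a condition strictly stronger than (I) once $n\geq 3$ (it is implied by condition (III) of the introduction, and coincides with (I) when $n=2$, which is why the Morita-context case is unproblematic). You should also know that the paper's proof of Proposition 2.8 is this very same map, with the verification dismissed as ``a routine argument,'' so what you have hit is a genuine error in the paper rather than a defect of your own route: for $R$ as in Example 3.9 the proposed $\psi$ is not a ring homomorphism, and in fact no homomorphism $R\rightarrow {\rm UT}(R)$ can have the strictly lower part as its kernel, since $ab=xyE_{12}$ above shows the strictly lower part is not even a left ideal of $R$. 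The proposition as stated is safe for $n=2$, or for all $n$ under the stronger trivialization (III); for $\mathcal{T}_n$ with $n\geq 3$ any rescue would require an essentially different embedding, not a completion of this computation.
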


\begin{proof}
Let $[m_{ij}] \in R$. Define $\psi ([m_{ij}]) = ([a_{ij}], [b_{ij}])$ where
$$a_{ij} = \left\{ \begin{array}{ll} m_{ij}, & \mbox{for} \ j \geq i\\
0, & \mbox{elsewhere}, \end{array}\right.  \quad \mbox{and}$$

$$b_{ij} = \left\{ \begin{array}{ll} m_{ij}, & \mbox{for} \ i \geq j \\
0, & \mbox{elsewhere} . \end{array}\right.$$
A routine argument yields that $\psi$ is a ring monomorphism and that $R$ is a subdirect product of UT$(R)$ and LT$(R)$.
\end{proof}

\begin{definition}
Let $R$ be an $n$-by-$n$ generalized matrix ring. Let $R^{\rm la}$ denote the {\it lower annihilating} subring 

$$\left[ \begin{array}{cccc}
R_1 & M_{12} & \cdots & M_{1n}\\
\\
\ul{r}_{M_{21}}(M_{12}) \cap \ul{\ell}_{M_{21}}(M_{12}) & R_{2} & \ddots & \vdots \\

\\
\vdots & \ddots & \ddots & M_{n-1,n} \\
\\
\ul{r}_{M_{n1}}(M_{1n}) \cap \ul{\ell}_{M_{n1}}(M_{1n})  & \cdots & \ul{r}_{M_{n,n-1}}(M_{n-1,n}) \cap \ul{\ell}_{M_{n,n-1}}(M_{n-1,n}) & R_n \end{array}\right]$$

\smallskip

\noindent of $R$, and let $R^{\rm ua}$ denote the {\it upper annihilating} subring 

$$\left[ \begin{array}{cccc}
R_1 & \ul{r}_{M_{12}}(M_{21}) \cap \ul{\ell}_{M_{12}}(M_{21})  &  \cdots &  \ul{r}_{M_{1n}}(M_{n1}) \cap \ul{\ell}_{M_{1n}}(M_{n1})         \\
\\
 M_{21} & R_{2} & \ddots & \vdots \\
\\
\vdots & \ddots & \ddots &  \ul{r}_{M_{n-1,n}}(M_{n,n-1}) \cap \ul{\ell}_{M_{n-1,n}}(M_{n,n-1})      \\
\\
  M_{n1}   & \cdots & M_{n,n-1} & R_n \end{array}\right]$$
	
\smallskip	

\noindent of $R$.
\end{definition}

Note that $R^{\rm la}$ and $R^{\rm ua}$ are subrings of both $R$ and $\ol{R}$. Moreover, if $R$ is the $n$-by-$n$ matrix ring over  a ring $A$, then $R^{\rm la}$ and $R^{\rm ua}$ are the $n$-by-$n$ upper and lower triangular matrix rings over $A$, respectively.

\begin{example}
Let $A$ and $B$ be rings. Let $R = \left[ \begin{array}{cc} A \times B & A \times \{0\} \\
A \times B & A \times B \end{array} \right]$. Then $R^{\rm la} = \left[ \begin{array}{cc} A \times B & A \times \{0\} \\
\{0\} \times B & A \times B \end{array}\right]$, and $R^{\rm ua} = \left[ \begin{array}{cc} A \times B & \{0\} \\
A \times B & A \times B \end{array}\right]$.
\end{example}

\begin{lemma}
Let $R$ be an $n$-by-$n$ generalized matrix ring. Then
${\rm Cen}(R) = {\rm Cen}(\ol{R}) = \{[c_{ij}] \in R \ \vert \ c_{ij} =0 \ \mbox{if} \ i \neq j, \ c_{ii} \in {\rm Cen}(R_i) \ \mbox{for all} \ i, \ \mbox{and} \ c_{ii}m_{ij} = m_{ij}c_{jj} \ \mbox{for all} \ m_{ij} \in M_{ij} \ \mbox{if} \ i \neq j\}.$
\end{lemma}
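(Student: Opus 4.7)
The plan is to establish both containments for ${\rm Cen}(R)$ with the described set by testing against matrix units and single-entry matrices, and then to observe that the entire computation is insensitive to the homomorphisms $\theta_{iji}$ which distinguish $R$ from $\ol{R}$.

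For the containment ``$\subseteq$'', suppose $c = [c_{ij}] \in {\rm Cen}(R)$. First I would commute $c$ with each diagonal matrix unit $E_{kk}$ (having $1 \in R_k$ in position $(k,k)$ and zeros elsewhere). Since $E_{kk}c$ has $k$-th row equal to the $k$-th row of $c$ (all other rows zero), while $cE_{kk}$ has $k$-th column equal to the $k$-th column of $c$ (all other columns zero), the equality $E_{kk}c = cE_{kk}$ forces $c_{kj}=0$ for $j\neq k$ and $c_{ik}=0$ for $i\neq k$. Ranging over $k$ shows that $c$ is diagonal. Next, for each $i$ and any $a\in R_i$, testing against the matrix having $a$ in the $(i,i)$-slot and zeros elsewhere yields $c_{ii}a = ac_{ii}$, so $c_{ii} \in {\rm Cen}(R_i)$. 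Finally, for $i\neq j$ and any $m\in M_{ij}$, testing against the matrix with $m$ in the $(i,j)$-slot gives, by comparing $(i,j)$-entries of $cx$ and $xc$, the relation $c_{ii}m = mc_{jj}$.

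For the containment ``$\supseteq$'', I would take any $c$ of the described form and any $x=[x_{ij}]\in R$, and compute entrywise. Since $c$ is diagonal, the sums defining matrix multiplication collapse: $(cx)_{ij} = c_{ii}x_{ij}$ and $(xc)_{ij} = x_{ij}c_{jj}$. On the diagonal these agree by $c_{ii}\in{\rm Cen}(R_i)$; off the diagonal they agree by the hypothesis $c_{ii}m_{ij} = m_{ij}c_{jj}$. Hence $c\in{\rm Cen}(R)$.

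To obtain ${\rm Cen}(R) = {\rm Cen}(\ol{R})$, I would note that every product appearing in the arguments above is either a product inside some $R_i$ (that is, $\theta_{iii}$) or one of the bimodule scalar actions $\theta_{iij}$ and $\theta_{ijj}$. The homomorphisms $\theta_{iji}$ with $i\neq j$ (precisely those zeroed out in passing from $R$ to $\ol{R}$), as well as the mixed $\theta_{ijk}$ with $i,j,k$ pairwise distinct, never enter the computation because centrality forces the element to be diagonal. Consequently the description of the center is the same whether it is computed in $R$ or in $\ol{R}$. The only obstacle here is careful matrix bookkeeping; there is no conceptual hurdle, and the content of the lemma is exactly that the center of a generalized matrix ring depends only on the diagonal rings, the off-diagonal bimodules, and the left/right scalar actions between them.
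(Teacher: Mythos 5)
Your proof is correct, and it takes a genuinely different route from the paper's. The paper's own proof is a two-line sketch: verify the statement for $n=2$ by direct computation, then handle general $n$ by partitioning the matrix ring into a $2$-by-$2$ block form and inducting. You instead do the general case in one pass: commuting with the diagonal matrix units $E_{kk}$ forces a central element to be diagonal, commuting with $aE_{ii}$ and with $mE_{ij}$ extracts the conditions $c_{ii} \in {\rm Cen}(R_i)$ and $c_{ii}m = mc_{jj}$, and the reverse containment is a collapse-the-sums entrywise check. This buys you two things. First, you avoid the bookkeeping hidden in the paper's inductive step: one must check that a $2$-by-$2$ blocking of an $n$-by-$n$ generalized matrix ring is again a generalized matrix ring (with block-diagonal rings that are themselves generalized matrix rings and block bimodules that are direct sums of the $M_{ij}$), and then unwind the $2$-by-$2$ center description into the finer $n$-fold one. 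Second, your closing observation --- that every product occurring in the argument is a $\theta_{iii}$, $\theta_{iij}$, or $\theta_{ijj}$, and never a $\theta_{iji}$ or a mixed $\theta_{ijk}$, because centrality forces diagonality before any such product could arise --- gives an explicit, self-contained reason why ${\rm Cen}(R) = {\rm Cen}(\ol{R})$, a point the paper's sketch leaves entirely implicit. What the paper's approach buys in exchange is brevity and reuse of the $n=2$ computation as a black box; substantively, both arguments rest on the same entrywise comparisons.
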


\begin{proof}
First show the result holds when $n=2$. For the $n$-by-$n$ case, block the matrix ring into a $2$-by-$2$ generalized matrix ring and use induction.
\end{proof}

Let $S$ be a subring of a ring $R$. It is well known (see [W, p.~26]) that the $(S,S)$-bimodule structure of $S$ and $R$ is equivalent to the right $T$-module structure of~$S$ and $R$, respectively, where $T = S^{\rm op} \otimes_{\Z} S$, with $S^{\rm op}$ denoting the opposite ring of~$S$.

The next three results (2.12 - 2.14) indicate the transfer of significant information between an $n$-by-$n$ generalized matrix ring and certain subrings which are maximal with respect to being in $\mathcal{T}_n$.

In particular, Theorem 2.12 shows that for any ring $R$ with a complete set of orthogonal idempotents $\{e_i\}^n_{i=1} \  (n> 1)$ there are subrings $S$ containing $\{e_i\}^n_{i=1}$ which are maximal with respect to $S^\pi$ being in~$\mathcal{T}_n$ and $S_T$ is right essential in~$R_T$, where $T = S^{\rm op} \otimes_{\Z}S$. Moreover, this result and its consequences provide a connection between the structure of an arbitrary generalized matrix ring and the structure of rings in $\mathcal{T}_n$ (see Question B in the introduction).

\begin{theorem}
Let $R$ be an $n$-by-$n$ generalized matrix ring, and $S$ denotes either~$R^{\rm la}$ or $R^{\rm ua}$.

\begin{enumerate}
	\item $S$ is a subring of $R$ maximal with respect to being in $\mathcal{T}_n$.
	\item Let $0 \neq y \in R$. Then either $0 \neq syE_{jj} \in S$ or $0 \neq E_{ii} y t \in S$ for some $s, t, E_{ii}, E_{jj} \in S$.
	\item Every nonzero $(S,S)$-bisubmodule of $R$ has nonzero intersection with $S$. Thus every nonzero ideal of $R$ has nonzero intersection with $S$, and $S_T$ is right essential in $R_T$ where $T = S^{\rm op} \otimes_\Z S$.
	\item ${\rm Cen}(R) = {\rm Cen}(R^{\rm la}) \cap {\rm Cen}(R^{\rm ua})     \subseteq {\rm Cen}(\mathcal{D}(R))$.
	\item $\begin{array}{lll} {\rm U}(\ol{R}) & = & \{u+x \ \vert \  \ u \in {\rm U}(\mathcal{D}(R)) \ \mbox{and} \ x \in \mathcal{D}(\ol{R})^- \}, \ \mbox{and}\\
	{\rm U}(S) & = & \{u +y \ \vert \ u \in {\rm U}(\mathcal{D}(R)) \ \mbox{and} \ y \in \mathcal{D}(S)^-\} \subseteq {\rm U}(R).\end{array}$
\end{enumerate}
\end{theorem}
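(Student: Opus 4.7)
The plan is to establish the five parts in sequence, treating $S=R^{\rm la}$ in detail (the case $S=R^{\rm ua}$ being entirely symmetric). For (1), verifying that $S$ is a subring is a position-by-position inspection: closure under addition and the presence of $1_R=\sum_i E_{ii}$ are evident, so the content is closure under matrix multiplication. Contributions to diagonal or strictly upper-triangular positions of a product are handled directly---at a diagonal position $(i,i)$ a cross-term with $k\neq i$ takes the form $\alpha\beta$ with one of $\alpha,\beta$ lying in a lower annihilator $\ul{r}_{M_{\ast}}(M_{\ast\ast})\cap\ul{\ell}_{M_{\ast}}(M_{\ast\ast})$ and the other in the paired bimodule, so the product vanishes. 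The substantive case is a contribution $\alpha\beta$ landing at a strictly lower position $(i,j)$; here one applies the associativity of the $\theta_{ijk}$ together with the annihilator conditions defining the lower entries of $S$ to show that $\alpha\beta$ still annihilates $M_{ji}$ from both sides and hence lies in $S_{ij}$. The case analysis is organized by the relative order of $k$ with respect to $i$ and $j$, with a single or double invocation of associativity in each case transporting the annihilator condition from one bimodule to a derived one. That $S\in\mathcal{T}_n$ is then automatic, since for $i\neq j$ one of $S_{ij}$, $S_{ji}$ lies in $\ul{r}\cap\ul{\ell}$ of its partner, making $\theta_{iji}$ vanish on $S_{ij}\otimes S_{ji}$. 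For maximality, if $S\subsetneq S'\subseteq R$ is a subring containing the $E_{ii}$'s with $S'\in\mathcal{T}_n$, some lower position $(i,j)$, $i>j$, has $S'_{ij}\supsetneq S_{ij}$; since $S'_{ji}\supseteq S_{ji}=M_{ji}$ gives $S'_{ji}=M_{ji}$, the $\mathcal{T}_n$ condition $S'_{ij}\cdot M_{ji}=0=M_{ji}\cdot S'_{ij}$ forces $S'_{ij}\subseteq\ul{r}_{M_{ij}}(M_{ji})\cap\ul{\ell}_{M_{ij}}(M_{ji})=S_{ij}$, a contradiction.

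For (2), pick a nonzero entry $y_{ij}$ of $y=[y_{ij}]$. If $y_{ij}\in S_{ij}$ (which automatically holds when $i\le j$), then $E_{ii}yE_{jj}$ is the matrix with $y_{ij}$ at $(i,j)$ and $0$ elsewhere, nonzero and in $S$. Otherwise $i>j$ and $y_{ij}\notin\ul{r}_{M_{ij}}(M_{ji})\cap\ul{\ell}_{M_{ij}}(M_{ji})$, so either $M_{ji}y_{ij}\neq 0$ or $y_{ij}M_{ji}\neq 0$. In the second case choose $m_{ji}\in M_{ji}$ with $y_{ij}m_{ji}\neq 0$ and let $t\in S$ be the matrix placing $m_{ji}$ at the upper-triangular position $(j,i)$ and zeros elsewhere; then $E_{ii}yt$ is the diagonal matrix with $y_{ij}m_{ji}$ at $(i,i)$, nonzero and in $\mathcal{D}(R)\subseteq S$. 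The first case is symmetric and yields an appropriate $s\in S$ producing $syE_{jj}\neq 0$ in $\mathcal{D}(R)\subseteq S$. Part (3) is then immediate: for a nonzero $(S,S)$-bisubmodule $N$ of $R$, pick $0\neq y\in N$ and apply (2); the witness $syE_{jj}$ or $E_{ii}yt$ lies in both $N$ (by $(S,S)$-closure) and $S$ (by construction), so $N\cap S\neq 0$. Nonzero ideals of $R$ are $(S,S)$-bisubmodules, and the standard translation from $(S,S)$-bimodule structure on $R$ to right $T$-module structure with $T=S^{\rm op}\otimes_\Z S$ converts this into right essentiality of $S_T$ in $R_T$.

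For (4), Lemma~2.11 applied to each of $R$, $R^{\rm la}$, and $R^{\rm ua}$ describes each center as the set of diagonal matrices $[c_{ii}]$ with $c_{ii}\in{\rm Cen}(R_i)$ satisfying $c_{ii}m_{ij}=m_{ij}c_{jj}$ for every $m_{ij}$ in the $(i,j)$-bimodule of that ring; for ${\rm Cen}(R^{\rm la})$ the constraint covers all of $M_{ij}$ when $i<j$ but only the annihilator when $i>j$, and for ${\rm Cen}(R^{\rm ua})$ the reverse, so intersecting recovers the constraint on every $m_{ij}\in M_{ij}$ for all $i\neq j$, which is exactly the characterization of ${\rm Cen}(R)$. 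The inclusion into ${\rm Cen}(\mathcal{D}(R))$ is immediate since the central elements are diagonal. For (5), both $\ol{R}$ and $S$ belong to $\mathcal{T}_n$ (the former by definition, the latter by (1)) and share the diagonal ring $\mathcal{D}(R)$; Proposition~2.4(1) then gives that $\mathcal{D}(\ol{R})^-\trianglelefteq\ol{R}$ and $\mathcal{D}(S)^-\trianglelefteq S$ are nilpotent with quotients isomorphic to $\mathcal{D}(R)$, and standard lifting of units modulo a nilpotent ideal yields that $r=u+x$ is a unit iff its diagonal image $u$ is a unit in $\mathcal{D}(R)$, giving the two displayed characterizations. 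The inclusion ${\rm U}(S)\subseteq{\rm U}(R)$ holds because $S$ and $R$ share the same identity, so any $S$-inverse of an element of $S$ remains its inverse in $R$. The main technical obstacle is the closure verification inside (1): the annihilator conditions defining the lower entries of $S$ transmit only indirectly to products landing at lower positions, and tracking which combinations of upper/lower/diagonal entries contribute which terms requires patient case-by-case invocation of the associativity of the $\theta_{ijk}$.
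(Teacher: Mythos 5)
Your parts (2)--(5) are correct and essentially coincide with the paper's own proofs: (2) is the same annihilator dichotomy (the paper likewise produces the witnesses $s$ or $t$ supported at the $(j,i)$-position), (3) is drawn as the same immediate consequence of (2), (4) is the same application of Lemma 2.11 to $R$, $R^{\rm la}$ and $R^{\rm ua}$, and (5) is the paper's units-modulo-nilpotent-ideal argument (the paper carries it out by hand rather than quoting the standard lifting fact). Your maximality argument in (1) is also equivalent to the paper's: where you argue that $S'_{ij}M_{ji}=0=M_{ji}S'_{ij}$ forces $S'_{ij}\subseteq S_{ij}$, the paper exhibits the nonzero products $y_{ij}k$ or $hy_{ij}$ explicitly.

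The genuine gap is exactly at the step you call the main technical obstacle: closure of $S=R^{\rm la}$ under multiplication at a lower position. Your transport of the annihilator conditions works for a term $a_{ik}b_{kj}$ landing at position $(i,j)$ with $i>j$ only when $k\in\{i,j\}$ or $j<k<i$. When $j<i<k$, the factor $a_{ik}\in M_{ik}$ is an unrestricted upper entry and $b_{kj}\in \ul{r}_{M_{kj}}(M_{jk})\cap\ul{\ell}_{M_{kj}}(M_{jk})$; associativity does give $M_{ji}(a_{ik}b_{kj})\subseteq (M_{ji}M_{ik})b_{kj}\subseteq M_{jk}b_{kj}=0$, but on the other side $(a_{ik}b_{kj})M_{ji}=a_{ik}(b_{kj}M_{ji})\subseteq a_{ik}M_{ki}$, and nothing controls this: the annihilator hypotheses on $b_{kj}$ involve $M_{jk}$ only, so $b_{kj}M_{ji}\subseteq M_{ki}$ is unrestricted, and $a_{ik}M_{ki}$ is an image of the unrestricted map $\theta_{iki}$. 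The symmetric case $k<j<i$ fails on the left side. This is not a repairable bookkeeping issue; the closure claim is false for $n\geq 3$. For example, let $R={\rm End}_{\Z}(M_1\oplus M_2\oplus M_3)$ with $M_1=\Z$, $M_2=\Z\oplus\Z_2$, $M_3=\Z_2$, viewed as a $3$-by-$3$ generalized matrix ring with $M_{ij}={\rm Hom}(M_j,M_i)$ and the $\theta$'s given by composition. Since $M_{13}={\rm Hom}(\Z_2,\Z)=0$, the $(3,1)$-entry of $R^{\rm la}$ is all of $M_{31}$. Let $a\in R^{\rm la}$ have as its only nonzero entry the inclusion $\Z_2\rightarrow\Z\oplus\Z_2$ at position $(2,3)$, and let $b\in R^{\rm la}$ have as its only nonzero entry the natural surjection $\Z\rightarrow\Z_2$ at position $(3,1)$. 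Then $ab$ is supported at $(2,1)$, where its entry is $n\mapsto(0,\bar n)$; composing with the projection $\Z\oplus\Z_2\rightarrow\Z$, which lies in $M_{12}$, gives a nonzero element of $R_2$, so $(ab)_{21}\notin\ul{\ell}_{M_{21}}(M_{12})$ and hence $ab\notin R^{\rm la}$. Thus $R^{\rm la}$ is not even closed under multiplication in this ring.

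You should also know that the paper does not close this gap either: the subring property is asserted without proof in the Note preceding Theorem 2.12, and the paper's proof of 2.12(1) establishes only maximality. So you located the real difficulty correctly, but your assertion that the case analysis goes through is wrong, and no argument can make it go through in the stated generality. For $n=2$ (where no intermediate index $k$ exists) everything you wrote is correct; for $n\geq 3$ the statement itself requires either additional hypotheses on $R$ or a stronger definition of the annihilating subrings.
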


\begin{proof} (1) Suppose that there is a subring $Y$ of $R$ such that $S$ is properly contained in $Y$. Assume $S = R^{\rm la}$. Then there exists $y \in Y$ with an entry $y_{ij}$ for some $i,j$ with $i > j$ such that $y_{ij} \not\in \ul{r}_{M_{ij}}(M_{ji}) \cap \ul{\ell}_{M_{ij}}(M_{ji})$. Then either $y_{ij} \not\in \ul{\ell}_{M_{ij}}(M_{ji})$, or $y_{ij} \in \ul{\ell}_{M_{ij}}(M_{ji})$ but $y_{ij} \not\in \ul{r}_{M_{ij}}(M_{ji})$. 

If $y_{ij} \not\in \ul{\ell}_{M_{ij}}(M_{ji})$, there exists $k \in M_{ji}$ such that $y_{ij} k \neq 0$. Let $t$ be the $n$-by-$n$ matrix with $k$ in the $(j,i)$-position and zero elsewhere. Then $t \in S \subseteq Y$, and so $0 \neq yt \in Y$. However, since $0 \neq y_{ij}k\in M_{ij} M_{ji} \in R_i$, we have that $Y \not\in \mathcal{T}_n$.
	
	If $y_{ij} \in \ul{\ell}_{M_{ij}}(M_{ji})$ but $y_{ij} \not\in \ul{r}_{M_{ij}}(M_{ji})$, then there exsits $h \in M_{ji}$ such that $hy_{ij} \neq 0$. Let $s$ be the $n$-by-$n$ matrix with $h$ in the $(j,i)$-position and zero elsewhere. Then $s \in S \subseteq Y$, and so $0 \neq sy \in Y$. However, since $0 \neq hy_{ij} \in M_{ji}M_{ij}\in R_j$, it follows that $Y \not\in \mathcal{T}_n$. 
	
	Therefore $S$ is a subring of $R$  maximal with respect to being in $\mathcal{T}_n$. The argument when $S = R^{\rm ua}$ is similar.
	
	(2) Again let $S = R^{\rm la}$ and $0 \neq y \in R$. If $y \in S$, we are finished. So assume $y \not\in S$. Then as in part (1) there exists an entry $y_{ij}$ of $y$ for some $i, j$ with $i > j$ such that $y_{ij} \not\in \ul{r}_{M_{ij}} (M_{ji}) \cap \ul{\ell}_{M_{ij}}(M_{ji})$. As in part (1), we obtain $s, t \in S$. Then $0 \neq sy E_{jj} \in S$ or $0 \neq E_{ii} yt\in S$. The argument when $S = R^{\rm ua}$ is similar.
	
	(3) This part is a consequence of (2).
	
	(4) This part follows from Lemma 2.11.
	
	(5) By Proposition 2.4(1), $\mathcal{D}(\ol{R})^- \trianglelefteq \ol{R}$ such that $(\mathcal{D}(\ol{R})^-)^n =0$. Let $u \in \mathcal{D}(R)$ and $x \in \mathcal{D}(\ol{R})^-$. Then $(u+x)(u^{-1}+x) = 1+ ux+xu^{-1}+x^2$. But $ux+xu^{-1}+x^2 \in \mathcal{D}(\ol{R})$, hence $(u+x)(u^{-1}+x) =w$ where $w \in {\rm U}(\ol{R})$. Therefore $u+x \in {\rm U}(\ol{R})$.
	
	Now assume $v \in {\rm U}(\ol{R})$. Then there exist $d \in \mathcal{D}(\ol{R})$ and $y \in \mathcal{D}(\ol{R})^-$ such that $v = d+y$. So $1 = dv^{-1}+yv^{-1}$. Hence $dv^{-1} = 1-yv^{-1}$. Since $yv^{-1} \in \mathcal{D}(\ol{R})^-,  \ dv^{-1} \in {\rm U}(\ol{R})$. So $d \in {\rm U}(\ol{R})$. Therefore ${\rm U}(\ol{R}) = \{u+x \ \vert \ u \in {\rm U}(\mathcal{D}(R))$ and $x \in \mathcal{D}(\ol{R})^-\}$. 
	
	The remainder of the proof is due to the above argument and the fact that $S$ is a subring of $R$ and $\ol{R}$.  
\end{proof}

Note that if $n=2$, then in Theorem 2.12(2), there is no need for the $E_{ii}$ and~$E_{jj}$.
\vskip0.2truecm

\noindent QUESTION: When is ${\rm U}(R)$ generated by ${\rm U}(R^{\rm la}) \cup {\rm U}(R^{\rm ua})$?

\begin{corollary}
Let $R$ be an $n$-by-$n$ generalized matrix ring, $S$ denotes $R^{\rm la}$ or~$R^{\rm ua}$ and $T = S^{\rm op} \otimes_\Z S$. Then:

\begin{enumerate}
	\item $S$ is maximal among subrings $Y$ of $R$ for which $\{E_{ii} \}^n_{i=1} \subseteq \mathfrak{P}_{\rm it}(R)$.
	\item The sum of the minimal ideals of $S$ equals ${\rm Soc}(S_T) = {\rm Soc}(R_T)$.
	\item The uniform dimension of $_SS_S$ equals the uniform dimension of $_SR_S$ equals the uniform dimension of $S_T$ equals the uniform dimension of $R_T$.
\end{enumerate}
\end{corollary}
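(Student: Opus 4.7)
The plan is to assemble all three parts from Theorem~2.12 together with Corollary~2.3 and the standard $(S,S)$-bimodule / right-$T$-module dictionary ($T=S^{\rm op}\otimes_\Z S$) recalled just before Theorem~2.12. Part~(1) is a repackaging of Theorem~2.12(1) via Corollary~2.3; parts~(2) and~(3) reduce, after that dictionary, to the classical facts that an essential extension preserves both socle and uniform dimension, applied to the essentiality of $S_T$ in $R_T$ proved in Theorem~2.12(3).

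For~(1), apply Corollary~2.3 inside each subring $Y$ of $R$ containing $\{E_{ii}\}_{i=1}^n$: the condition $\{E_{ii}\}_{i=1}^n\subseteq\mathfrak{P}_{\rm it}(Y)$ is equivalent to $Y\in\mathcal{T}_n$. (The statement of~(1) uses $\mathfrak{P}_{\rm it}(R)$, but only the version relative to $Y$ is meaningful.) Hence the class of subrings of $R$ in which each $E_{ii}$ is inner Peirce trivial is precisely the class of subrings of $R$ lying in $\mathcal{T}_n$, and Theorem~2.12(1) gives maximality of $S$ in this class.

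For~(2), a nonzero $(S,S)$-sub-bimodule of $S$ is the same thing as a nonzero two-sided ideal of $S$; translating through the $(S,S)$-bimodule/right-$T$-module correspondence, the minimal ideals of $S$ are exactly the simple right $T$-submodules of $S_T$, so the sum of the minimal ideals of $S$ equals ${\rm Soc}(S_T)$. For the equality ${\rm Soc}(S_T)={\rm Soc}(R_T)$, invoke Theorem~2.12(3): $S_T$ is essential in $R_T$. The standard essentiality argument then applies: for any simple $T$-submodule $L\subseteq R$, essentiality gives $L\cap S\neq 0$, and $L\cap S$ is a $T$-submodule of the simple $L$, forcing $L\cap S=L$, i.e., $L\subseteq S$. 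Therefore ${\rm Soc}(R_T)\subseteq S$, and the two socles coincide.

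For~(3), the same bimodule/$T$-module dictionary identifies the submodule lattices, so ${\rm udim}(_SS_S)={\rm udim}(S_T)$ and ${\rm udim}(_SR_S)={\rm udim}(R_T)$. The remaining equality ${\rm udim}(S_T)={\rm udim}(R_T)$ is the classical invariance of uniform dimension under essential extensions, applied once more to $S_T\leq_{\rm ess}R_T$ from Theorem~2.12(3). The only step that is not a pure citation is the essentiality argument used in~(2) and~(3); I expect that to be the main (but modest) obstacle, since one must verify explicitly that every simple $T$-submodule of $R_T$ lies in $S$ before the socles and uniform dimensions can be transferred across the inclusion.
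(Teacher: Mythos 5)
Your proposal is correct and takes essentially the same route as the paper: part (1) is Theorem 2.2 (in its Corollary 2.3 form) combined with the maximality in Theorem 2.12(1), where you rightly read the condition as $\{E_{ii}\}_{i=1}^n\subseteq\mathfrak{P}_{\rm it}(Y)$, and parts (2) and (3) are exactly the paper's intended deduction from the essentiality of $S_T$ in $R_T$ given by Theorem 2.12(3). The paper's proof is a bare citation of these results; your write-up simply supplies the standard details (simple submodules of an essential extension land in the essential submodule, and uniform dimension is invariant under essential extensions) that the authors leave implicit.
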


\begin{proof} (1) This part is a consequence of Theorems 2.2 and 2.12.
	
	(2) and (3). These parts are consequences of Theorem 2.12(3).
\end{proof}

Our next result demonstrates that useful information can be transferred from the diagonal rings $R_i$ of a generalized matrix ring $R$ to $R$ itself via Theorems 1.16 and~2.12. Recall from [R2] and [CR] that an $n$-by-$n$ $(n> 1)$ matrix ring over a strongly $\pi$-regular ring is not, in general, a strongly $\pi$-regular ring.

\begin{corollary}
Let $R$ be an $n$-by-$n$ generalized matrix ring, and $S = R^{\rm la}$ or~$R^{\rm ua}$. If $\mathcal{D}(R)$ is strongly $\pi$-regular, then for each $0 \neq y \in R$ either:

\begin{enumerate}
	\item $y \in S$, in which case $y^n \in y^{n+1}S \subseteq y^{n+1}R$ for some positive integer $n$; or
	\item $y \not\in S$, in which case either $0 \neq syE_{jj} \in S$ and $(syE_{jj})^m \in (syE_{jj})^{m+1}S \subseteq (syE_{jj})^{m+1}R$, or $0 \neq E_{ii}yv \in S$ and $(E_{ii}yv)^k \in (E_{ii}yv)^{k+1}S \subseteq(E_{ii} yv)^{k+1}R$ for some $s,v,E_{ii}, E_{jj} \in S$ and positive integers $k, m, n$.
\end{enumerate}
\end{corollary}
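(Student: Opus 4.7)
The plan is to reduce the corollary to a direct combination of Theorem 2.12 and Theorem 1.16, using that strong $\pi$-regularity is one of the properties $\mathcal{C}$ to which Theorem 1.16 applies.

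First I would observe that $S \in \mathcal{T}_n$ by Theorem 2.12(1), and that the diagonal rings of $S$ are literally the same rings $R_1,\dots,R_n$ as those of $R$ — this is clear from Definition 2.9, since passing to $R^{\mathrm{la}}$ or $R^{\mathrm{ua}}$ only alters off-diagonal entries. Hence $\mathcal{D}(S)=\mathcal{D}(R)$. Now the result of Fisher--Snider cited at the end of the proof of Theorem 1.13 says precisely that a ring is strongly $\pi$-regular iff every prime factor is strongly $\pi$-regular; thus strong $\pi$-regularity qualifies as a property $\mathcal{C}$ in the sense of Theorem 1.13, and therefore Theorem 1.16(2) applies to $S$ and $\mathcal{D}(S)$. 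The hypothesis that $\mathcal{D}(R)$ is strongly $\pi$-regular therefore upgrades to the statement that $S$ itself is strongly $\pi$-regular.

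Next I would fix $0 \neq y \in R$ and split into the two cases of Theorem 2.12(2). If $y \in S$, then by strong $\pi$-regularity of $S$ there is a positive integer $n$ with $y^n \in y^{n+1}S$; since $S \subseteq R$, the containment $y^{n+1}S \subseteq y^{n+1}R$ is immediate, giving case (1). If $y \notin S$, Theorem 2.12(2) produces $s, v, E_{ii}, E_{jj} \in S$ so that either $0 \neq syE_{jj} \in S$ or $0 \neq E_{ii}yv \in S$. In the first alternative, applying strong $\pi$-regularity of $S$ to the element $syE_{jj}$ yields a positive integer $m$ with $(syE_{jj})^m \in (syE_{jj})^{m+1}S \subseteq (syE_{jj})^{m+1}R$; the second alternative is handled identically with $E_{ii}yv$ in place of $syE_{jj}$ and $k$ in place of $m$, producing case (2).

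I do not expect a genuine obstacle here: the corollary is essentially a packaging of Theorem 2.12(1)--(2) together with the transfer Theorem 1.16, once one verifies that strong $\pi$-regularity is a property $\mathcal{C}$. The only points to handle carefully are checking that $\mathcal{D}(S)=\mathcal{D}(R)$ (so that the hypothesis on $\mathcal{D}(R)$ actually lifts to $S$) and noting that the requested containments are in $y^{n+1}R$ rather than $y^{n+1}S$, which is automatic because $S \subseteq R$.
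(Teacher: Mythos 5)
Your proposal is correct and is essentially the paper's own proof: the paper simply cites Theorems 1.16 and 2.12(2), and your argument fills in exactly the intended details (that $S\in\mathcal{T}_n$ with $\mathcal{D}(S)=\mathcal{D}(R)$, that strong $\pi$-regularity is a property $\mathcal{C}$ via the Fisher--Snider result, so Theorem 1.16(2) makes $S$ strongly $\pi$-regular, and then the case split from Theorem 2.12(2)). No gaps; the two minor verifications you flag ($\mathcal{D}(S)=\mathcal{D}(R)$ and $S\subseteq R$) are handled correctly.
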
 

\begin{proof}
The proof follows from Theorems 1.16 and 2.12(2).
\end{proof}

Thus if $\mathcal{D}(R)$ is strongly $\pi$-regular, then $R$ is ``almost'' strongly $\pi$-regular.

\vskip0.4truecm

Next we introduce the notion of an ideal extending ring. The ideal extending condition is shown to be a Morita invariant.  Moreover, it is shown that important classes of rings have this property.  For example, the semiprime quasi-Baer rings are ideal extending, so this insures that every semiprime ring has a hull which is ideal extending (see Proposition 2.16).  The class of semiprime quasi-Baer rings includes the local multiplier $C^\ast$-algebras which means that every $C^\ast$-algebra can be embedded into its local multiplier $C^\ast$-algebra which is an ideal extending ring.  

As applications of the results in 2.12 - 2.14 we show in 2.16 - 2.21 that the ideal extending property transfers from a ring $A$ to a certain type of overring of $A$ which is in $\mathcal{T}_n$. 

Let $X$ and $Y$ be both left or both right ideals of a ring $R$ with $X \subseteq Y$. Then $X$ is {\it ideal essential} in $Y$ if for each $0 \neq I \trianglelefteq R$ such that $I \subseteq Y$, then $0 \neq X \cap I$. Note that if $R$ is a semiprime ring and $X, Y \trianglelefteq R$ with $X \subseteq Y$, then $X$ is ideal essential in $Y$ if and only if $X$ is right or left essential in $Y$.

\begin{definition}
We say $R$ is {\it ideal extending} if for each $X \trianglelefteq R$ there is an $e \in \mathcal{B}(R)$ such that $X$ is ideal essential in $eR$. 
\end{definition}

Note that every nonzero ideal of~$R$ is ideal essential in $R$ if and only if $\mathcal{B}(R) = \{0,1\}$ and $R$ is ideal extending. Some immediate examples of ideal extending rings are: $R$ is a prime ring, $R$ is an Abelian (i.e., every idempotent is central) right extending ring (e.g., $R$ is a direct sum of commutative uniform rings (see [DHSW])), or $R = \left[ \begin{array}{cc} A & M \\
0 & A \end{array}\right]$ where $A$ is a prime ring and $M \trianglelefteq A$.

The next result shows that the class of ideal extending rings is quite extensive. See [BPR1] or [BPR2] for undefined terminology.

\begin{proposition}
Assume $R$ is a semiprime ring. Then:
\begin{enumerate}
	\item [(1)] $R$ is ideal extending if and only if $R$ is quasi-Baer if and only if $R_R$ is FI-extending.
	\item[(2)] $R$ has an ideal extending hull.
\end{enumerate}
\end{proposition}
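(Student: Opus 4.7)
The plan is to reduce both parts to the Birkenmeier--Park--Rizvi theory of quasi-Baer rings, FI-extending rings, and ring hulls, as developed in the sources cited throughout the paper. For (1), my target is the chain
\[
R \text{ ideal extending} \;\Longleftrightarrow\; R_R \text{ FI-extending} \;\Longleftrightarrow\; R \text{ quasi-Baer},
\]
the second equivalence being the known BPR characterization of semiprime quasi-Baer rings. For (2), I will exhibit the ideal extending hull as the quasi-Baer hull, whose existence in the semiprime setting is another BPR result.

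To begin (1), I would invoke the remark preceding Definition 2.15: in a semiprime ring, ideal-essentiality and right-essentiality coincide for pairs of ideals. The implication ``ideal extending $\Rightarrow$ $R_R$ FI-extending'' is then immediate, since any central idempotent $e$ witnessing ideal extending yields a direct summand $eR$ of $R_R$ in which the ideal $X$ is right essential.

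The main obstacle is the converse: in a semiprime FI-extending ring, the idempotent $f$ with $X\subseteq^{\mathrm{ess}}_r fR$ must be shown replaceable by a central one. My argument proceeds in two steps. From $X\subseteq fR$ one has $fx=x$ for all $x\in X$, hence $(1-f)X=0$; then $(X(1-f))^2\subseteq X(1-f)X=0$, and the two-sided ideal $RX(1-f)R$ inherits square-zeroness, so semiprimeness yields $X(1-f)=0$ and thus $X\subseteq fRf$. Next, for $y\in fR(1-f)$ one has $yRX\subseteq y(RX)\subseteq yX=0$, whence $yR\subseteq\ul{\ell}_R(X)$. Since $R$ is semiprime and $\ul{\ell}_R(X)$ is a two-sided ideal, $\ul{\ell}_R(X)\cap X=0$, so $yR\cap X=0$. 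Combined with $yR\subseteq fR$ and the right-essentiality of $X$ in $fR$, this forces $yR=0$, so $y=0$ and $fR(1-f)=0$. A dual calculation gives $((1-f)Rf)^2=(1-f)R\cdot f(1-f)\cdot Rf=0$, and the ideal $R(1-f)RfR$ has square zero by virtue of $fR(1-f)=0$; semiprimeness then forces $(1-f)Rf=0$. Therefore $f\in\mathcal{B}(R)$, and the definition of ideal extending is satisfied.

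For part (2), by (1) the ideal extending overrings of $R$ within any fixed ambient ring coincide with its quasi-Baer overrings. The BPR hull theory for semiprime rings guarantees the existence of a minimal such overring inside the maximal right ring of quotients; this quasi-Baer hull, re-read via (1), is the desired ideal extending hull of~$R$.
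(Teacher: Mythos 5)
Your proposal is correct, but it does genuinely more work than the paper, whose entire proof of this proposition is citation: part (1) is referred to [BMR, Theorem 4.7] or [BPR2, Theorem 3.2.37], and part (2) to those plus [BPR1, Theorem 3.3] or [BPR2, Theorem 8.3.17]. Where you differ is in part (1): you supply a self-contained proof of the bridge ``ideal extending $\Leftrightarrow$ $R_R$ FI-extending,'' citing BPR only for ``FI-extending $\Leftrightarrow$ quasi-Baer,'' whereas the paper cites for the whole chain. Your argument for the hard direction is sound: from $X$ right essential in $fR$ you get $(1-f)X=0$, then semiprimeness kills the square-zero ideal $RX(1-f)R$ (using $(1-f)RX\subseteq(1-f)X=0$) to give $X\subseteq fRf$; then $fR(1-f)=0$ follows because any $y\in fR(1-f)$ has $yR\subseteq\ul{\ell}_R(X)$, which meets $X$ trivially in a semiprime ring, forcing $yR=0$ by essentiality; finally the ideal $R(1-f)RfR$ is square-zero by $fR(1-f)=0$, so $(1-f)Rf=0$ and $f$ is central. (One cosmetic point: your intermediate claim $\bigl((1-f)Rf\bigr)^2=0$ is vacuous --- it holds for every idempotent since $f(1-f)=0$ --- and carries no weight; the actual content is exactly the square-zeroness of the two-sided ideal $R(1-f)RfR$, which you do state and use correctly.) What your route buys is worth noting: ``ideal extending'' is this paper's own definition, and the cited BMR/BPR theorems are phrased in terms of FI-extending and quasi-Baer, so your explicit verification that the witnessing idempotent can be taken central makes precise a translation the paper leaves implicit. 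Your part (2) coincides with the paper's: identify the ideal extending hull with the semiprime quasi-Baer (equivalently FI-extending) hull whose existence is [BPR1, Theorem 3.3].
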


\begin{proof} (1) See [BMR, Theorem 4.7] or [BPR2, Theorem 3.2.37].

(2) This part follows from (1) and [BPR1, Theorem 3.3] or [BPR2, Theorem~8.3.17].
\end{proof}

From Proposition 2.16 it follows that every von Neumann algebra and every local multiplier algebra of a $C^\ast$-algebra are ideal extending as rings (see [K] and [BPR1, pp.~345-347] or [BPR2, pp.~380-407]).

\begin{theorem} (1) Let $\{R_i | i \in I\}$ be a set of rings. Then $\Pi_{i\in I} R_i$ is ideal extending if and only if each $R_i$ is so.

	(2) The ideal extending property is a Morita invariant.
\end{theorem}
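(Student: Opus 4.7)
The plan for (1) is to exploit the embedding of each factor $R_i$ into $P:=\prod_{j\in I} R_j$ via the central idempotent $\varepsilon_i := (\delta_{ij}1_{R_j})_j \in \mathcal{B}(P)$, which identifies $R_i$ with $\varepsilon_i P \trianglelefteq P$. For the $(\Leftarrow)$ direction, given an ideal $X$ of $P$, I would set $X_i := \varepsilon_i X = X \cap R_i \trianglelefteq R_i$, apply the hypothesis to each $R_i$ to obtain $f_i \in \mathcal{B}(R_i)$ with $X_i$ ideal essential in $f_iR_i$, and take $f := (f_i) \in \mathcal{B}(P)$. A componentwise check confirms $X \subseteq fP$: for $x \in X$ and each $j$, the slice $\varepsilon_j x$ lies in $X_j \subseteq f_jR_j$, so $x = fx$. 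For the ideal essentiality of $X$ in $fP$, take any nonzero $I \trianglelefteq P$ with $I \subseteq fP$ and pick $y \in I$ with $y_k \neq 0$ for some $k$; then $\varepsilon_k I$ is a nonzero ideal of $R_k$ contained in $f_kR_k$, so it meets $X_k$, which gives $X \cap I \ne 0$.

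For the $(\Rightarrow)$ direction I would regard $Y \trianglelefteq R_i$ as an ideal of $P$ via $\varepsilon_i$, use the hypothesis on $P$ to produce $f = (f_j) \in \mathcal{B}(P)$ with $Y$ ideal essential in $fP$, and then show that $f_i \in \mathcal{B}(R_i)$ is the desired witness in $R_i$. Indeed, $Y \subseteq R_i \cap fP = f_iR_i$, and any nonzero $J \trianglelefteq R_i$ with $J \subseteq f_iR_i$ embeds as an ideal of $P$ inside $fP$, whence it meets $Y$ by the ideal essentiality downstairs.

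For (2), I would invoke the standard fact that a Morita equivalence between $R$ and $S$ induces a lattice isomorphism $\Phi : \{I \trianglelefteq R\} \to \{J \trianglelefteq S\}$ preserving containment and arbitrary intersections, which restricts to a bijection $\mathcal{B}(R) \leftrightarrow \mathcal{B}(S)$ with $\Phi(eR) = e'S$ whenever $e \mapsto e'$. Because the condition ``$X$ is ideal essential in $Y$'' is purely lattice theoretic inside the ideal lattice (for every nonzero ideal $Z \subseteq Y$, $X \cap Z \ne 0$), it is preserved by $\Phi$, and the ideal extending property transfers. Alternatively one may reduce to the two generating cases $S = M_n(R)$ (ideals of the form $M_n(I)$, central idempotents of the form $cI_n$ with $c \in \mathcal{B}(R)$) and $S = eRe$ for a full idempotent $e$ (with the ideal bijection $I \mapsto eIe$, $K \mapsto RKR$), and verify each case directly; both go through by rote translation of ``ideal essential'' across the bijection.

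The main obstacle will be the infinite product in (1): one must resist the false identity $X = \bigoplus_i X_i$ (which can fail when elements of $X$ have infinite support) and instead work entirely with slices and coordinatewise relations, leveraging the centrality of the $\varepsilon_i$ to extract single coordinates while preserving the ideal property. In (2), the compatibility of the Morita correspondence with central idempotents and with the ``sub-ideal'' relation is standard but warrants explicit verification, most cleanly via the two reduction cases.
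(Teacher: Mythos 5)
Your proposal is correct, and on part (2) your primary route genuinely differs from the paper's. For (1) the paper simply declares the proof routine; your componentwise argument via the central idempotents $\varepsilon_i$ is exactly the intended one, and your caution about infinite supports (working with slices $\varepsilon_k X$ rather than the false identity $X = \oplus_i X_i$) is the right care to take. For (2), the paper follows what you call the alternative reduction: it asserts that ideal extending passes between $R$ and $M_n(R)$ (``a straightforward argument''), then gives a hands-on element proof for a corner $eRe$ with $e$ full --- given $0 \neq K \trianglelefteq eRe$ it chooses $c \in \mathcal{B}(R)$ with $RKR$ ideal essential in $cR$, shows $ec \in \mathcal{B}(eRe)$ is the required witness, the real work being the case split $eYe \neq 0$ versus $eYe = 0$, where the latter is ruled out by writing $1 = \sum a_j e b_j$ from fullness --- and finally cites [L, Corollary 18.35] to conclude Morita invariance. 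Your preferred lattice-theoretic argument is a clean alternative: Morita equivalence does induce an isomorphism of ideal lattices, ideal essentiality is a purely lattice-theoretic relation, and the ideals $eR$ with $e \in \mathcal{B}(R)$ are precisely the complemented elements of the ideal lattice (if $I \cap J = 0$ and $I + J = R$, write $1 = e + f$ with $e \in I$, $f \in J$, and check $e$ is a central idempotent with $I = eR$), so they are automatically preserved; this identification is the one point you should make explicit, since your phrase about the isomorphism ``restricting to a bijection $\mathcal{B}(R) \leftrightarrow \mathcal{B}(S)$'' is not literally part of the standard statement. The trade-off: your route is shorter and conceptual but leans on the ideal-lattice isomorphism theorem as a black box, while the paper's route needs only the characterization of Morita equivalence via full idempotents in matrix rings and does all verification at the level of elements.
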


\begin{proof} (1) The proof of this part is routine.

(2) Assume that $R$ is ideal extending. Then a straightforward argument shows that $R$ is ideal extending if and only if the ring of $n$-by-$n$ matrices over $R$ is ideal extending. Let $e$ be a full idempotent of $R$ (i.e. $ReR= R$) and $0\neq K \trianglelefteq e Re$. Then $RKR$ is ideal essential in $cR$ for some $c \in \mathcal{B}(R)$. Hence $K \subseteq ec(eRe)$, where $ec \in \mathcal{B}(eRe)$. Let $0 \neq X \trianglelefteq eRe$ such that $X \subseteq ec(eRe)$. Then $RXR \subseteq cR$, so $0 \neq Y = RXR \cap RKR$. If $eYe \neq 0$, there exists $y \in Y$ such that $0 \neq eye= \Sigma r_\alpha x_\alpha s_\alpha = \Sigma t_\beta k_\beta v_\beta$, where $r_\alpha, s_\alpha, t_\beta, v_\beta \in R$, $x_\alpha \in X$ and $k_\beta \in K$. So $x_\alpha = e x_\alpha e$ and $k_\beta = ek_\beta e$. Hence $0 \neq eye = \Sigma er_\alpha (ex_\alpha e)s_\alpha e = \Sigma et_\beta (ek_\beta e)v_\beta e \in X \cap K$.

Now assume $eYe = 0$. Since $e$ is full, $1 = \Sigma a_j e b_j$ for some $a_j,b_j\in R$. Let $0\ne w\in Y.$ Then $w = 1w1 = (\Sigma a_j e b_j) w (\Sigma a_j e b_j)$ . So there exists $j_1, j_2$ such that $a_{j_1} e b_{j_1} wa_{j_2} eb_{j_2} \neq 0$, otherwise $w=0$, a contradiction. Hence $eb_{j_1} w a_{j_2} e \neq 0$, contrary to $eYe=0$. Thus $e Re$ is ideal extending. By [L, Corollary 18.35], the ideal extending property is a Morita invariant.
\end{proof}

\begin{proposition}
Let $R$ be an $n$-by-$n$ generalized  matrix ring and $S = R^{\rm la}$.
\begin{enumerate}
	\item [(1)] If $_SX_S \leq {_S}R_S$ and $X \cap S$ is ideal essential in $eS$, for some $e \in \mathcal{S}_r(S)$, then $X$ essential in $eR$ as an $(S,S)$-bisubmodule.
	\item[(2)] If $S$ is an ideal extending ring, then for each $X \trianglelefteq R$ there is an $e \in \mathcal{B}(S)$ such that $X$ is ideal essential in $eR$.
\end{enumerate}
\end{proposition}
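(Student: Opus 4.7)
The plan is to prove (1) first by combining Theorem 2.12(3) with the hypothesis on $X\cap S$, and then to obtain (2) as a short consequence of (1) applied to the witness produced by the ideal extending hypothesis on $S$.

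For part (1), let $Y$ be any nonzero $(S,S)$-bisubmodule of $eR$; the goal is to show $X\cap Y\ne 0$. Since $Y$ is in particular a nonzero $(S,S)$-bisubmodule of $R$, Theorem 2.12(3) forces $Y\cap S\ne 0$. Next I would argue that $Y\cap S\subseteq eS$: every $y\in Y$ has the form $y=er$ with $r\in R$, so $ey=y$, and therefore each $s\in Y\cap S$ satisfies $s=es\in eS$. Because $Y\cap S$ is closed under left and right multiplication by $S$, it is a nonzero ideal of $S$ contained in $eS$. The hypothesis that $X\cap S$ is ideal essential in $eS$ then yields $(X\cap S)\cap(Y\cap S)\ne 0$, hence $X\cap Y\ne 0$, as required.

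For part (2), let $X\trianglelefteq R$. Intersecting with the subring $S$ produces an ideal $X\cap S\trianglelefteq S$. Since $S$ is ideal extending, there is some $e\in\mathcal{B}(S)$ with $X\cap S$ ideal essential in $eS$. Because $\mathcal{B}(S)\subseteq\mathcal{S}_r(S)$, this $e$ satisfies the hypothesis of part (1), so $X$ is essential in $eR$ as an $(S,S)$-bisubmodule. To pass from bimodule-essential to ideal-essential in $R$, note that every ideal $I\trianglelefteq R$ with $I\subseteq eR$ is automatically an $(S,S)$-bisubmodule of $eR$ (since $S\subseteq R$), so if $I\ne 0$ then $X\cap I\ne 0$. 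Thus $X$ is ideal essential in $eR$.

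The main technical point I expect to need care with is the verification that $Y\cap S\subseteq eS$ and that $Y\cap S$ is genuinely a nonzero ideal of $S$; everything else is a direct appeal to Theorem 2.12(3) and the ideal extending definition. No further work is needed because $\mathcal{B}(S)\subseteq\mathcal{S}_r(S)$ is immediate and the inclusion from $(S,S)$-bimodule essentiality to ideal essentiality is trivial.
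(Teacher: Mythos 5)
Your argument for the ``intersection half'' of (1) coincides with the second half of the paper's proof: for a nonzero $(S,S)$-bisubmodule $Y$ of $eR$, Theorem 2.12(3) gives $0 \neq Y \cap S$, this is an ideal of $S$ contained in $eS$, and ideal essentiality of $X \cap S$ in $eS$ forces $X \cap Y \neq 0$. Likewise your derivation of (2) from (1) is what the paper intends by ``a consequence of (1).'' However, there is a genuine gap in (1): you never prove that $X \subseteq eR$. The hypothesis only makes $X$ an $(S,S)$-bisubmodule of $R$ (with $X \cap S \subseteq eS$), and essentiality of $X$ in $eR$ --- like the paper's definition of ideal essential --- presupposes that $X$ actually sits inside $eR$; nothing in your argument rules out a piece of $X$ lying outside $eR$. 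This containment is the first, and the less routine, half of the paper's proof. It goes as follows: since $e \in \mathcal{S}_r(S)$, one has $1-e \in \mathcal{S}_{\ell}(S)$, so $(1-e)X$ is again an $(S,S)$-bisubmodule of $R$ (left multiplication works because $S(1-e) = (1-e)S(1-e)$). If $(1-e)X \neq 0$, then Theorem 2.12(3) yields $0 \neq (1-e)X \cap S \subseteq X \cap S \subseteq eS$; but any $s \in (1-e)X \cap eS$ satisfies $s = es \in e(1-e)X = 0$, a contradiction. Hence $(1-e)X = 0$, i.e.\ $X \subseteq eR$. Note that this is precisely where the hypothesis $e \in \mathcal{S}_r(S)$, rather than $e$ being merely idempotent, is used; your proposal never invokes it, which is itself a sign that something is missing.

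The omission propagates to your part (2): its conclusion is that $X$ is ideal essential in $eR$, which by the paper's definition again requires $X \subseteq eR$, while your final paragraph only verifies the intersection condition against ideals of $R$ contained in $eR$. Once the containment step above is inserted into (1), both your (1) and your (2) become correct and essentially identical to the paper's argument.
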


\begin{proof} (1) Assume $(1-e) X \neq 0$. Since $1-e \in \mathcal{S}_\ell (S), (1-e)X$ is an $(S,S)$-bisubmodule of $R$. By Theorem 2.12(3), $0 \neq (1-e) X \cap S \subseteq X \cap S \subseteq eS$, a contradiction. Then $X \subseteq eR$. Let $0 \neq \, _SY_S \leq \, _SR_S$ such that $Y \subseteq eR$ and $Y \cap X =0$. Hence $0 \neq Y \cap S \subseteq eS$ and $Y \cap S \trianglelefteq S$, a contradiction.

(2) This part is a consequence of (1).
\end{proof}

\begin{example} (1) This example illustrates Proposition 2.18(1).

Let $R = \left[ \begin{array}{cc} \Z \times \Z_4 & \Z \times 2\Z_4\\
\Z \times \{0\} & \Z \times \Z \end{array}\right]$, and let $S = R^{\rm la}$. Then $S = \left[ \begin{array}{cc} \Z \times \Z_4 & \Z \times 2\Z_4\\
\{0\} \times \{0\} & \Z \times \Z \end{array}\right]$. Take $X = \left[ \begin{array}{cc} \{0\} \times \{0\} & \{0\} \times \{0\} \\
\{0\} \times \{0\} & \{0\} \times 2 \Z \end{array}\right] \trianglelefteq R$ and $e = \left[ \begin{array}{cc} (0,0) & (0,0) \\
(0,0) & (0,1) \end{array}\right]$. Then $e \in \mathcal{S}_r(S)$ and $_SX_S$ is essential as an $(S,S)$-bisubmodule of $eR$. Note that $e \not\in \mathcal{B}(S)$.

(2) This example shows that in Proposition 2.18(2), $S$ cannot be replaced by~$D(R)$. Let $R = \left[ \begin{array}{cc} \Z_4 & 2\Z_4\\
0 & \Z_4 \end{array}\right]$ (note that $R=S=R^{\rm la}$). Then $D(R)$ is a commutative selfinjective ring, hence it is ideal extending. However $\mathcal{B}(R) = \{0,1\}$, but $\left[ \begin{array}{cc} 2\Z_4 & 0 \\
0 & 0 \end{array}\right]$ and $\left[ \begin{array}{cc} 0 & 2 \Z_4\\
0 & 0 \end{array}\right]$ are ideals of $R$ whose intersection is zero. Therefore $R$ is not ideal extending.
\end{example}

\begin{lemma}
If $A$ is an ideal extending ring, then $R$ is ideal extending where $R$ is the $n$-by-$n$ upper triangular matrix ring over $A$.
\end{lemma}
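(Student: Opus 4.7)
The plan is a direct argument exploiting the simple shape of central idempotents and ideals in $R$. First, Lemma 2.11 together with the commutation condition $c_{ii}m = m c_{jj}$ for every $m \in M_{ij} = A$ (take $m = 1$) forces all diagonal entries of a central idempotent of $R$ to coincide, so $\mathcal{B}(R) = \{fI_n : f \in \mathcal{B}(A)\}$. Second, for any $X \trianglelefteq R$, introduce the coordinate ideals $X_{ij} := \{a \in A : aE_{ij} \in X\}$ for $1 \leq i \leq j \leq n$; each is a two-sided ideal of $A$ (sandwich by diagonal matrix units), and left multiplication by $E_{ki}$ or right multiplication by $E_{jl}$ yields the monotonicity $X_{ij} \subseteq X_{kl}$ whenever $k \leq i$ and $l \geq j$. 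In particular every $X_{ij}$ is contained in $X_{1n}$, and one has the direct-sum decomposition $X = \bigoplus_{i \leq j} X_{ij}E_{ij}$, so $X = 0$ if and only if $X_{1n} = 0$.

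Given a nonzero $X \trianglelefteq R$, set $J := X_{1n}$ and invoke the ideal extending hypothesis on $A$ to pick $f \in \mathcal{B}(A)$ with $J \subseteq fA$ and $J$ ideal essential in $fA$; put $e := fI_n \in \mathcal{B}(R)$. Since each $X_{ij} \subseteq J \subseteq fA$, we have $X \subseteq eR$. For ideal essentiality of $X$ in $eR$, take any $0 \neq Y \trianglelefteq R$ with $Y \subseteq eR$; by the same structural lemma $Y_{1n}$ is a nonzero ideal of $A$ contained in $fA$, hence ideal essentiality of $J$ in $fA$ produces $0 \neq a \in J \cap Y_{1n}$, and then $aE_{1n}$ is a nonzero element of both $X$ and $Y$, as required. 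The edge case $X = 0$ is handled by taking $e = 0$.

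The main obstacle is the structural lemma for ideals of $R$---the decomposition $X = \bigoplus_{i \leq j} X_{ij}E_{ij}$ together with the monotonicity $X_{ij} \subseteq X_{kl}$ for $k \leq i$, $l \geq j$. Once these are established by the routine matrix-unit sandwich manipulation, the transfer of the ideal extending property from $A$ to $R$ is a clean passage through the ``corner'' position $(1,n)$, where the ideal $X_{1n}$ carries all the essentiality information needed. Note that this strategy bypasses Proposition 2.18(2), which is unavailable here since $R^{\mathrm{la}} = R^{\mathrm{ua}} = R$ for upper triangular $R$ and would merely restate what is to be proved.
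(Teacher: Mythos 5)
Your proposal is correct and follows essentially the same route as the paper's proof: the same decomposition of an ideal $X \trianglelefteq R$ into coordinate ideals $X_{ij}$ of $A$ with the monotonicity toward the $(1,n)$ corner, the same identification $\mathcal{B}(R) = \{f1_R : f \in \mathcal{B}(A)\}$, and the same choice of $f \in \mathcal{B}(A)$ with $X_{1n}$ ideal essential in $fA$. The only difference is that you spell out the ``routine argument'' the paper omits, namely that a nonzero ideal $Y \subseteq eR$ has $Y_{1n} \neq 0$ and an element $0 \neq a \in X_{1n} \cap Y_{1n}$ yields $0 \neq aE_{1n} \in X \cap Y$, which is exactly what the paper intends.
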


\begin{proof}
Let $X \trianglelefteq R$. Then
$$X = \left[ \begin{array}{ccccc}
X_{11} & X_{12} & \cdots & X_{1n} \\
0 & X_{22} & & X_{2n} \\
\vdots & \ddots  & \ddots & \vdots \\
0 & \cdots & 0 & X_{nn} \end{array}\right],$$
where each $X_{ij} \trianglelefteq A, X_{ii} \subseteq X_{i, i+1} \subseteq \cdots \subseteq X_{in}$ and $X_{ii} \subseteq X_{i-1, i} \subseteq  \cdots \subseteq X_{1i}$ for all $1 \leq i \leq n$. Observe $e \in \mathcal{B}(R)$ if and only if $e = c1_R$, for some $c \in \mathcal{B}(A)$. There exists $f\in \mathcal{B}(A)$ such that $X_{1n}$ is ideal essential in $fA$. Then a routine argument shows that $X$ is ideal essential in $(f1_R)R$.
\end{proof}

The following corollary is an application of Proposition 2.18 (hence of Theorem~2.12).

\begin{corollary}
Let $A$ be a ring and $R$ the $n$-by-$n$ generalized matrix ring of the form
$$R = \left[ \begin{array}{ccccc}
A & A & \cdots & A \\
X_{21} & A & \cdots & A \\
\vdots & \ddots & \ddots & \vdots\\
X_{n1} & \cdots & X_{n, n-1} & A \end{array}\right],$$
where $X_{ij} = A$ for $i \leq j,\  X_{ij} \trianglelefteq A$ for $j < i, \ X_{j1} \subseteq X_{j2} \subseteq \cdots \subseteq X_{jn}$ and $X_{ni} \subseteq X_{n-1, i} \subseteq \cdots \subseteq X_{1i}$, for all $1 \leq i \leq n$ and $1 \leq j \leq n$. Then $A$ is ideal extending if and only if $R$ is ideal extending.
\end{corollary}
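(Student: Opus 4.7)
The plan is to reduce everything to the upper triangular matrix ring over $A$ via $R^{\mathrm{la}}$. First I would identify $S := R^{\mathrm{la}}$ with the $n$-by-$n$ upper triangular matrix ring over $A$: for $j<i$ we have $M_{ij} = X_{ij} \subseteq A$ and $M_{ji} = A$, and since $1 \in A$ the annihilators $\underline{r}_{X_{ij}}(A)$ and $\underline{\ell}_{X_{ij}}(A)$ inside $X_{ij}$ are both zero, so the lower entries of $R^{\mathrm{la}}$ vanish. Next I would verify, using Lemma~2.11 and the fact that each $X_{ij}$ is a two-sided ideal of $A$, that
$$\mathcal{B}(S) \;=\; \mathcal{B}(R) \;=\; \{c\,I_n : c \in \mathcal{B}(A)\}.$$
The essential point is that a central idempotent of either ring must be scalar on the diagonal (the bimodule condition $c_{ii}a = ac_{jj}$ for $i<j$ forces $c_{ii}=c_{jj}$ upon taking $a=1$), and any such $cI_n$ automatically commutes with the ideals $X_{ij}$.

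For the ``$A$ ideal extending $\Rightarrow R$ ideal extending'' direction, Lemma~2.20 gives that $S$ is ideal extending. Proposition~2.18(2) then yields, for each $X \trianglelefteq R$, some $e \in \mathcal{B}(S)$ such that $X$ is ideal essential in $eR$. Since $\mathcal{B}(S) \subseteq \mathcal{B}(R)$ by the computation above, this witnesses $R$ as ideal extending.

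For the converse, given $I \trianglelefteq A$, I would introduce the element
$$\widehat{I} := [I_{ij}], \qquad I_{ij} = \begin{cases} I & \text{if } i \leq j, \\ I \cap X_{ij} & \text{if } j < i. \end{cases}$$
A short check (using that $I \trianglelefteq A$ and that $R$ is closed under matrix multiplication, so each product $r_{ij}I_{jk}$ lies both in $I$ and in the appropriate $M_{ik}$) confirms $\widehat{I} \trianglelefteq R$. By hypothesis there exists $c \in \mathcal{B}(A)$ with $\widehat{I}$ ideal essential in $(cI_n)R$. Reading the $(n,n)$-entry of the inclusion $\widehat{I} \subseteq (cI_n)R$ gives $I \subseteq cA$. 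Now for any nonzero $J \trianglelefteq A$ with $J \subseteq cA$, form the analogous $\widehat{J}$; its $(1,1)$-entry is $J \neq 0$, and the inclusion $\widehat{J} \subseteq (cI_n)R$ holds once one knows $J \cap X_{ij} \subseteq cX_{ij}$, which follows from $c = c^2 \in \mathrm{Cen}(A)$ since every $x \in cA \cap X_{ij}$ satisfies $x = cx \in cX_{ij}$. Ideal essentiality then forces $\widehat{I} \cap \widehat{J} \neq 0$, and any nonzero entry of $\widehat{I} \cap \widehat{J}$ lies in $I \cap J$. Hence $I$ is ideal essential in $cA$, so $A$ is ideal extending.

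The main conceptual point—and the step I expect to take most care—is the construction of $\widehat{I}$ in the converse direction: one must pick the ideal of $R$ associated to $I$ so that both inclusions $\widehat{I} \subseteq (cI_n)R$ and $\widehat{J} \subseteq (cI_n)R$ can be established \emph{from data about $I,J \subseteq cA$ alone}. The choice $I_{ij} = I$ (or $I \cap X_{ij}$ below the diagonal) is the largest ideal of $R$ whose diagonal entries equal $I$, and the idempotency of $c$ is exactly what makes the compatibility $cA \cap X_{ij} \subseteq cX_{ij}$ work. The forward direction, by contrast, is essentially a bookkeeping composition of Lemma~2.20 and Proposition~2.18(2) once $\mathcal{B}(S) \subseteq \mathcal{B}(R)$ is known.
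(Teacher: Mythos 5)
Your proof is correct and takes essentially the same route as the paper's: the forward direction is the same composition of Lemma~2.20 with Proposition~2.18(2), and your $\widehat{I}$ is exactly the paper's ``set of $n$-by-$n$ matrices over $X$'' (correctly interpreted as $M_n(I)\cap R$), combined with the observation that $\mathcal{B}(R)=\{c1_R \ \vert \ c\in\mathcal{B}(A)\}$. The only difference is that you make explicit two steps the paper leaves implicit, namely the containment $\mathcal{B}(S)\subseteq\mathcal{B}(R)$ needed to pass from Proposition~2.18(2) to the ideal extending property of $R$, and the verification $J\cap X_{ij}\subseteq cX_{ij}$ underlying the claim that $I$ is ideal essential in $cA$.
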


\begin{proof} $(\Rightarrow)$ Assume $A$ is ideal extending. Observe that $R^{\rm la}$ is the $n$-by-$n$ upper triangular matrix ring over $A$. By Lemma 2.20, $R^{\rm la}$ is ideal extending. From Proposition 2.18(2), $R$ is ideal extending.

$(\Leftarrow)$ Assume $R$ is ideal extending. Let $X \trianglelefteq A$ and $Y$ the set of $n$-by-$n$ matrices over $X$. Then $Y \trianglelefteq R$. So there exists $e \in \mathcal{B}(R)$ such that $Y$ is ideal essential in $eR$ and $e = c1_R$ where $c \in\mathcal{B}(A)$. Then $X$ is ideal essential in $cA$. Therefore $A$ is ideal extending. 
\end{proof}

Assume $R$ is ring isomorphic to $R_m$ and to $R_n$, where $R_m$ is an $m$-by-$m$ generalized matrix ring, and $R_n$ is an $n$-by-$n$ generalized matrix ring, with $0 < m < n$. One may naturally ask:

\begin{enumerate}
	\item If $R_m \in \mathcal{T}_m$, must $R_n \in \mathcal{T}_n$?
	\item If $R_n \in \mathcal{T}_n$, must $R_m \in \mathcal{T}_m$?
\end{enumerate}

The following example shows that, in general, neither question has an affirmative answer.

\begin{example}
Let $A$ be a ring.

\begin{enumerate}
	\item Let $$R = \left[ \begin{array}{ccc} A & A & A\\
	A & A & A \\
	0 & 0 & A\end{array}\right].$$
	
	\noindent Then $R \not\in \mathcal{T}_3$, because $E_{22} \not\in \mathfrak{P}_{\rm it}(R)$ by Theorem 2.2. 
	
	Let $R_1 = \left[ \begin{array}{cc} A & A \\
	A & A \end{array}\right], \ M_{12} = \left[ \begin{array}{c} A\\ A\end{array}\right]$, $M_{21} = [0 \ 0 ]$ and $R_2 = A$. Then $R \cong \left[ \begin{array}{ll} R_1 & M_{21}\\
	M_{21} & R_2 \end{array}\right] \in \mathcal{T}_2$.
	
	\item Let
	$$R = \left[ \begin{array}{ccccc} A & A & A & A\\
	0 & A & 0 & 0 \\
	0 & A & A & A \\
	0 & A & 0 & A\end{array}\right].$$
	
	\noindent Then $R \in \mathcal{T}_4$. 
	
	Let $R_1 = \left[ \begin{array}{ll} A & A\\
	0 & A \end{array}\right], M_{12} = \left[ \begin{array}{ll} A & A\\
	0 & 0 \end{array}\right]$, $M_{21} = \left[ \begin{array}{ll} 0 & A \\
	0 & A\end{array}\right]$ and $R_2 = \left[ \begin{array}{ll} A & A\\
	0 & A\end{array}\right]$. Then $R \cong \left[ \begin{array}{cc} R_1 & M_{12} \\
	M_{21} & R_2 \end{array}\right] \not\in \mathcal{T}_2$, since $M_{12} M_{21} \neq 0$.
	
\end{enumerate}
\end{example}\smallskip

\begin{proposition}
Let $R$ be a ring with a complete set $\{e_i\}^n_{i=1} \ (n> 1)$ of orthogonal idempotents. If $\{e_i\}_{i=1}^n \subseteq \mathfrak{P}_{\rm t}(R)$, then any partition of $R^\pi$ into an $m$-by-$m$ block form is in $\mathcal{T}_m$ where $m \leq n$.
\end{proposition}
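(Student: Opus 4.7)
The plan is to reduce the statement to Theorem~2.2 applied to the coarser complete set of orthogonal idempotents induced by the block partition, and then to use Lemma~2.1(2) together with the hypothesis $\{e_i\}_{i=1}^n\subseteq\mathfrak{P}_{\rm t}(R)\subseteq\mathfrak{P}_{\rm ot}(R)$.

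First I would fix a partition $\{1,\ldots,n\}=B_1\sqcup\cdots\sqcup B_m$ corresponding to the chosen $m$-by-$m$ block form, and set $f_k=\sum_{i\in B_k}e_i$ for $k=1,\ldots,m$. Since the $e_i$ are pairwise orthogonal idempotents summing to $1$, so are the $f_k$, and the block partition of $R^\pi$ is precisely the Peirce decomposition of $R$ with respect to $\{f_1,\ldots,f_m\}$, i.e.\ it equals $R^{\pi'}$ where $\pi'$ refers to the complete set $\{f_k\}_{k=1}^m$. By Theorem~2.2, it then suffices to show that each $f_k\in\mathfrak{P}_{\rm it}(R)$, equivalently (by Lemma~2.1(1)) that $f_k R f_l R f_k=0$ whenever $k\ne l$.

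Next I would expand
\[
f_k R f_l R f_k \;=\; \sum_{i\in B_k,\; j\in B_l,\; s\in B_k} e_i R e_j R e_s,
\]
and observe that for each summand we have $j\in B_l$ while $i,s\in B_k$, so (since $k\ne l$) both $i\ne j$ and $s\ne j$. By hypothesis every $e_j$ lies in $\mathfrak{P}_{\rm t}(R)\subseteq\mathfrak{P}_{\rm ot}(R)$, so Lemma~2.1(2) applied to the complete set $\{e_1,\ldots,e_n\}$ gives $e_i R e_j R e_s=0$ for all $i\ne j$ and $s\ne j$. Hence each summand vanishes, so $f_k Rf_l Rf_k=0$ as required.

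This is really the only step of substance; there is no serious obstacle, since Lemma~2.1 already packaged the key computation and the hypothesis supplies outer Peirce triviality for every $e_j$. Once the vanishing is established, Theorem~2.2 immediately yields $R^{\pi'}\in\mathcal{T}_m$, which is the desired conclusion.
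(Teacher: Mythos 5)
Your proof is correct, and since the paper dismisses this proposition with ``The proof is straightforward,'' your argument is precisely the intended one: pass to the coarser complete set $f_k=\sum_{i\in B_k}e_i$, expand $f_kRf_lRf_k$ into terms $e_iRe_jRe_s$, kill each term using outer Peirce triviality of the middle idempotent via Lemma~2.1(2), and conclude with Theorem~2.2. Your use of $\mathfrak{P}_{\rm ot}$ rather than $\mathfrak{P}_{\rm it}$ for the middle factor is exactly the right point of substance, since $i$ and $s$ may differ within a block --- which is why the hypothesis demands $\mathfrak{P}_{\rm t}(R)$ and not merely $\mathfrak{P}_{\rm it}(R)$.
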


\begin{proof}
The proof is straightforward. 
\end{proof}

Note that in Proposition 2.23 if $n =3$, then we can replace $\mathfrak{P}_{\rm t}(R)$ with $\mathfrak{P}_{\rm it}(R)$.

\medskip

\noindent QUESTION: Let $R$ be a ring with a complete set $\{e_i\}^n_{i=1}  \ (n> 1)$ of orthogonal idempotents. What are necessary and sufficient conditions so that any partition of $R^\pi$ into $m$-by-$m$ block form is in $\mathcal{T}_m$ for $1 < m \leq n$?

\vskip0.5cm

\section{$n$-Peirce Rings}

\begin{definition}
A ring $R$ is called a  {\it 1-Peirce ring} if $\mathfrak{P}_{\rm t}(R) =\{0,1\}$, with $0\ne 1.$ Inductively, for a natural number $n > 1$, a ring $R$ is called an {\it $n$-Peirce} ring if there is an $e \in \mathfrak{P}_{\rm t}(R)$ such that $eRe$ is an $m$-Peirce ring for some $1 \leq m < n$ and $(1-e)R(1-e)$ is an $(n-m)$-Peirce ring.\end{definition}

\begin{example} (1)  If $R_R$ is indecomposable or $R$ is prime, then $R$ is 1-Peirce. In fact, if $R$ is semiprime then $R$ is 1-Peirce if and only if $R$ is indecomposable (as a ring).

     (2)  If $R$ is an $n$-by-$n$ generalized upper (lower)  triangular matrix ring with 1-Peirce diagonal rings, then $R$ is a $n$-Pierce ring.
		
     (3)  If $R$ has a complete set of $n$ orthogonal primitive idempotents which are Peirce trivial, then $R$ is an $n$-Peirce ring (e.g., see Example 2.5(3)).
		
     (4)  Example 2.5(2) is a 3-Peirce ring that has a complete set of three primitive idempotents which are inner Peirce trivial but not all of them are Peirce 
          trivial.
					
     (5)  Let $I$ be be an infinite index set, for each $i\in I$ let $A_i$ be a ring with only trivial idempotents, and $A = \Pi_{i\in I} A_i$.  Assume 
          $R = \left[ \begin{array}{cc} A & X\\ 0 & X\\ \end{array}\right],$ where $X$ is a nonzero ideal of $A$.  Then $R$ is in $\mathcal{T}_2$, but $R$ is not an $n$-Peirce ring for any positive integer $n$.
\end{example}               
          
 From Example 3.2(5) and Theorem 3.7, we see that the class of $n$-Peirce $n$-by-$n$ generalized matrix rings is a proper subclass of the class $\mathcal{T}_n$ for $n > 1$.  Also, due to the symmetry
          of Peirce idempotents (i.e., $e\in \mathfrak{P}_{\rm t}(R)$ if and only if $1-e\in\mathfrak{P}_{\rm t}(R)$) the class of $n$-Peirce rings exhibits better behavior than $\mathcal{T}_n$ with respect to finiteness conditions.

\begin{theorem} Let $R$ be a ring. Then either:

(1) $R$ is a 1-Peirce ring;

(2) R is an $n$-Peirce ring for $n>1$, and for each $k\in\Bbb Z^+$ with $1 < k \le n$ there exists a complete set of orthogonal idempotents, $\{e_i\}_{i=1}^k,$ such that $R^\pi \in \mathcal{T}_k$; or

(3) for each integer $k$ with $k > 1$, there exists a complete set of orthogonal idempotents, $\{e_i\}_{i=1}^k$, such that $R^\pi \in \mathcal{T}_k$. 
\end{theorem}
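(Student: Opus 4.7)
The plan is to argue by a trichotomy based on whether $R$ is $1$-Peirce and, if not, whether a recursive Peirce trivial splitting procedure can be driven to a finite terminal state. Case~(1) is immediate: if $\mathfrak{P}_{\rm t}(R)=\{0,1\}$ then $R$ is $1$-Peirce by definition. Throughout the remainder I assume $R$ is not $1$-Peirce, so there exists a nontrivial $e_1\in\mathfrak{P}_{\rm t}(R)$.

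The key technical step will be the following \emph{Lifting Lemma}: if $h\in\mathfrak{P}_{\rm it}(R)$ and $h'\in\mathfrak{P}_{\rm it}(hRh)$ with $h'=hh'h$, then $h'\in\mathfrak{P}_{\rm it}(R)$. I would prove it by decomposing $1-h'=(h-h')+(1-h)$: the summand $h'R(1-h)Rh'$ sits inside $hR(1-h)Rh=0$ by the hypothesis on $h$, while any element $h'r_1(h-h')r_2h'$ rewrites (using $h'=hh'h$, $h-h'=h(h-h')h$, and $(h-h')^2=h-h'$) as $h\bigl[h'(hr_1h)(h-h')(hr_2h)h'\bigr]h$, whose bracket lies in $h'(hRh)(h-h')(hRh)h'=0$ by inner Peirce triviality of $h'$ inside $hRh$. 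Combined with Corollary~1.6 --- which propagates $h'\in\mathfrak{P}_{\rm t}(hRh)$ to $h-h'\in\mathfrak{P}_{\rm t}(hRh)\subseteq\mathfrak{P}_{\rm it}(hRh)$ --- the lemma shows that splitting a member $h\in\mathfrak{P}_{\rm it}(R)$ of the current set by a Peirce trivial idempotent of $hRh$ keeps both resulting pieces in $\mathfrak{P}_{\rm it}(R)$.

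I would then run the following refinement procedure. Starting from $\{1\}$, maintain a complete set $\{h_1,\ldots,h_j\}$ of orthogonal idempotents contained in $\mathfrak{P}_{\rm it}(R)$; while some $h_iRh_i$ fails to be $1$-Peirce, choose a nontrivial $h'\in\mathfrak{P}_{\rm t}(h_iRh_i)$ and replace $h_i$ by the pair $h',h_i-h'$. The Lifting Lemma preserves the invariant, and Theorem~2.2 gives $R^\pi\in\mathcal{T}_j$ at every stage. Reading the split history in reverse shows, conversely, that any terminal configuration of size $n$ in which every $h_iRh_i$ is $1$-Peirce certifies $R$ as an $n$-Peirce ring via the inductive definition.

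The case split is then clean. If the procedure admits a finite terminal execution of size $n$, halting it after exactly $k-1$ splits for each $1<k\le n$ furnishes case~(2). Otherwise $R$ is not $n$-Peirce for any $n$ (since any such witness would supply a terminal execution), and at each intermediate size $j$ at least one corner $h_iRh_i$ must fail to be $1$-Peirce --- otherwise $R$ would already be $j$-Peirce --- so the procedure extends to size $j+1$ and reaches every $k>1$, giving case~(3). The Lifting Lemma is the only nontrivial computation and is the step I expect to require the most care; the remainder is a bookkeeping argument correlating termination of the procedure with the $n$-Peirce property.
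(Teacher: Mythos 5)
Your proposal is correct and takes essentially the same route as the paper: the paper runs the identical splitting procedure, citing its Lemma 1.8(1) (which is precisely your Lifting Lemma) to pull Peirce trivial idempotents of the corner rings into $\mathfrak{P}_{\rm it}(R)$, invoking Theorem 2.2 at each stage to get $R^\pi\in\mathcal{T}_k$, and ending with the same terminate-or-continue dichotomy. The only differences are expository --- you reprove the lifting step by direct computation rather than citing it, and you spell out the bookkeeping (a terminal execution of size $n$ certifies the $n$-Peirce property, and an $n$-Peirce witness yields a terminal execution) that the paper leaves implicit.
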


\begin{proof} Observe that $\{0, 1\}\subseteq \mathfrak{P}_{\rm t}(R)$, and $\{0, 1\} = \mathfrak{P}_{\rm t}(R)$ if and only if $R$ is a 1-Peirce ring.  If $\{0, 1\} \ne \mathfrak{ P}_{\rm t}(R)$ (i.e., $R$ is not 1-Peirce), then  there exists $\{e_1,1-e_1\}\subseteq \mathfrak{P}_{\rm t}(R)\setminus\{0,1\}.$ By Theorem 2.2, 
$R^\pi \in \mathcal{T}_2$. Now if at least one of $e_1Re_1$ or $(1-e_1)R(1-e_1)$ is not 1-Peirce, say $e_1Re_1$, then there exists $e_2\in\mathfrak{P}_{\rm t}(e_1Re_1)\setminus\{0,e_1\}.$ By Lemma 1.8, $\{e_2,e_1-e_2\}\subseteq \mathfrak{P}_{\rm t}(e_1Re_1) \subseteq \mathfrak{P}_{\rm it}(e_1Re_1) =  e_1Re_1 \cap  \mathfrak{P}_{\rm it}(R) \subseteq  \mathfrak{P}_{\rm it}(R).$ Hence $\{e_2,e_1-e_2,1-e_1\}$ is a complete set of orthogonal idempotents contained in $\mathfrak{P}_{\rm it}(R).$ By Theorem 2.2, 
$R^\pi \in \mathcal{T}_3$. If at least one of $e_2Re_2, \ (e_1-e_2)R(e_1-e_2),$ or $(1-e_1)R(1-e_1)$ is not 1-Peirce, then either this inductive process will terminate in $n$ steps for some $n\in\Bbb Z^+$ yielding condition (2) or it will continue indefinitely yielding condition (3).
\end{proof}

Note that in Example 2.22(2), $R$ has a 2-by-2 block form which is not in $\mathcal{T}_2$.  Observe that one can show that $E_{11}\in\mathfrak{P}_{\rm t}(R)$, so $R$ is not 1-Peirce.   Surprisingly, Theorem 3.3 predicts that there is a 2-by-2 block form for $R$ which is in $\mathcal{T}_2$.  Indeed, $R$ can be  partitioned  into another 2-by-2 block form which is in $\mathcal{T}_2$ by taking $R_1$ to be a 1-by-1 matrix and $R_2$ to be a 3-by-3 matrix (i.e., this corresponds to taking $e_1 = E_{11}$ and $1-e_1 = E_{22} + E_{33} + E_{44}$ in the proof of Theorem 3.3).

\begin{proposition}
Let $0 \neq e =e^2 \in R$ such that $eRe$ is a 1-Peirce ring, $c \in \mathfrak{P}_{\rm t}(R)$, $c_1 \in \mathfrak{P}_{\rm t}(cRc), 0\neq cec$ and $c_1 e c_1 \neq0$. Then:

\begin{enumerate}
	\item $ece = e=ec_1 e$.
	\item $cecRcec$ is a 1-Peirce ring.
\end{enumerate}
\end{proposition}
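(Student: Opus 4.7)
The plan is to exploit the hypothesis that $eRe$ is 1-Peirce, i.e.\ $\mathfrak{P}_{\rm t}(eRe)=\{0,e\}$. To prove $ece=e$ and $ec_1e=e$ in part~(1), it suffices to show that each is a nonzero Peirce trivial idempotent of $eRe$. Throughout, since $c\in\mathfrak{P}_{\rm it}(R)$, the Peirce map $\pi_c\colon R\to cRc$ defined by $\pi_c(r)=crc$ is a surjective ring homomorphism (Lemma~1.4(5)).

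First I prove $ece=e$. By Lemma~1.7(2) applied to the Peirce trivial $c$ and the idempotent $e$, $ece$ is idempotent; and by Lemma~1.7(1), $cec=(cec)^2=c(ece)c$, so $ece=0$ would force $cec=0$, contradicting the hypothesis. A direct calculation shows that for $x,y\in eRe$ the inner defining expression $(ece)x(e-ece)y(ece)$ simplifies to $e[cx(1-c)yc]e$, which lies in $e\cdot cR(1-c)Rc\cdot e=0$ by inner Peirce triviality of $c$; and the outer defining expression simplifies to $e[(1-c)xcy(1-c)]e\in e\cdot(1-c)RcR(1-c)\cdot e=0$ by outer Peirce triviality of $c$. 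Thus $ece\in\mathfrak{P}_{\rm t}(eRe)\setminus\{0\}=\{e\}$, yielding $ece=e$.

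Next I use $ece=e$ to prove $ec_1e=e$. The equality $ece=e$ yields the critical identities $ecx=x=xce$ for every $x\in eRe$ (expand $x=exe$ and use $ec\cdot e=ece=e$); consequently $x=ec\cdot x\cdot ce=e(cxc)e$, so $\pi_c|_{eRe}$ is injective. Lemma~1.7(2) applied in $cRc$ to $c_1\in\mathfrak{P}_{\rm t}(cRc)$ and the idempotent $cec$ shows that $(cec)c_1(cec)=\pi_c(ec_1e)$ is idempotent in $cRc$; injectivity then upgrades this to $(ec_1e)^2=ec_1e$. Similarly, Lemma~1.7(1) in $cRc$ gives $c_1ec_1=c_1(cec)c_1=(c_1(cec)c_1)^2=c_1(ec_1e)c_1$, so $c_1ec_1\neq0$ forces $ec_1e\neq0$. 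To verify $ec_1e\in\mathfrak{P}_{\rm t}(eRe)$, the inner and outer defining expressions at $x,y\in eRe$ simplify to $e[c_1x(1-c_1)yc_1]e$ and $e[(1-c_1)xc_1y(1-c_1)]e$ respectively. Splitting $1-c_1=(1-c)+(c-c_1)$, the ``diagonal'' pieces vanish by the Peirce trivialities of $c$ in $R$ and of $c_1$ in $cRc$ (inserting $c$'s on the outside to place the flanking factors in $cRc$). The main obstacle is the two cross pieces in the outer expression, e.g.\ $e(1-c)xc_1y(c-c_1)e$, which are not killed by Peirce triviality of $c$ or $c_1$ alone; they are dispatched by the new identities, since $ecx=x$ for $x\in eRe$ gives $e(1-c)x=ex-ecx=0$, and symmetrically $y(1-c)e=0$. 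Thus $ec_1e\in\mathfrak{P}_{\rm t}(eRe)\setminus\{0\}$, so $ec_1e=e$.

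For part~(2), the identities $ecx=x=xce$ for $x\in eRe$ yield $eRe=ecRce$: the inclusion $ecRce\subseteq eRe$ is straightforward, while any $x\in eRe$ satisfies $x=ec\cdot x\cdot ce\in ecRce$. Therefore $\pi_c(eRe)=c(eRe)c=c(ecRce)c=cec\cdot R\cdot cec=cecRcec$, and the injective ring homomorphism $\pi_c|_{eRe}$ is a ring isomorphism $eRe\cong cecRcec$. Since $eRe$ is 1-Peirce, so is $cecRcec$.
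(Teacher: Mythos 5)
Your proposal is correct, and while its treatment of $ece=e$ coincides with the paper's (same nonzero-ness argument via $cec=c(ece)c$, same vanishing computations using the inner and outer trivialities of $c$), it diverges from the paper in two worthwhile ways. First, for $ec_1e$: the paper never explicitly verifies that $ec_1e$ is an idempotent (a prerequisite for $ec_1e\in\mathfrak{P}_{\rm t}(eRe)$); you supply this by transporting idempotency of $(cec)c_1(cec)$ back through the injective homomorphism $\pi_c|_{eRe}$, where injectivity comes from your identities $ecx=x=xce$. Also, where you split $1-c_1=(1-c)+(c-c_1)$ and kill the cross terms with $e(1-c)x=0=y(1-c)e$, the paper instead substitutes $e=ece$ to rewrite $e-ec_1e=e(c-c_1)e$, so that all flanking factors land in the corner $cRc$ at once and only the outer triviality of $c_1$ in $cRc$ is needed; the two computations use the same key fact ($ece=e$) but organize it differently. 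Second, and more substantially, your part (2) is a genuinely different route: the paper takes an arbitrary $0\neq f\in\mathfrak{P}_{\rm t}(cecRcec)$, pulls it back to $efe$, and runs three more claims (idempotency, inner, outer) to force $efe=e$ and hence $f=c(efe)c=cec$; you instead observe that $eRe=ecRce$, so $\pi_c$ restricts to a ring isomorphism $eRe\cong cecRcec$, and 1-Peirce-ness transports across isomorphisms. Your argument is shorter and more conceptual --- it explains structurally why the paper's pullback $f\mapsto efe$ behaves so well --- at the cost of relying on the homomorphism property of $r\mapsto crc$ (Lemma 1.4(5)), whereas the paper's proof is purely equational and self-contained at the level of individual elements.
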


\begin{proof} (1) By Lemma 1.7, $0 \neq c ec = (cec)^2 = c(ece)c$. Thus $ece\neq 0$. From Lemma 1.8(1), $c_1 \in \mathfrak{P}_{\rm it}(R)$. So, by Lemma 1.7, $c_1ec_1 = (c_1ec_1)^2 = c_1(ec_1e)c_1$. Hence $ec_1e \neq 0$.

\smallskip

Claim 1. $ece \in \mathfrak{P}_{\rm it} (eRe)$. 

Let $x,y \in eRe$. By Lemma 1.7, $ece=(ece)^2$. Consider $(ece)x(e-ece)y(ece) =e(c[exe(1-c)eye]c)e=e0e=0$, because $c \in \mathfrak{P}_{\rm t}(R) \subseteq \mathfrak{P}_{\rm it}(R)$. Thus $ece \in \mathfrak{P}_{\rm it}(eRe)$.

\smallskip

Claim 2. $ece \in \mathfrak{P}_{\rm t}(eRe)$. 

Consider $(e-ece)xecey(e-ece) = e[(1-c)execeye(1-c)]e=e0e=0$, because $c \in \mathfrak{P}_{\rm ot}(R)$. Thus $ece \in \mathfrak{P}_{\rm ot}(eRe)$. Hence $0\neq ece \in \mathfrak{P}_{\rm t}(eRe)$. Since $eRe$ is $1$-Peirce, $ece=e$.

\smallskip

Claim 3. $ec_1e \in \mathfrak{P}_{\rm it}(eRe)$. 

From above, $c_1 \in \mathfrak{P}_{\rm it}(R)$. Let $x,y \in eRe$. Then $ec_1ex(e-ec_1e)y ec_1e= e[c_1xe(1-c_1)eyc_1]e=0$. Hence $ec_1e \in \mathfrak{P}_{\rm it} (eRe)$.

\smallskip

Claim 4. $ec_1e\in \mathfrak{P}_{\rm t}(eRe)$. 

Since $e =  ece,$ we have

$\begin{array}{lll}  (e-ec_1e)xec_1ey(e-ec_1e) & = & (ece-ec_1e)xc_1y(ece-ec_1e) \\ 
& = & e(c-c_1) exc_1ye(c-c_1)e\\ 
& = & e[(c-c_1)(cxc)c_1(cyc)(c-c_1)]e=0, \end{array}$

\noindent because $c_1 \in \mathfrak{P}_{\rm t}(cRc) \subseteq \mathfrak{P}_{\rm ot}(cRc)$. Thus $ec_1e \in \mathfrak{P}_{\rm t}(eRe)$. Since $eRe$ is a 1-Peirce ring, $ec_1e=e$.

\smallskip

(2) Let $0 \neq f \in \mathfrak{P}_{\rm t}(cecRcec)$. Observe $0 \neq f = c[ecfce]c=c[efe]c$ since $c \in \mathfrak{P}_{\rm t}(R)$. So $efe\neq 0$.

\smallskip

Claim 5. $efe= (efe)^2$.

Observe $f= (cec)f=c(cecf) = cf$. Similarly, $f=fc$. Consider $efe = e(fcecf)e=efefe=(efe)^2$.

\smallskip

Claim 6. $efe\in \mathfrak{P}_{\rm it}(eRe)$.

Let $x,y \in eRe$. By (1) $e=ece$, so 

$\begin{array}{lll}
efex(e-efe)yefe & = & efex(ece-efe) yefe= efexe(c-f)eyefe\\ 
& =& e(fc)exe(c-f)eye(cf)e\\
&= & ef[(cec)x(cec)(cec)(c-f)(cec)y(cec)]fe\\ 
&=& ef[(cec)x(cec)f(cec)(c-f)(cec)y(cec)]fe\\ 
&= & ef[(cec)x(cec)f(c-f)(cec)y(cec)]fe\\ 
& =& ef[(cec)x(cec)0(cec)y(cec)]fe\\ 
&= & 0,  \end{array}$

\noindent since $f \in \mathfrak{P}_{\rm t}(cecRcec)\subseteq \mathfrak{P}_{\rm it}(cecRcec)$.

\smallskip

Claim 7. $efe \in \mathfrak{P}_{\rm t}(eRe)$.

Consider 

$\begin{array}{lll}
(e-efe)xefey(e-efe) & =& (ece-efe)xfy(ece-efe)\\
& =& e(c-f)exfye(c-f)e\\
& =& ece(c-f)xfy(c-f)ece\\
& =& ecec(c-f)(cecxcec)f(cecycec)(c-f)ece\\
&= & e(cec-f)[(cecxcec)f(cecycec)](cec-f)e\\

&= & 0,  \end{array}$

\noindent since $f \in \mathfrak{P}_{\rm ot}(cecRcec).$ Thus $efe \in \mathfrak{P}_{\rm t}(eRe)$. Hence $efe=e$. So $0 \neq f = cecfcec=c(efe)c=cec$. Therefore $cecRcec$ is a 1-Peirce ring.
\end{proof}

Observe that in Proposition 3.4 the conclusions $ece=e$ and $cecRcec$ is a 1-Peirce ring do not need the conditions $c_1 \in \mathfrak{P}_{\rm t}(cRc)$ and $c_1ec_1 \neq 0$. Also, this result can be extended under related hypotheses (e.g. $e$ primitive and $c \in \mathfrak{P}_{\rm it}(R)$).

\begin{corollary}
Let $0 \neq e = e^2 \in R$ such that $eRe$ is a 1-Peirce ring and $c \in \mathfrak{P}_{\rm t}(R)$. The following conditions are equivalent:

\begin{enumerate}
	\item $cec\neq 0$.
	\item $ece=e$.
	\item $(1-c)e(1-c)=0$.
\end{enumerate}
\end{corollary}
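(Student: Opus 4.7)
The plan is to establish the cycle $(1) \Rightarrow (2) \Rightarrow (3) \Rightarrow (1)$ using Proposition~3.4 and the outer Peirce triviality of $c$.

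The implication $(1) \Rightarrow (2)$ is an immediate application of Proposition~3.4(1): the remark following that result explicitly notes that the conclusion $ece = e$ does not require the auxiliary hypotheses on $c_1$, so $c \in \mathfrak{P}_{\rm t}(R)$, $cec \ne 0$, and $eRe$ being $1$-Peirce already suffice.

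For $(2) \Rightarrow (3)$, I would exploit that $c \in \mathfrak{P}_{\rm t}(R) \subseteq \mathfrak{P}_{\rm ot}(R)$, i.e., $(1-c)RcR(1-c) = 0$. Substituting the hypothesis $e = ece$ yields
\[
(1-c)e(1-c) = (1-c)(ece)(1-c) = \bigl((1-c)e\bigr)\,c\,\bigl(e(1-c)\bigr) \in (1-c)RcR(1-c) = 0.
\]

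For $(3) \Rightarrow (1)$, I would argue by contradiction. Assume $(1-c)e(1-c) = 0$ and, for contradiction, $cec = 0$. Expanding the first relation gives $e = ec + ce$. Squaring, three of the four terms vanish, namely $ecec = e(cec) = 0$, $ceec = ce^2c = cec = 0$, and $cece = (cec)e = 0$; the remaining term $(ec)(ce) = ec^2 e = ece$ survives using $c^2 = c$. Hence $e = e^2 = ece$. But then $ce = c(ece) = (cec)e = 0$ and symmetrically $ec = (ece)c = e(cec) = 0$, forcing $e = ec + ce = 0$, contrary to $e \ne 0$.

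The only step requiring any care is $(3) \Rightarrow (1)$, where the interaction between the two assumed vanishing conditions cascades through the idempotency of $e$ to collapse $e$ itself; the hypothesis that $eRe$ is $1$-Peirce is not needed in that direction. The other implications reduce directly to Proposition~3.4 and to unpacking the definition of outer Peirce triviality.
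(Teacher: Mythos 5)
Your proof is correct, and while it follows the same overall skeleton as the paper's --- the cycle $(1)\Rightarrow(2)\Rightarrow(3)\Rightarrow(1)$, with $(1)\Rightarrow(2)$ obtained exactly as in the paper by citing Proposition~3.4 together with the remark that the hypotheses on $c_1$ are superfluous --- the last two implications are handled by genuinely different computations, and your $(3)\Rightarrow(1)$ is more elementary than the paper's. For $(2)\Rightarrow(3)$ the paper first shows $e(1-c)e = ece(1-c)ece = 0$ using \emph{inner} Peirce triviality of $c$, and then invokes Lemma~1.7(1) to see that the idempotent $(1-c)e(1-c)$ equals its own square and hence vanishes; your one-line computation $(1-c)(ece)(1-c) = \bigl((1-c)e\bigr)c\bigl(e(1-c)\bigr) \in (1-c)RcR(1-c) = 0$ uses only \emph{outer} Peirce triviality and avoids the lemma. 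For $(3)\Rightarrow(1)$ the paper decomposes $e = cec + ce(1-c) + (1-c)ec + (1-c)e(1-c) = ce(1-c)+(1-c)ec$, squares, and needs both inner and outer Peirce triviality of $c$ to annihilate the two surviving cross terms; you instead rearrange to $e = ec + ce$, square to get $e = ece$, and then deduce $ce = (cec)e = 0$ and $ec = e(cec) = 0$, forcing $e = 0$ --- a computation using only $c = c^2$, $e = e^2$, and $cec = 0$, with no Peirce-triviality hypothesis whatsoever. Thus your argument establishes the stronger fact that for \emph{arbitrary} idempotents $c, e$ in any ring, $(1-c)e(1-c) = 0$ and $e \neq 0$ together force $cec \neq 0$, whereas the paper's version of this step is tied to $c \in \mathfrak{P}_{\rm t}(R)$; your closing observation that the 1-Peirce hypothesis on $eRe$ enters only in $(1)\Rightarrow(2)$ is accurate for both proofs.
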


\begin{proof}
(1) $\Rightarrow$ (2) This implication follows from Proposition 3.4.

(2) $\Rightarrow$ (3) Since $c \in \mathfrak{P}_{\rm t}(R)$, $e(1-c)e=ece(1-c)ece=0$. By Lemma 1.7(1), $(1-c)e(1-c)=[(1-c)e(1-c)]^2=(1-c)e(1-c)e(1-c)=0$.

(3) $\Rightarrow$ (1) Assume $(1-c)e(1-c)=0$ and $cec=0$. Then

$\begin{array}{lll} e &= & (c+(1-c)) e(c+(1-c))\\
\\
& =& cec + ce (1-c)+(1-c)ec +(1-c)e(1-c)\\
\\
& =& ce(1-c)+(1-c)ec. \end{array}$

So $e = e^2=(ce(1-c)+(1-c)ec)^2 = ce(1-c)ce(1-c)+ce(1-c)ec+(1-c)ece(1-c)+(1-c)ec(1-c)ec=0$, a contradiction.
\end{proof}

\begin{corollary}
Let $\{e_i\}^n_{i=1}$ be a complete set of nonzero orthogonal idempotents such that each $e_iRe_i$ is a 1-Peirce ring and $0 \neq c\in \mathfrak{P}_{\rm t}(R)$. Then:

\begin{enumerate}
	\item $c= \sum_{i\in J_1} ce_ic$ and $1-c = \sum_{i \in J_2}(1-c)e_i(1-c)$, where $ce_ic\neq 0$ for all $i \in J_1$ and $(1-c)e_i(1-c)\neq 0$ for all $i \in J_2$.
	\item $|J_1| + |J_2| = n$.
	\item $\{ce_ic \ \vert \ i \in J_1\} \cup \{(1-c)e_i(1-c) \ \vert \ i \in J_2\}$ is a complete set of orthogonal idempotents, where each $ce_icRce_ic$ and each $(1-c)e_i(1-c)R(1-c)e_i(1-c)$ is a 1-Peirce ring.
\end{enumerate}
\end{corollary}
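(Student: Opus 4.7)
The plan is to exploit the dichotomy supplied by Corollary~3.5 applied to each $e_i$, together with the fact (Lemma~1.4(5)) that $x \mapsto cxc$ and $x \mapsto (1-c)x(1-c)$ are ring homomorphisms from $R$ onto $cRc$ and $(1-c)R(1-c)$ respectively (since $c, 1-c \in \mathfrak{P}_{\rm it}(R)$ by Corollary~1.6).

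First I would define $J_1 = \{i : ce_ic \neq 0\}$ and $J_2 = \{i : (1-c)e_i(1-c) \neq 0\}$. Apply Corollary~3.5 to the 1-Peirce ring $e_iRe_i$ and the Peirce trivial idempotent $c$: then $ce_ic \neq 0$ is equivalent to $(1-c)e_i(1-c) = 0$. Applying the same corollary with $1-c$ in place of $c$ (valid since $1-c \in \mathfrak{P}_{\rm t}(R)$), $(1-c)e_i(1-c) \neq 0$ is equivalent to $ce_ic = 0$. Hence $J_1$ and $J_2$ partition $\{1,\ldots,n\}$, which yields $|J_1|+|J_2| = n$, proving~(2). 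For~(1), since $\{e_i\}$ is a complete set of orthogonal idempotents, $c = c\cdot 1\cdot c = \sum_{i=1}^n ce_ic = \sum_{i \in J_1} ce_ic$, and symmetrically for $1-c$.

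For~(3), the idempotency and orthogonality follow almost immediately from the homomorphism viewpoint. Let $\varphi_c, \varphi_{1-c}\colon R \to R$ be the conjugation-type maps $\varphi_c(x)=cxc$ and $\varphi_{1-c}(x)=(1-c)x(1-c)$; by Lemma~1.4(5) each is a ring homomorphism onto the respective corner ring. Thus for $i,j \in J_1$, $(ce_ic)(ce_jc) = \varphi_c(e_ie_j) = \delta_{ij}\,ce_ic$, giving both idempotency and orthogonality within the $J_1$ family; the $J_2$ family is handled identically using $\varphi_{1-c}$. Cross-orthogonality is trivial since $c(1-c)=0$ sandwiches every product $(ce_ic)((1-c)e_j(1-c))$. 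Completeness is exactly the sum computed in~(1), which totals $c + (1-c) = 1$. Finally, for the 1-Peirce property of each corner, apply Proposition~3.4(2) (or rather the strengthening noted in the remark following its proof, which removes the auxiliary hypothesis on $c_1$) with $e = e_i$ and the Peirce trivial idempotent $c$: since $ce_ic \neq 0$ for $i \in J_1$ and $e_iRe_i$ is 1-Peirce, $ce_icRce_ic$ is 1-Peirce. The same argument with $1-c$ in place of $c$ disposes of the $J_2$ corners.

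No single step is hard; the only place requiring care is confirming that Proposition~3.4(2) can legitimately be invoked without the $c_1$ hypothesis, which is precisely the content of the remark immediately after Proposition~3.4 and ensures that the 1-Peirce conclusion for each $ce_icRce_ic$ and $(1-c)e_i(1-c)R(1-c)e_i(1-c)$ goes through.
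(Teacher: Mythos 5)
Your proposal is correct and follows essentially the same route as the paper: both define $J_1,J_2$ via nonvanishing of $ce_ic$ and $(1-c)e_i(1-c)$, get (2) from Corollary~3.5, get (1) by expanding $c=c1c$ over the complete set, and get the 1-Peirce claim in (3) from Proposition~3.4(2) (relying, as you correctly flag, on the remark after that proposition removing the $c_1$ hypotheses). The only cosmetic difference is that you obtain idempotency and orthogonality in one stroke from the ring homomorphism $x\mapsto cxc$ of Lemma~1.4(5), where the paper computes orthogonality directly from inner Peirce triviality and cites Lemma~1.7 for idempotency --- the same underlying fact, packaged differently.
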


\begin{proof} (1) $c = c1c=c\sum_{i\in J_1}e_ic = \sum_{i \in J_1}ce_ic$, and similarly, $1-c = \sum_{i\in J_2} (1-c)e_i(1-c)$, where $J_1 \cup J_2 \subseteq \{1, \ldots, n\}$.
	
	(2) This part follows from Corollary 3.5.
	
	(3) Note that $1 = c+1-c = \sum_{i\in J_1} ce_ic + \sum_{i\in J_2}(1-c)e_i(1-c)$. Also, $(ce_ic)ce_jc = ce_ie_j c =0$ for all $i\neq j$, since $c \in \mathfrak{P}_{\rm t}(R)$. Similarly, $[(1-c)e_i(1-c)][(1-c)e_j(1-c)]=0$ for all $i \neq j$. Moreover $[(1-c)e_i(1-c)][ce_jc] = 0= [ce_ic][(1-c)e_j(1-c)]$ for all $i, j$. By Lemma 1.7, $ce_ic$ and $(1-c)e_i(1-c)$ are idempotents for all $i$. From~Proposition~3.4(2), each $ce_i cRce_ic$ and each $(1-c)e_i(1-c)R(1-c)e_i(1-c)$ is a 1-Peirce ring.

\end{proof}

\begin{theorem} (1) If $R$ is an $n$-Peirce ring $(n> 1)$, then there is a complete set of orthogonal idempotents $\{e_i\}^n_{i=1} \subseteq \mathfrak{P}_{\rm it}(R)$ (hence $R^\pi \in \mathcal{T}_n$) such that every $e_iRe_i$ is a 1-Peirce ring.
	
	(2) If a ring $R$ has a complete set $\{e_i\}^n_{i=1}$ of orthogonal idempotents  for some $n \geq 2$ such that every $e_iRe_i$ is a 1-Peirce ring, then $R$ is a $k$-Peirce  ring for some $1 \leq k \leq n$.
\end{theorem}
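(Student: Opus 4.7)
The plan is to prove both parts simultaneously by induction on $n$, relying on Lemma 1.8(1) and Corollary 1.6 for part (1), and on Corollary 3.6 together with strong induction for part (2).

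For (1), I would induct on $n$. When $n=2$, the definition directly supplies an $e \in \mathfrak{P}_{\rm t}(R)$ such that $eRe$ and $(1-e)R(1-e)$ are both $1$-Peirce; setting $e_1=e, e_2=1-e$, Corollary 1.6 yields $e_1,e_2 \in \mathfrak{P}_{\rm it}(R)$, and Theorem 2.2 gives $R^\pi \in \mathcal{T}_2$. For the inductive step, the $n$-Peirce hypothesis produces $e \in \mathfrak{P}_{\rm t}(R)$ with $eRe$ an $m$-Peirce ring and $(1-e)R(1-e)$ an $(n-m)$-Peirce ring for some $1 \leq m < n$. Apply the inductive hypothesis to each corner to obtain complete orthogonal sets $\{f_i\}_{i=1}^m \subseteq \mathfrak{P}_{\rm it}(eRe)$ and $\{g_j\}_{j=1}^{n-m} \subseteq \mathfrak{P}_{\rm it}((1-e)R(1-e))$ with each $f_i(eRe)f_i$ and each $g_j(1-e)R(1-e)g_j$ a $1$-Peirce ring. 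Since $e, 1-e \in \mathfrak{P}_{\rm t}(R) \subseteq \mathfrak{P}_{\rm it}(R)$ (Corollary 1.6), Lemma 1.8(1) yields $\mathfrak{P}_{\rm it}(eRe) = eRe \cap \mathfrak{P}_{\rm it}(R)$ and similarly for $(1-e)R(1-e)$, so every $f_i$ and $g_j$ lies in $\mathfrak{P}_{\rm it}(R)$. The union $\{f_1,\ldots,f_m,g_1,\ldots,g_{n-m}\}$ is then a complete orthogonal set of inner Peirce trivial idempotents of $R$, and since $f_iRf_i = f_i(eRe)f_i$ and $g_jRg_j = g_j(1-e)R(1-e)g_j$, each diagonal corner is $1$-Peirce. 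Theorem 2.2 then gives $R^\pi \in \mathcal{T}_n$.

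For (2), I would use strong induction on $n$, starting from the trivial case $n=1$. If $R$ is itself $1$-Peirce, take $k=1$. Otherwise there exists $c \in \mathfrak{P}_{\rm t}(R)\setminus\{0,1\}$, and I would apply Corollary 3.6 to the given $\{e_i\}_{i=1}^n$ and $c$. This produces disjoint index sets $J_1, J_2 \subseteq \{1,\ldots,n\}$ with $|J_1|+|J_2|=n$ and, because $c\ne 0$ and $1-c\ne 0$, both $|J_1|,|J_2|\geq 1$; moreover the sets $\{ce_ic : i\in J_1\}$ and $\{(1-c)e_i(1-c): i\in J_2\}$ are complete orthogonal sets for $cRc$ and $(1-c)R(1-c)$ whose diagonal corners are all $1$-Peirce. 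Since $|J_1|,|J_2| < n$, the strong inductive hypothesis gives that $cRc$ is a $k_1$-Peirce ring for some $1 \leq k_1 \leq |J_1|$ and $(1-c)R(1-c)$ is a $k_2$-Peirce ring for some $1 \leq k_2 \leq |J_2|$. The definition of an $n$-Peirce ring then exhibits $R$ as a $(k_1+k_2)$-Peirce ring with $k_1+k_2 \leq n$.

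The one point requiring a little care is the verification in part (1) that $f_iRf_i$ (rather than $f_i(eRe)f_i$) is the $1$-Peirce ring to which the inductive hypothesis refers; this is immediate from $f_i = ef_i = f_ie$ so that $f_iRf_i = f_i(eRe)f_i$. In part (2), the only subtlety is ensuring both $|J_1|$ and $|J_2|$ are strictly less than $n$ so that strong induction is legitimate, which is precisely guaranteed by $c\ne 0,1$ via Corollary 3.6. Beyond these bookkeeping points, the proof is an assembly of Corollary 1.6, Lemma 1.8, Theorem 2.2, and Corollary 3.6, and no new idempotent computations should be needed.
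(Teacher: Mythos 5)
Your proposal is correct and follows essentially the same route as the paper's own proof: for part (1), induction anchored at $n=2$, lifting the corner decompositions into $\mathfrak{P}_{\rm it}(R)$ via Lemma 1.8 and invoking Theorem 2.2; for part (2), strong induction driven by Corollary 3.6 with the count $|J_1|+|J_2|=n$ and $|J_1|,|J_2|\geq 1$. The only (cosmetic) difference is that the paper explicitly treats the case $m=1$ or $n-m=1$ in part (1), where the corner is already 1-Peirce and the inductive hypothesis as stated for $n>1$ does not literally apply; your phrase ``apply the inductive hypothesis to each corner'' silently subsumes this trivial case, which is harmless once one notes that a 1-Peirce corner $eRe$ admits the singleton decomposition $\{e\}$ with $e\in\mathfrak{P}_{\rm it}(eRe)$.
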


\begin{proof} (1) We use strong induction on $n$. First, let $R$ be a 2-Peirce ring. Then there is an $e \in \mathfrak{P}_{\rm t}(R)$ such that $eRe$ and $(1-e)R(1-e)$ are 1-Peirce rings, and $\{e,1-e\}$ is a complete set of orthogonal idempotents, with $e, 1-e\in \mathfrak{P}_{\rm it}(R)$.
	
	Next, consider a fixed $n \geq 2$ and assume that for each $k, \ 2 \leq k \leq n$, if $R$ is a $k$-Peirce ring, then there is a complete set of orthogonal idempotents $\{e_i\}^k_{i=1} \subseteq \mathfrak{P}_{\rm it}(R)$ such that every $e_iRe_i$ is a 1-Peirce ring. Now let $R$ be an $(n+1)$-Peirce ring. Then there is a $c \in \mathfrak{P}_{\rm t}(R)$ such that $cRc$ is a $k$-Peirce ring for some $k, 1\leq k < n+1$, and $(1-c)R(1-c)$ is an $(n+1-k)$-Peirce ring. Since $k \leq n$ and $n+1-k \leq n$, and assuming for the moment that $2 \leq k$ and $2 \leq n +1-k$, the induction hypothesis guarantees the existence of complete sets of  orthogonal idempotents $\{e_i\}^k_{i=1} \subseteq \mathfrak{P}_{\rm it}(cRc)$ and $\{f_j\}^{n+1-k}_{j=1} \subseteq \mathfrak{P}_{\rm it} ((1-c)R(1-c))$, such that $e_icRce_i$ and $f_j(1-c)R(1-c)f_j$ are 1-Peirce rings for every $i$ and $j$. Since $c, 1-c\in \mathfrak{P}_{\rm it}(R)$, it follows from Lemma 1.8 that $\{e_i\}^k_{i=1}, \{f_j\}^{n+1-k}_{j=1} \subseteq \mathfrak{P}_{\rm it}(R)$. Since $(cRc)((1-c)R(1-c))=0$, we conclude that $\{e_i\}^k_{i=1} \cup \{f_j\}^{n+1-k}_{j=1}$ is an orthogonal set of idempotents in $\mathfrak{P}_{\rm it}(R)$. Moreover, $\sum_{i=1}^ke_i + \sum_{j=1}^{n+1-k}f_j = c+(1-c)=1$, and $e_i cRce_i = e_iRe_i$ (since $e_i \in cRc)$ and $f_j(1-c)R(1-c)f_j= f_jRf_j$ for every $i$ and~$j$.
	
	Finally, we consider the case $k=1$ or $n+1-k=1$. Without loss of generality, let $k=1$, i.e., $c \in \mathfrak{P}_{\rm it}(R), cRc$ is a 1-Peirce ring and $(1-c)R(1-c)$ is an $n$-Peirce ring. Then we can proceed as in the previous paragraph with $c \in \mathfrak{P}_{\rm it}(R)$ and a complete set of orthogonal idempotents $\{f_j\}^n_{j=1}  \subseteq \mathfrak{P}_{\rm it}((1-c)R(1-c))$, and then the set $\{c, f_1, \ldots, f_n\}$ is a complete set of orthogonal idempotents in $\mathfrak{P}_{\rm it}(R)$ such that $cRc$ and $f_jRf_j$ are 1-Peirce rings for all $j$.
	
	(2) We again use strong induction on $n$. Let $R$ have a complete set of orthogonal idempotents $\{e_1, e_2\}$ such that each $e_iRe_i$ is a 1-Peirce ring. If $R$ is a 1-Peirce ring, then we are done, since $1\leq 2$. Otherwise there is a $c \in \mathfrak{P}_{\rm t}(R)$ such that $c \not\in \{0,1\}$. Hence $1-c\in \mathfrak{P}_{\rm t}(R)$ and $1-c\not\in \{0,1\}$. From Corollary 3.6, $c = ce_ic$ for $i \in \{1,2\}$. Without loss of generality, assume $i =1$. Then, again by Corollary~3.6, $cRc= ce_1cRce_1c, (1-c)R(1-c)=(1-c)e_2(1-c)R(1-c)e_2(1-c)$, and $cRc$ and $(1-c)R(1-c)$ are 1-Peirce rings. Therefore $R$ is a 2-Peirce ring.
	
	Next assume that the result holds for a fixed $n \geq 2$. Let $R$ be a ring having a complete set of orthogonal idempotents $\{e_i\}^{n+1}_{i=1}$ such that each $e_iRe_i$ is a 1-Peirce ring. If $R$ is a 1-Peirce ring, we are done. Otherwise there is a $c \in \mathfrak{P}_{\rm t}(R)$ such that $c \not\in \{0,1\}$. Hence $1-c \in \mathfrak{P}_{\rm t}(R)$ and $1-c \not\in\{0,1\}$. From Corollary 3.6, there exist $J_1, J_2 \subseteq \{1, \ldots, n\}$ and complete sets of orthogonal idempotents $\{ce_i c \ \vert \ i \in  J_1\}$ and $\{(1-c)e_i(1-c) \ \vert \  i \in J_2\}$ for $cRc$ and $(1-c)R(1-c)$, respectively. From the induction hypothesis, there exist positive integers $k_1$ and $k_2$ such that $1 \leq k_1 \leq |J_1|$ and $1\leq k_2 \leq |J_2|$ such that $cRc$ is a $k_1$-Peirce ring and $(1-c)R(1-c)$ is a $k_2$-Peirce ring. Since $|J_1| + |J_2| =n+1$, then $R$ is $k$-Peirce where $k=k_1+k_2$ and $1 \leq k \leq n+1$.
\end{proof}

\begin{corollary}
Let $\{e_i\}^n_{i=1}\subseteq \mathfrak{P}_{\rm it}(R)$ be a complete set of orthogonal idempotents such that each $e_iRe_i$ is a $k_i$-Peirce ring for some positive integers $k_i$. Then $R$ is a $k$-Peirce ring for some $1\le k\le \sum_{i=1}^nk_i.$
\end{corollary}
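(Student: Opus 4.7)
My plan is to reduce to Theorem 3.7(2) by refining each piece $e_iRe_i$ through Theorem 3.7(1), then reassembling the refined idempotents into a single complete orthogonal set of 1-Peirce diagonal idempotents of $R$.

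First, I would handle the trivial case: if $\sum_{i=1}^n k_i = 1$, then $n=1$ and $k_1=1$, so $R = e_1Re_1$ is already 1-Peirce and the conclusion holds with $k=1$. So assume $\sum k_i \geq 2$.

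Next, for each $i$, I split into cases on $k_i$. If $k_i = 1$, set $f_{i1} = e_i$, so that $f_{i1}Rf_{i1} = e_iRe_i$ is 1-Peirce by hypothesis. If $k_i \geq 2$, apply Theorem 3.7(1) to the $k_i$-Peirce ring $e_iRe_i$ to obtain a complete set $\{f_{ij}\}_{j=1}^{k_i}$ of orthogonal idempotents of $e_iRe_i$, contained in $\mathfrak{P}_{\rm it}(e_iRe_i)$, such that each $f_{ij}(e_iRe_i)f_{ij}$ is 1-Peirce. Since $f_{ij} = e_if_{ij}e_i$, we have $f_{ij}Rf_{ij} = f_{ij}(e_iRe_i)f_{ij}$, so each $f_{ij}Rf_{ij}$ is 1-Peirce. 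Moreover, by Lemma 1.8(1), $\mathfrak{P}_{\rm it}(e_iRe_i) = e_iRe_i \cap \mathfrak{P}_{\rm it}(R)$ (using $e_i \in \mathfrak{P}_{\rm it}(R)$), so each $f_{ij} \in \mathfrak{P}_{\rm it}(R)$.

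Then I would verify that the combined family $\{f_{ij} \mid 1 \le i \le n, \ 1 \le j \le k_i\}$ is a complete set of orthogonal idempotents in $R$. Orthogonality within a fixed $i$ is given by step two; orthogonality across different indices $i \neq \ell$ follows from $f_{ij}f_{\ell m} = (e_if_{ij}e_i)(e_\ell f_{\ell m}e_\ell) = 0$ since $e_ie_\ell = 0$. The completeness follows from
$$\sum_{i=1}^n \sum_{j=1}^{k_i} f_{ij} \;=\; \sum_{i=1}^n e_i \;=\; 1.$$
Thus this family is a complete set of $m := \sum_{i=1}^n k_i$ orthogonal idempotents of $R$, all lying in $\mathfrak{P}_{\rm it}(R)$, and each $f_{ij}Rf_{ij}$ is a 1-Peirce ring.

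Finally, since $m \geq 2$, I would apply Theorem 3.7(2) directly to this complete set of orthogonal idempotents to conclude that $R$ is a $k$-Peirce ring for some $1 \leq k \leq m = \sum_{i=1}^n k_i$. There is no real obstacle here; the only mild subtlety is the bookkeeping for the $k_i = 1$ case (where Theorem 3.7(1) does not directly apply, but the conclusion holds trivially by taking $f_{i1} = e_i$), and the trivial $m = 1$ case handled at the outset.
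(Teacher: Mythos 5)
Your proposal is correct and is essentially the paper's own argument: the paper proves this corollary with the single line ``This result follows from Theorem 3.7,'' and your refine-each-corner-via-3.7(1), reassemble, then apply-3.7(2) procedure is exactly the intended deduction. The only remark worth making is that your invocation of Lemma 1.8 to place each $f_{ij}$ in $\mathfrak{P}_{\rm it}(R)$ is harmless but superfluous, since Theorem 3.7(2) as stated requires only a complete set of orthogonal idempotents with 1-Peirce corners, not their inner Peirce triviality.
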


\begin{proof}
This result follows from Theorem 3.7.
\end{proof}

From Theorem 3.7, $R$ is an $n$-Peirce ($n > 1$) generalized matrix ring implies that $R\in T_n$; and if $R$ has a complete set
     of $n$ orthogonal primitive idempotents, then $R$ is $k$-Peirce for some $k$ with $1 \le k \le n$.  Thus it is natural to ask:  If $R$ has a complete set of orthogonal 
     primitive idempotents $\{e_i\}_{i=1}^n \subseteq \mathfrak{P}_{\rm it}(R)$ (hence $R^\pi\in \mathcal{T}_n$), must $R$ be $n$-Peirce? Observe that for n = 2, the question has an affirmative answer.  Our next example provides a negative answer, in general.

\begin{example}
Let
$$R = \left[ \begin{array}{ccc} A & X^2 & X\\
X & A & X^2 \\
X^2 & X & A \end{array}\right],$$
where $A$ is a ring such that $0$ and $1$ are the only idempotents of $A$; and $0 \neq X \trianglelefteq A$ such that $X \neq X^2, X^2 \neq X^3$, and $X^3 =0$. Then $R \in \mathcal{T}_3$, but $R$ is a 1-Peirce ring. One can construct such rings by letting $B$ be a commutative ring, $0 \neq P$ a prime ideal of $B$ such that $P \neq P^2$ and $P^2 \neq P^3$. Then take $A = B / P^3$ and $X = P/P^3$. In particular, let $B = F[x]$ where $F$ is a field and $P = xF[x]$. 

Since $X^2 X = XX^2=0$, Corollary 2.3 yields $R \in \mathcal{T}_3$. To show that $R$ is a 1-Peirce ring, we first characterize all nontrivial idempotents of $R$. Let $\alpha \in R$ such that $\alpha \neq 0$ and $\alpha \neq 1$. Then $\alpha = \alpha^2$ if and only if
$$\alpha = \left[ \begin{array}{ccc} e_1 & m_{12} & m_{13}\\
m_{21} & e_2 & m_{23}\\
m_{31} & m_{32} & e_3 \end{array}\right]$$
where $m_{12}, m_{23}, m_{31} \in X^2, \ m_{13}, m_{21}, m_{32} \in X, e_i \in \{0,1\}$, and the following equations are satisfied:

$$\begin{array}{lll}
e_1m_{12} + m_{12} e_2 + m_{13} m_{32} & =& m_{12}\\
e_1 m_{13} + m_{13} e_3 & =& m_{13}\\
m_{21} e_1 + e_2m_{21} & =& m_{21}\\
e_2m_{23} + m_{23}e_3 + m_{21} m_{13} & =& m_{23}\\
m_{31} e_1 + e_3m_{31} + m_{32} m_{21} & =& m_{31}\\
m_{32} e_2 + e_3m_{23} & =& m_{32}. \end{array}$$
\end{example}

From the above conditions $\alpha$ must have one of the following six forms:
\begin{enumerate}
	\item [(i)] $\left[ \begin{array}{ccc} 1& m_{12} & m_{13}\\
	m_{21} & 0 & m_{23}\\
	m_{31} & 0 & 0 \end{array}\right]$ with $m_{21} m_{13} = m_{23}$;
	
	\vskip .4cm
	
	\item[(ii)] $\left[ \begin{array}{ccc} 0 & m_{12} & 0 \\
	m_{21} & 1 & m_{23}\\
	m_{31} & m_{32} & 0 \end{array}\right]$ with $m_{32} m_{21} = m_{31}$;
	
	\vskip .4cm
	
	\item[(iii)] $\left[ \begin{array}{ccc} 0 & m_{12} & m_{13} \\
	0 & 0 & m_{23} \\
	m_{31} & m_{32} & 1 \end{array}\right]$ with $m_{13} m_{32} = m_{12}$;
	
	\vskip .4cm
	
	\item[(iv)] $\left[ \begin{array}{ccc} 1 & m_{12} & m_{13}\\
	0  & 1 & m_{23} \\
	m_{31} & m_{32}  & 0 \end{array}\right]$ with $m_{13} m_{32} = -m_{12}$;
	
	\vskip .4cm
	
	\item[(v)] $\left[ \begin{array}{ccc} 1 & m_{12} & 0 \\
	m_{21} & 0 & m_{23} \\
	m_{31} & m_{32} & 1\end{array}\right]$ with $m_{32} m_{21} = -m_{31}$;
	
	\vskip .4cm
	
	\item[(vi)] $\left[ \begin{array}{ccc} 0 & m_{12} & m_{13} \\
	m_{21} & 1 & m_{23}\\
	m_{31} & 0 & 1 \end{array}\right]$ with $m_{21} m_{13} = -m_{23}$.
\end{enumerate}

Now assume $R$ is not a 1-Peirce ring. Then there exist $c, 1-c \in \mathfrak{P}_{\rm t}(R)$ such that $0 \neq c$ and $c \neq 1$.

Then either $c$ has a form of type (i), (ii), or (iii); or $1-c$ has such a form. Without loss of generality, assume $c$ has a form of type (i), (ii) or (iii). Then $1-c$ has a form of type (iv), (v), or (vi). We show that no matrix of type (iv), (v) or~(vi) is in $\mathfrak{P}_{\rm ot}(R)$. Hence $1-c \not\in \mathfrak{P}_{\rm t}(R)$, a contradition. Therefore $R$ is a 1-Peirce ring.

Since $X^2\neq 0$, there exist $x,y \in X$ such that $0 \neq xy$. Observe:

\smallskip

$\left[ \begin{array}{ccc}
0 & m_{12} & m_{13}\\
m_{21} & 1 & m_{23}\\
m_{31} & 0 & 1 \end{array}\right] xE_{21}  \left[ \begin{array}{ccc} 1 & -m_{12} & -m_{13}\\
-m_{21} & 0 & -m_{23}\\
-m_{31} & 0 & 0 \end{array}\right]yE_{13}  \left[ \begin{array}{ccc} 0 & m_{12} & m_{13}\\
m_{21} & 1 & m_{23}\\
m_{31} & 0 & 1\end{array}\right]$

\vskip .2cm

$= xyE_{23} \neq 0$;

\vskip .4cm

$\left[ \begin{array}{ccc} 1 & m_{12} & 0 \\
m_{21} & 0 & m_{23} \\
m_{31} & m_{32} & 1 \end{array}\right] xE_{32}\left[ \begin{array}{ccc} 0 & -m_{12} & 0 \\
-m_{21} & 1 & -m_{23}\\
-m_{31} & -m_{32} & 0 \end{array}\right] yE_{21} \left[ \begin{array}{ccc} 1 & m_{12} & 0 \\
m_{12} &0 & m_{23}\\
m_{13} & m_{32} & 1 \end{array}\right]$

\vskip .2cm

$= xyE_{31}  \neq 0$;

\vskip .4cm

$\left[ \begin{array}{ccc} 1 & m_{12} & m_{13} \\
0 & 1 & m_{23} \\
m_{31} & m_{32} & 0 \end{array}\right] xE_{13} \left[ \begin{array}{ccc} 0 & -m_{12} & -m_{13} \\
0 & 0 & -m_{23}\\
-m_{31} & -m_{32} & 1 \end{array}\right] yE_{32} \left[ \begin{array}{ccc} 1 & m_{12} & m_{13} \\
0 & 1 & m_{23}\\
m_{31} & m_{32} & 0 \end{array}\right]$

\vskip .2cm

$= xyE_{12} \neq 0$.

\begin{lemma}
Let $0 \neq c = c^2 = R$ and $e \in \mathfrak{P}_{\rm it} (cRc)$ such that $e\neq c$. Then $ReR \subsetneq RcR$.
\end{lemma}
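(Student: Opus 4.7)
The plan is to establish the containment $ReR \subseteq RcR$ directly and then rule out equality by showing $c \notin ReR$.

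For the containment, since $e \in \mathfrak{P}_{\rm it}(cRc) \subseteq cRc$ and $c$ is the identity of the subring $cRc$, one has $e = cec$, so $ReR = R(cec)R \subseteq RcR$.

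For strictness I would argue by contradiction. Suppose $c \in ReR$, say $c = \sum_i r_i e s_i$. Using $c = c^3$ together with $e = cec$, I would rewrite this as $c = \sum_i (c r_i c)\, e\, (c s_i c)$; every summand now lies in the subring $S := cRc$. Thus the ideal $SeS$ of $S$ contains the identity $c$ of $S$, i.e., $SeS = S$. Set $f := c - e$; then $f$ is an idempotent of $S$ orthogonal to $e$, and $f \neq 0$ because $e \neq c$. Since $f \in S = SeS$, write $f = \sum_i a_i e b_i$ with $a_i, b_i \in S$, and then replace $a_i, b_i$ by $f a_i,\, b_i f$ (legitimate because $f = fcf$) so as to arrange $e a_i = 0 = b_i e$ for every $i$. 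The inner Peirce trivial identity $e x y e = e x e y e$ (Definition~1.1), applied inside $S$ with $x = b_i$, $y = a_j$, then gives $e b_i a_j e = (e b_i e)(a_j e) = 0$, whence
$$ f = f^2 = \sum_{i,j} a_i\,(e b_i a_j e)\, b_j = 0, $$
contradicting $f \neq 0$.

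The main step is the reduction to the subring $S = cRc$: the hypothesis on $e$ is a statement about its behaviour inside $S$ rather than inside $R$, so one must first recast the assumption $c \in ReR$ as $c \in SeS$ before the inner Peirce trivial identity can be brought to bear. Once that translation and the $f$-absorbing normalization $e a_i = 0 = b_i e$ are in place, the contradiction drops out of the identity $e x y e = e x e y e$ combined with the orthogonality $e f = f e = 0$.
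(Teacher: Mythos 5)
Your proof is correct and takes essentially the same route as the paper's: both argue by contradiction from $c=\sum r_i e s_i$, form the nonzero idempotent $f=c-e$ orthogonal to $e$, express $f$ as a sum of terms $a_i e b_i$, and then compute $f=f^2=0$ because inner Peirce triviality annihilates every cross term $e b_i a_j e$. Your preliminary reduction to the corner ring $S=cRc$ (and the two-sided normalization by $f$) is only a minor refinement; it makes explicit the insertion of $c$'s that the paper leaves implicit when it applies the $\mathfrak{P}_{\rm it}(cRc)$ identity to coefficients $r_i, s_j$ lying in $R$ rather than in $cRc$.
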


\begin{proof}
Observe that $\{e, c-e\}$ is set of orthogonal idempotents. Clearly, $ReR \subseteq RcR$. Assume that $ReR = RcR$. Then $c = \sum r_ies_i$. So $c-e = (\sum r_i e s_i)-e$. Then
$$\begin{array}{lll}
 c-e = (c-e)^2  & = &[(\sum r_i es_i) -e] (c-e)
 \\
 & = & (\sum r_i es_i)(c-e) 
\\
& = & \sum r_i e s_i(c-e)
 \\
 &= & (\sum r_i es_i(c-e))^2
\\
 & = & \sum_j \sum_i r_i es_i(c-e)r_jes_j(c-e)
 \\
 & =& 0, \end{array}$$
\noindent since 
$$\begin{array}{lll} r_i[es_i(c-e)r_j e]s_j(c-e) & = & r_i [es_ie(c-e)r_je]s_j(c-e)
\\ 
& = & r_i 0 s_j(c-e)
\\
& = &0, \end{array}$$
\noindent because $e \in \mathfrak{P}_{\rm it}(cRc)$. However, this is a contradiction, since $e \neq c$. Therefore $ReR \subsetneq RcR$.
\end{proof}

\begin{theorem}
Assume $R$ has DCC on $\{ReR \ \vert \ e \in \mathfrak{P}_{\rm t}(R)\}$. Then $R$ is an $n$-Peirce ring for some $n \in \Z^+$.
\end{theorem}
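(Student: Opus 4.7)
I would argue by contradiction: suppose $R$ is not an $n$-Peirce ring for any $n \in \Z^+$, and build an infinite strictly descending chain $R = Rc_0R \supsetneq Rc_1R \supsetneq \cdots$ in $\{ReR \mid e \in \mathfrak{P}_{\rm t}(R)\}$, contradicting DCC. The idempotents $c_k$ would be constructed inductively, carrying along three invariants: (i) $c_k \in \mathfrak{P}_{\rm t}(R)$, (ii) $c_k \le c_{k-1}$ in the idempotent order (i.e., $c_k = c_{k-1}c_k = c_k c_{k-1}$), and (iii) $c_kRc_k$ is not $m$-Peirce for any $m \in \Z^+$. The base case $c_0 = 1$ is immediate; invariant (iii) at $k=0$ is precisely the working assumption.

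For the inductive step, invariant (iii) at $k-1$ forces $c_{k-1}Rc_{k-1}$ to fail to be 1-Peirce, so there exists $f \in \mathfrak{P}_{\rm t}(c_{k-1}Rc_{k-1}) \setminus \{0, c_{k-1}\}$. Applying Corollary~3.8 to the complete orthogonal pair $\{f, c_{k-1}-f\}$ inside $c_{k-1}Rc_{k-1}$: if both sub-corners $f(c_{k-1}Rc_{k-1})f$ and $(c_{k-1}-f)(c_{k-1}Rc_{k-1})(c_{k-1}-f)$ were $m_i$-Peirce, then $c_{k-1}Rc_{k-1}$ itself would be $m$-Peirce, contradicting the invariant. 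Hence at least one sub-corner is not $m$-Peirce for any $m$; after renaming $f \leftrightarrow c_{k-1}-f$ if necessary, assume $f(c_{k-1}Rc_{k-1})f$ is the bad one, and set $c_k := f$. Invariants (ii) and (iii) are then automatic, and the strict inclusion $Rc_kR \subsetneq Rc_{k-1}R$ (which produces the descending chain) is exactly Lemma~3.10 applied with ambient ring $R$, idempotent $c = c_{k-1}$ and refinement $e = c_k$; this is valid since $c_k \in \mathfrak{P}_{\rm t}(c_{k-1}Rc_{k-1}) \subseteq \mathfrak{P}_{\rm it}(c_{k-1}Rc_{k-1})$ with $c_k \neq c_{k-1}$.

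The hard part is invariant (i), that $c_k$ retains Peirce triviality in the whole ring $R$ rather than only in the corner. Inner Peirce triviality transfers immediately by Lemma~1.8(1), using the inductive hypothesis $c_{k-1} \in \mathfrak{P}_{\rm it}(R)$. For the outer condition $(1-c_k)Rc_kR(1-c_k) = 0$, write $1-c_k = (c_{k-1}-c_k) + (1-c_{k-1})$ and expand into four pieces. The two ``diagonal'' pieces vanish readily: $(c_{k-1}-c_k)Rc_kR(c_{k-1}-c_k)$ reduces, by absorbing $c_{k-1}$ factors into $cRc$-elements, to a product annihilated by outer Peirce triviality of $c_k$ in $c_{k-1}Rc_{k-1}$, while $(1-c_{k-1})Rc_kR(1-c_{k-1}) \subseteq (1-c_{k-1})Rc_{k-1}R(1-c_{k-1}) = 0$ by outer Peirce triviality of $c_{k-1}$ in $R$ (using $c_k \in c_{k-1}Rc_{k-1}$). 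The two ``mixed'' pieces $(c_{k-1}-c_k)Rc_kR(1-c_{k-1})$ and $(1-c_{k-1})Rc_kR(c_{k-1}-c_k)$ form the technical crux; I would attack them by combining $c_k \in c_{k-1}Rc_{k-1}$ with the square-zero property of the off-diagonal ideal $c_{k-1}R(1-c_{k-1}) + (1-c_{k-1})Rc_{k-1}$ supplied by Proposition~2.4(2) for the pair $\{c_{k-1}, 1-c_{k-1}\} \subseteq \mathfrak{P}_{\rm t}(R)$, potentially supplemented by refining the choice of $f$ within $\mathfrak{P}_{\rm t}(c_{k-1}Rc_{k-1})$ so as to force these mixed pieces to vanish. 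Successful verification closes the induction and yields the forbidden infinite descending chain, finishing the proof.
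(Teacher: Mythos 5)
Your skeleton---argue by contradiction, choose nested nontrivial idempotents $c_k \in \mathfrak{P}_{\rm t}(c_{k-1}Rc_{k-1})$ with $c_{k}Rc_{k}$ never Peirce, and invoke Lemma 3.10 to get $Rc_kR \subsetneq Rc_{k-1}R$---is exactly the paper's argument. The genuine gap is your invariant (i), and it is not merely an unfinished technicality: it is false in general, and the paper's own Example 1.9 is the counterexample. There, $R$ is the $3$-by-$3$ upper triangular matrix ring over $A$, $c = E_{22}+E_{33} \in \mathfrak{P}_{\rm t}(R)$, and $f = E_{22} \in \mathfrak{P}_{\rm t}(cRc)$, yet $f \notin \mathfrak{P}_{\rm ot}(R)$; concretely, your ``mixed piece'' $(1-c)RfR(c-f) = E_{11}RE_{22}RE_{33}$ contains $E_{12}E_{22}E_{23} = E_{13} \neq 0$. (Your two diagonal pieces do vanish, exactly as you argue, but that is all one can get.) So no absorption of $c_{k-1}$-factors will kill the mixed pieces, and the hoped-for ``refinement of the choice of $f$'' has no support: nothing in the hypotheses supplies a Peirce trivial idempotent of the corner that is outer Peirce trivial in the ambient ring. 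As written, your induction cannot close.

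The paper never attempts invariant (i). It keeps only \emph{inner} Peirce triviality: since $c_k \in \mathfrak{P}_{\rm t}(c_{k-1}Rc_{k-1}) \subseteq \mathfrak{P}_{\rm it}(c_{k-1}Rc_{k-1})$ and $c_{k-1} \in \mathfrak{P}_{\rm it}(R)$, Lemma 1.8(1) gives $c_k \in \mathfrak{P}_{\rm it}(R)$ (inner triviality, unlike full Peirce triviality, does pass up from such a corner to $R$). Lemma 3.10 needs only $c_k \in \mathfrak{P}_{\rm it}(c_{k-1}Rc_{k-1})$ with $c_k \neq c_{k-1}$, so the strictly descending chain $R \supsetneq Rc_1R \supsetneq Rc_2R \supsetneq \cdots$ still materializes, and the paper concludes by contradicting DCC on $\{ReR \mid e \in \mathfrak{P}_{\rm it}(R)\}$. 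Note that your worry was pointing at a real discrepancy in the paper itself: the $c_k$ for $k \geq 2$ are only known to be inner Peirce trivial in $R$, so the chain lives a priori in the larger family $\{ReR \mid e \in \mathfrak{P}_{\rm it}(R)\}$ rather than in the family $\{ReR \mid e \in \mathfrak{P}_{\rm t}(R)\}$ named in the statement---and indeed the paper's closing sentence invokes DCC on the $\mathfrak{P}_{\rm it}$ family. But the cure cannot be your invariant (i), since Example 1.9 kills it; the honest repairs are either to read the DCC hypothesis over $\mathfrak{P}_{\rm it}(R)$, or to show each $Rc_kR$ equals $ReR$ for some $e \in \mathfrak{P}_{\rm t}(R)$, and neither you nor the paper supplies the latter.
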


\begin{proof}
Assume $R$ has DCC on $\{ReR \ \vert \  \ e \subseteq \mathfrak{P}_{\rm t}(R)\}$, but $R$ is a not an $n$-Peirce ring for any $n\in \Z^+$. Observe that $\mathfrak{P}_{\rm t}(R) \neq \{0,1\}$. So let $0 \neq c_1 \in \mathfrak{P}_{\rm t}(R)$ be such that $c_1 \neq 1$. Then $c_1Rc_1$ is not an $n$-Peirce ring for any $n \in \Z^+$, or $(1-c_1)R(1-c_1)$ is not an $n$-Peirce ring for any $n \in \Z^+$. Without loss of generality, say $c_1Rc_1$ is not an $n$-Peirce ring for any $n \in \Z^+$. Then $\mathfrak{P}_{\rm t}(c_1Rc_1) \neq \{0,c_1\}$. So let $0 \neq c_2\in \mathfrak{P}_{\rm t}(c_1Rc_1)$ be such that $c_2 \neq c_1$. Then  $c_2Rc_2$ is not an $n$-Peirce ring for any $n \in \Z^+$, or $(c_1- c_2)R(c_1-c_2)$ is not an $n$-Peirce ring for any $n \in \Z^+$. Without loss of generality, say $c_2Rc_2$ is not an $n$-Peirce ring for any $n \in \Z^+$. By Lemma~1.8, $c_1, c_2 \in \mathfrak{P}_{\rm it}(R)$. From Lemma 3.10, $R \supsetneq Rc_1R\subsetneq Rc_2R$. We can continue this process indefinitely, which contradicts the DCC on $\{R e R \ \vert \  e \in \mathfrak{P}_{\rm it}(R)\}$. Therefore $R$ is an $n$-Peirce ring for some $n \in \Z^+$.
\end{proof}

\begin{proposition}
If $\{b_1, \ldots, b_n\} \subseteq \mathfrak{P}_{\rm it}(R)$ is a complete set of nonzero orthogonal  idempotents such that $\mathfrak{P}_{\rm it}(b_iRb_i) = \{0,b_i\}$, then $|\{ReR \ \vert  \ e \in \mathfrak{P}_{\rm it}(R)\} | \leq 2^n$.
\end{proposition}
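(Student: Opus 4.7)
The plan is to exhibit a map $\phi : \mathfrak{P}_{\rm it}(R) \to 2^{\{1,\ldots,n\}}$ such that $ReR$ depends only on $\phi(e)$, whence the set of such ideals has cardinality at most $2^n$. The natural candidate comes from the ``diagonal pieces'' $b_ieb_i$: for each $i$, this element is idempotent by Lemma 1.7(1), lies in $\mathfrak{P}_{\rm it}(R)$ by Lemma 1.7(3), and therefore belongs to $\mathfrak{P}_{\rm it}(b_iRb_i) = b_iRb_i \cap \mathfrak{P}_{\rm it}(R)$ by Lemma 1.8(1). The hypothesis $\mathfrak{P}_{\rm it}(b_iRb_i) = \{0, b_i\}$ then forces $b_ieb_i \in \{0, b_i\}$, and I would define $\phi(e) := \{i : b_ieb_i = b_i\}$.

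The core claim is then $ReR = \sum_{i \in \phi(e)} Rb_iR$. The inclusion $\supseteq$ is immediate: when $i \in \phi(e)$, $b_i = b_ieb_i \in ReR$. For the reverse inclusion, I would decompose $e = e \cdot 1 \cdot e = \sum_{j} eb_je$ and show that $eb_je = 0$ whenever $j \notin \phi(e)$. The key observation is that $eb_je$ is idempotent (Lemma 1.7(1), applied to $e \in \mathfrak{P}_{\rm it}(R)$ and the idempotent $b_j$), while a direct expansion gives $(eb_je)^2 = eb_jeb_je = e(b_jeb_j)e$; when $j \notin \phi(e)$ the middle factor $b_jeb_j$ vanishes, so $(eb_je)^2 = 0$ and idempotency forces $eb_je = 0$. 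Consequently $e \in \sum_{j \in \phi(e)} Rb_jR$, giving the remaining inclusion.

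Since every $ReR$ then has the form $\sum_{i \in S} Rb_iR$ for some $S = \phi(e) \subseteq \{1,\ldots,n\}$, and there are $2^n$ such subsets, the bound follows. The main obstacle to spot is the identity $(eb_je)^2 = e(b_jeb_j)e$ combined with the recognition that Lemma 1.7(1) — the key structural consequence of inner Peirce triviality of $e$ — upgrades this to $eb_je = 0$. Everything else is bookkeeping with Lemmas 1.7 and 1.8, but this single collapse of the off-diagonal contributions is precisely what lets the diagonal data $\phi(e)$ determine $ReR$.
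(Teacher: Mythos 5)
Your proposal is correct and takes essentially the same route as the paper's own proof: both use Lemmas 1.7 and 1.8 to force $b_ieb_i\in\{0,b_i\}$, prove $ReR=\sum_{i\in J}Rb_iR$ for $J=\phi(e)=\{i : b_ieb_i=b_i\}$, and then bound the number of such ideals by the number of subsets of $\{1,\ldots,n\}$. The only cosmetic difference is in how the terms $eb_je$ with $j\notin\phi(e)$ are killed — you combine idempotency of $eb_je$ with $(eb_je)^2=e(b_jeb_j)e=0$, while the paper collapses them directly through the inner Peirce triviality relation; this is the same underlying computation, and your handling of the reverse inclusion ($b_i=b_ieb_i\in ReR$) is in fact a cleaner statement of what the paper does.
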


\begin{proof}
Let $0 \neq e = e^2 \in \mathfrak{P}_{\rm it}(R)$. By Lemma 1.8, each $b_i eb_i \in \mathfrak{P}_{\rm it}(R)$. Observe that $e = \left(\sum_{i=1}^nb_i\right) e \left(\sum_{i=1}^n b_i\right) = \sum_{i=1}^n \sum_{j=1}^n b_ieb_j$, and $b_i e b_i \neq 0$ if and only if $b_ieb_i = b_i$. Let $J \subseteq \{1, \ldots, n\}$ such that $b_i e b_i \neq 0$ if and only if $i \in J$. Since $e \in \mathfrak{P}_{\rm it}(R), e = e(e)e = e \left( \sum_{i=1}^n \sum_{j=1}^n b_i e b_j\right) e = \sum_{i=1}^n\sum_{j=1}^n eb_ieb_je = \sum_{i=1}^n \sum_{j=1} eb_ib_je= \sum_{i\in J} e b_i e$. Then $ReR \subseteq \sum_{i\in J} Rb_iR$.

Observe that $e- \sum_{i\neq j} b_i eb_j = \sum_{i\in J} b_ieb_i = \sum_{i\in J}b_i$. Let $k \in J$, then $b_ke-b_keb_j = b_k$. Hence $Rb_kR \subseteq ReR$. So $\sum_{i\in J}Rb_iR \subseteq ReR$. Therefore $ReR = \sum_{i \in J} Rb_iR$. Since $J \subseteq \{1, \ldots, n\}, \ |\{ReR \ \vert \  e \in \mathfrak{P}_{\rm it}(R)\}|\leq |\{\sum_{i\in K} Rb_iR \ \vert \  K \subseteq \{1, \ldots, n\}\}| =| \{K \ \vert \ K\subseteq \{1, \ldots, n\}\}| = 2^n$, where $\sum_{i\in K} R b_i R$ corresponds to~$\{0\}$ when $K=\emptyset$. 
\end{proof}

As an illustration and application of several of our previous results, we provide the following lemma and proposition.  Recall that a ring R is ''quasi-Baer'' if for each $X \trianglelefteq R$ there is an $e = e^2 \in R$ such that $\ul{r}_R(X) = eR$.  See [BPR2] and [C] for further details on the class of quasi-Baer rings.

\begin{lemma} $R$ is a prime ring if and only if $R$ is quasi-Baer and a 1-Peirce ring.\end{lemma}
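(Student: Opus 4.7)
The plan is to handle the two directions separately, with the reverse direction being where the quasi-Baer hypothesis does the real work.

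For the forward direction, I would first observe that a prime ring $R$ is automatically quasi-Baer, since for any $X \trianglelefteq R$ the right annihilator $\ul{r}_R(X)$ is either $0 = 0R$ or (when $X = 0$) equals $R = 1R$. For the 1-Peirce conclusion, I would use the fact noted right after Corollary 1.6 that for a semiprime ring $\mathfrak{P}_{\rm t}(R) = \mathcal{B}(R)$. Since prime rings are semiprime and have only trivial central idempotents (if $e \in \mathcal{B}(R)$ then $eR(1-e)R = 0$ forces $e \in \{0,1\}$), we obtain $\mathfrak{P}_{\rm t}(R) = \{0,1\}$, i.e., $R$ is 1-Peirce.

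For the reverse direction, I would assume $R$ is quasi-Baer and 1-Peirce, and suppose for contradiction that $R$ is not prime. Then there exist nonzero ideals $X, Y \trianglelefteq R$ with $XY = 0$, so $Y \subseteq \ul{r}_R(X)$. By the quasi-Baer hypothesis, $\ul{r}_R(X) = eR$ for some idempotent $e \in R$. The key step is to show that this $e$ lies in $\mathcal{S}_\ell(R)$. For this I would note that because $X$ is a two-sided ideal, $\ul{r}_R(X)$ is itself a two-sided ideal: for any $r \in R$ and $y \in \ul{r}_R(X)$, we have $X(ry) = (Xr)y \subseteq Xy = 0$, so $ry \in \ul{r}_R(X)$. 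Hence $Re \subseteq eR$, which means $re = ere$ for every $r \in R$, i.e., $e \in \mathcal{S}_\ell(R)$.

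Now I would invoke Proposition~1.10(1) to conclude $e \in \mathfrak{P}_{\rm t}(R) = \{0,1\}$. If $e = 0$, then $Y \subseteq eR = 0$, contradicting $Y \neq 0$. If $e = 1$, then $\ul{r}_R(X) = R$, forcing $X = X \cdot 1 = 0$, contradicting $X \neq 0$. Either way we reach a contradiction, so $R$ must be prime. The main obstacle is the two-sided nature of the annihilator idempotent: once one sees that the quasi-Baer idempotent associated with a two-sided ideal is automatically left semicentral, the 1-Peirce hypothesis finishes the job cleanly via Proposition~1.10.
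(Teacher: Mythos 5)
Your proof is correct and follows essentially the same route as the paper's: the forward direction uses semiprimeness to identify $\mathfrak{P}_{\rm t}(R)$ with $\mathcal{B}(R)=\{0,1\}$, and the converse uses the quasi-Baer idempotent $e$ with $\ul{r}_R(X)=eR$, the observation that the annihilator of an ideal is an ideal so $e\in\mathcal{S}_\ell(R)$, and Proposition~1.10(1) combined with the 1-Peirce hypothesis to force $e\in\{0,1\}$. The only cosmetic differences are that you verify prime $\Rightarrow$ quasi-Baer directly rather than citing [BHKP, Lemma~4.2], and you phrase primeness via ideals $XY=0$ where the paper works with elements $xRy=0$; you also make explicit the elimination of the case $e=1$, which the paper leaves implicit.
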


\begin{proof} Assume $R$ is prime.  From [BHKP, Lemma 4.2] or [BPR2, Proposition~3.2.5], $R$ is quasi-Baer.  From Corollary 1.6, $R$ is a 1-Peirce ring.  Conversely, assume $xRy = 0$ for some $x, y \in R$ with $x\ne0.$  Then $y\in \ul{r}_R(xR) = \ul{r}_R(RxR) = eR$ for some $e = e^2 \in R$.  Since $\ul{r}_R(xR)$ is an ideal of $R$, then $e\in \mathcal{S}_{\ell}(R)$.  By Proposition~1.10(1), $e = 0$.  Hence $y = 0$, so R is prime.\end{proof}      

\begin{proposition}  Assume that $R$ is a quasi-Baer ring.  If $\{e_1, \ldots, e_n\}$ is a complete set of orthogonal inner Peirce trivial idempotents and each $e_iRe_i$ is a 1-Peirce ring, then $R$ is a $k$-Peirce ring for some $1\le k\le n$, $R^\pi \in \mathcal{T}_n$ and each $e_iRe_i$ is a prime ring.\end{proposition}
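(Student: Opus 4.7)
The plan is to dispatch the three conclusions separately, in order of increasing difficulty, and to see most of the work reduced to invoking results already established in the paper.

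First, the assertion $R^\pi\in\mathcal{T}_n$ is immediate from Theorem~2.2: the hypothesis gives $\{e_1,\ldots,e_n\}\subseteq\mathfrak{P}_{\rm it}(R)$ and $\{e_1,\ldots,e_n\}$ is a complete set of orthogonal idempotents, which is exactly condition (3) of that theorem.

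Second, the $k$-Peirce conclusion is a direct application of Theorem~3.7(2): when $n\geq 2$, the assumption that each $e_iRe_i$ is a 1-Peirce ring yields that $R$ is $k$-Peirce for some $1\leq k\leq n$; the case $n=1$ is trivial, since then $e_1=1$ and $R=e_1Re_1$ is already 1-Peirce.

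Third, to show each $e_iRe_i$ is prime, I would combine Lemma~3.13 with the fact that the quasi-Baer property is preserved by corners. Once one knows that $e_iRe_i$ is quasi-Baer, Lemma~3.13 applied to $e_iRe_i$ (which is 1-Peirce by hypothesis) delivers primeness. The preservation statement is that for any $e=e^2$ in a quasi-Baer ring $R$, the corner $eRe$ is quasi-Baer: given $X\trianglelefteq eRe$ one picks $c=c^2\in R$ with $\ul{r}_R(RXR)=cR$ and checks that $ece\in eRe$ is the idempotent witnessing $\ul{r}_{eRe}(X)=ece\cdot eRe$. This is a standard fact in the theory of quasi-Baer rings, and I would simply cite it from [BPR2] rather than reproduce the calculation.

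The main obstacle is this corner-preservation step, since it is the only ingredient not proved within the excerpt. If a citation to [BPR2] is acceptable, the proof is essentially a three-line assembly of Theorem~2.2, Theorem~3.7(2), and Lemma~3.13; if an in-line justification is required, the verification that $ece$ serves as the required idempotent in $eRe$ is the one piece that demands an actual computation.
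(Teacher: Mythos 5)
Your proposal is correct and follows essentially the same route as the paper: the paper's proof is exactly the assembly of Theorem~2.2, Theorem~3.7(2), and Lemma~3.13, together with a citation of [C, Lemma~2] for the fact that the quasi-Baer property passes to corner rings $eRe$ --- which is precisely your ``corner-preservation'' step (cited there from Clark rather than [BPR2], but the same standard fact, provable by the $ece$ computation you sketch).
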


\begin{proof} This proof follows from Theorem 3.7(2), Theorem 2.2, Lemma 3.13, and [C, Lemma 2].\end{proof}

For example, any quasi-Baer ring with a complete set of orthogonal primitive idempotents (e.g., a right hereditary right Noetherian ring) satisfies the hypothesis of Proposition 3.14.

  In a sequel to this paper, we further investigate the properties and structure of the class of n-Peirce rings.”

\vskip .4cm

\noindent {\bf Acknowledgements:}

\smallskip 

\noindent 1. Proposition 2.8 was suggested by Arturo Magidin.

\noindent 2. Our attention was drawn to [P] by Lance Small.

\noindent 3. The first author was supported by the Hungarian National Foundation for Scientific Research under
Grants no.~K-101515 and Institute of Mathematics, Hanoi, Vietnam. 

\noindent 4. The third author was supported by the
National Research Foundation (of South Africa) under grant no.~UID 72375. Any
opinion, findings and conclusions or recommendations expressed in this
material are those of the authors and therefore the National Research
Foundation does not accept any liability in regard thereto.

\section*{References}
\begin{enumerate}
\item[{[ABP]}] E.P. Armendariz, G.F. Birkenmeier and J.K. Park, Ideal intrinsic extensions with connections to PI-rings, J. Pure Appl.~Algebra 213 (2009), 1756-1776. Corrigendum 215 (2011), 99-100.
\item[{[AvW1]}]  P.N. \'Anh and L. van Wyk, Automorphism groups of generalized triangular matrix rings, Linear Algebra Appl.~434 (2011), 1018-1025.
\item[{[AvW2]}] P.N. \'Anh and L. van Wyk, Isomorphisms between strongly triangular matrix rings, Linear Algebra Appl.~438 (2013), 4374-4381.
\item[{[BHKP]}] G.F. Birkenmeier, H.E. Heatherly, J.Y. Kim and J.K. Park, Triangular matrix representations, 
J. Algebra 230 (2000), 558-595.
\item[{[BMR]}] G.F. Birkenmeier, B.J. M\"{u}ller and S.T. Rizvi, Modules in which every fully invariant submodule is essential in a direct summand, Comm.~Algebra 30 (2002), 1395-1415.
\item[{[BPR1]}] G.F. Birkenmeier, J.K. Park and S.T. Rizvi, Hulls of semiprime rings with applications to $C^\ast$-algebras, J. Algebra 322 (2009), 327-352.
\item[{[BPR2]}] G. Birkenmeier, J.K. Park, S.T. Rizvi, Extensions of Rings and Modules, Birkh\"auser/Springer, New York, 2013.
\item[{[C]}] W.E. Clark, Twisted matrix units semigroup algebras, Duke Math.~J.~34 (1967), 417-423.
\item[{[CR]}] F. Cedo and L.H. Rowen, Addendum to ``Examples of semiperfect rings'', Israel J.~Math.~107 (1998), 343-348.
\item[{[DHSW]}] N.V. Dung, D.V. Huynh, P.F. Smith and R. Wisbauer, Extending Modules, Longman, Harlow, 1994.
\item[{[FS]}] J.W. Fisher and R.L. Snider, On the Von Neumann regularity of rings with regular prime factor rings, Pacific J.~Math.~54 (1974), 135-144.
\item[{[GW]}] B.J. Gardner and R. Wiegandt, Radical Theory of Rings, Marcel Dekker, New York, 2004.
\item[{[K]}] I. Kaplansky, Rings of Operators, Benjamin, New York, 1968.
\item[{[KT]}] P.A. Krylov, A.A. Tuganbaev, Modules over formal matrix rings (Russian), Fundam. Prikl. Mat. 15(8) (2009), 145-211; translation in 
J.~Math.~Sci.~(N. Y.) 171(2) (2010), 248-295.
\item[{[L]}] T.Y. Lam, Lectures on Modules and Rings, Springer, New York, 1999.
\item[{[P]}] B. Peirce, Linear Associative Algebra, American J.~Math. 4 (1881), 97-229.
\item[{[R1]}] L.H. Rowen, Ring Theory I, Academic Press, Boston, 1988.
\item[{[R2]}] L.H. Rowen, Examples of semiperfect rings, Israel J.~Math.~65 (1989), 273-283.
\item[{[S]}] J. Szigeti, Linear algebra in lattices and nilpotent endomorphisms of semisimple modules, J. Algebra 319 (2008), 296-308.
\item[{[TLZ]}] G. Tang, C. Li and Y. Zhou, Study of Morita contexts, Comm.~Algebra 42 (2014), 1668-1681.
\item[{[W]}] R. Wisbauer, Modules and Algebras: Bimodule Structure and Group Actions on Algebras, Addison Wesley Longman, Harlow, 1996.
\end{enumerate}



\end{document}